\documentclass{amsart}
\usepackage[utf8]{inputenc}
\usepackage{amsmath,amssymb,amsthm, graphics, comment,bm}
\usepackage{mathtools}
\usepackage{amsthm}
\usepackage{diagmac2}
\usepackage{textcomp}
\usepackage{color}
\usepackage[alphabetic]{amsrefs}
\usepackage[all,cmtip,color,matrix,arrow]{xy}
\usepackage{yfonts}
\usepackage{tikz}
\usepackage{stmaryrd}
\usepackage{mathrsfs}
\usepackage{enumerate}
\usepackage{hyperref}
\usepackage[mathscr]{euscript}

\setlength{\textwidth}{\paperwidth}
\addtolength{\textwidth}{-2in}
\calclayout

\newcommand{\dotr}{\mbox{$\boldsymbol{\cdot}$}}

\newcommand{\Diff}{\operatorname{Diff}}

\newcommand{\bG}{\mathbb{G}}
\newcommand{\bL}{\mathbb{L}}

\newcommand{\bQ}{\mathbb{Q}}

\newcommand{\sC}{\mathscr{C}}
\newcommand{\sD}{\mathscr{D}}

\newcommand{\sG}{\mathscr{G}}
\newcommand{\sH}{\mathscr{H}}

\newcommand{\sK}{\mathscr{K}}
\newcommand{\sL}{\mathscr{L}}

\newcommand{\sN}{\mathscr{N}}
\newcommand{\sO}{\mathscr{O}}

\newcommand{\sS}{\mathscr{S}}

\newcommand{\sX}{\mathscr{X}}
\newcommand{\sZ}{\mathscr{Z}}

\newcommand{\cC}{\mathcal{C}}
\newcommand{\cD}{\mathcal{D}}
\newcommand{\cE}{\mathcal{E}}
\newcommand{\cF}{\mathcal{F}}
\newcommand{\cG}{\mathcal{G}}
\newcommand{\cH}{\mathcal{H}}
\newcommand{\cI}{\mathcal{I}}
\newcommand{\cK}{\mathcal{K}}
\newcommand{\cL}{\mathcal{L}}
\newcommand{\cM}{\mathcal{M}}
\newcommand{\cN}{\mathcal{N}}
\newcommand{\cO}{\mathcal{O}}
\newcommand{\cP}{\mathcal{P}}
\newcommand{\cQ}{\mathcal{Q}}
\newcommand{\cR}{\mathcal{R}}
\newcommand{\cS}{\mathcal{S}}
\newcommand{\cU}{\mathcal{U}}

\newcommand{\cX}{\mathcal{X}}
\newcommand{\cY}{\mathcal{Y}}
\newcommand{\cZ}{\mathcal{Z}}
\newcommand{\Ass}{\operatorname{Ass}}
\newcommand{\E}{\operatorname{Ext}}
\newcommand{\spec}{\operatorname{Spec}}

\newcommand{\supp}{\operatorname{Supp}}

\newcommand{\Hom}{\operatorname{Hom}}

\newcommand{\Div}{\operatorname{Div}}
\newcommand{\Pic}{\operatorname{Pic}}

\newcommand{\Aut}{\operatorname{Aut}}

\newcommand{\Id}{\operatorname{Id}}

\newcommand{\bmu}{\bm{\mu}}
\newtheorem{theorem}{Theorem}[section]
\newtheorem{Teo}[theorem]{Theorem}

\newtheorem{Lemma}[theorem]{Lemma}
\newtheorem{Oss}[theorem]{Observation}
\newtheorem*{Oss'}{Observation}
\newtheorem{Cor}[theorem]{Corollary}
\newtheorem{Prop}[theorem]{Proposition}

\theoremstyle{definition}
\newtheorem{EG}[theorem]{Example}
\newtheorem{Def}[theorem]{Definition}
\newtheorem{Notation}[theorem]{Notation}
\newtheorem{Remark}[theorem]{Remark}

\begin{document}

\title[Twisted stable pairs]{Stable pairs with a twist and gluing morphisms for moduli of surfaces}
\author{Dori Bejleri}
\email{bejleri@math.harvard.edu}
\author{Giovanni Inchiostro}
\email{giovanni@math.brown.edu}

\begin{abstract}We propose an alternative definition for families of stable pairs $(X,D)$ over a possibly non-reduced base when $D$ is reduced, by replacing $(X,D)$ with an appropriate orbifold pair $(\cX,\cD)$. This definition of a stable family ends up being equivalent to previous ones, but has the advantage of being more amenable to the tools of deformation theory. Moreover, adjunction for $(\cX,\cD)$ holds on the nose; there is no correction term coming from the different. This leads to the existence of functorial gluing morphisms for families of stable surfaces and functorial morphisms from $(n + 1)$ dimensional stable pairs to $n$ dimensional polarized orbispace. As an application, we study the deformation theory of some surface pairs. 
\end{abstract}

\maketitle

\section{Introduction}

Since the introduction of the space of stable curves by Deligne and Mumford \cite{DM69}, the theory of modular compactifications of moduli spaces of varieties has held a central role in algebraic geometry. The class of stable pairs $(X,D)$, first introduced in dimension $2$ by Koll\'ar and Shepherd-Barron \cite{KSB}, give a natural generalization of stable curves to higher dimensions. Building on significant advances in the minimal model program, boundedness, properness, and projectivity of the coarse moduli space has been proven for the space of stable pairs \cites{KSB, kol, al1, karu, BCHM, HX, HMX, KP, kol1}. 

One subtle aspect is the notion of a flat family of stable pairs over an arbitrary base. The first difficulty is that the natural polarization $K_X + D$ on a stable pair is only a $\mathbb{Q}$-divisor, so the associated sheaf $\cO_X(K_X + D)$ is not locally free. The second difficulty is that the family of divisors $D$ is not flat when the coefficients are small. Koll\'ar introduced moduli problems to address these difficulties in \cite{Kol08} and \cite{Kol_div19} respectively. However, infinitesimal deformations and obstruction theories for these moduli problems are not well understood in general since the moduli functor does \emph{not} simply parametrize flat families of pairs $(X,D) \to B$ which are fiberwise stable (see e.g. \cite[Chapter 6.1]{kol1} and \cite[Problem 13]{Kol_div19} for discussions).

In this paper we propose an alternative solution to the first difficulty for pairs $(X,D)$ with $D$ reduced, which is more amenable to the tools of deformation and obstruction theory.\footnote{In this setting the family of divisors is automatically flat (see Corollary \ref{Cor:D:is:a:FLAT:family}).} Building upon the work of Abramovich and Hassett \cite{AH}, we replace $(X,D)$ with an associated \emph{twisted stable pair} $(\cX, \cD)$. Admissible families of stable pairs become just flat families of twisted stable pairs with no extra condition.

\subsection{Advantages of twisted stable pairs}
\begin{enumerate} 

\item We can compute admissible deformations and obstructions of stable pairs via usual deformations and obstructions of twisted stable pairs (Corollary \ref{cor:intro:def} and Section \ref{sec:intro:def}).
\item We extend the gluing formalism of \cite[Theorem 5.13]{Kollarsingmmp} to families over arbitrary bases giving \emph{functorial} morphisms that describe the boundary of the moduli of stable surfaces (Theorem \ref{thm:intro3}).
\item Adjunction for twisted stable pairs is well behaved in families and induces a \emph{functorial} morphism from the moduli of twisted stable pairs to the moduli of canonically polarized orbifolds in one dimension lower (Theorem \ref{thm:intro2}). 
\end{enumerate}

\subsection{Moduli of twisted stable pairs}\label{sec:intro:tsp}

One way to address the issue that $\cO_X(K_X + D)$ is \emph{not} a line bundle is to choose an appropriate reflexive power which is a line bundle; but different choices would result in different moduli problems. Therefore Koll\'ar introduced the following, more canonical, condition on a flat family $\pi : (X,D) \to B$: the sheaves $\omega_\pi^{[m]}(mD)$ are flat and commute with base change for \emph{every} $m$ (see \cite{Kol08}). This is \emph{not} a fiberwise condition, but rather a global condition on the family.

In \cite{AH}, Abramovich and Hassett showed that imposing Koll\'ars condition is equivalent to working with an associated orbifold $\cX \to B$. When $D=0$, they constructed a proper Deligne-Mumford stack parametrizing such canonically polarized twisted stable varieties. Our first result is to extend this approach, incorporating a non-zero reduced divisor $D$, and simplifying the construction of the moduli space of stable pairs in this setting:

\begin{theorem}[Theorems \ref{Teo:we:have:alg:stack} and \ref{Teo:the:KSBA:stack:is:proper:DM}]\label{thm:intro1} There exists a proper Deligne-Mumford stack $\cK_{n,v}$ parametrizing $n$-dimensional twisted stable pairs $(\cX, \cD)$ of volume $v$ where $\cX$ is an orbifold with no stabilizers in codimension one, $\cD$ is a reduced divisor, $K_\cX + \cD$ is a Cartier ample divisor, and $(\cX, \cD)$ has semi-log canonical singularities.
\end{theorem}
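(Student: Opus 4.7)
The plan is to reduce the construction of $\cK_{n,v}$ to the classical KSBA moduli problem together with a canonical stack construction. Given a flat family of twisted stable pairs $(\cX, \cD) \to B$, one passes to the coarse moduli space $(X,D) \to B$, obtaining a flat family of pairs with $K_X+D$ a $\bQ$-Cartier divisor. The hypothesis that $K_\cX + \cD$ is Cartier should translate precisely to Koll\'ar's condition that $\omega_\pi^{[m]}(mD)$ commute with arbitrary base change for every $m$. Conversely, starting from a Koll\'ar family, one recovers $\cX$ canonically as an iterated root stack of an appropriate reflexive power of $\omega_\pi(D)$, following the construction of \cite{AH}. The first real task is to verify that these constructions are quasi-inverse equivalences and, crucially, that they are functorial under base change, so that $\cK_{n,v}$ is equivalent as a stack to Koll\'ar's moduli stack of stable pairs satisfying his global condition.

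Algebraicity and the Deligne--Mumford property can then be deduced as follows. Algebraicity follows either by transferring it from Koll\'ar's stack via the equivalence above, or more directly by embedding $\cK_{n,v}$ into a Hilbert-type stack of polarized orbispaces: since $K_\cX + \cD$ is genuinely Cartier, a sufficiently high power $N(K_\cX + \cD)$ induces a closed embedding of $\cX$ into a weighted projective stack, and $\cK_{n,v}$ appears as a locally closed substack of a Hilbert stack of such embeddings with a choice of divisor $\cD$. The Deligne--Mumford property reduces to showing that the automorphism group of a twisted stable pair $(\cX, \cD)$ with $K_\cX + \cD$ ample and slc is finite and reduced. For this one notes that $\Aut(\cX, \cD) \to \Aut(X,D)$ is finite since the canonical stack structure on $\cX$ is uniquely determined by $(X,D)$, and $\Aut(X,D)$ is finite by the standard finiteness of automorphisms of canonically polarized slc pairs.

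Properness is established via the valuative criterion over a DVR $R$ with fraction field $K$. Separatedness follows from the uniqueness of the canonical stack and the uniqueness of the KSBA stable model of a pair over a DVR. For existence, one starts from a twisted stable pair $(\cX_K, \cD_K)$, passes to the coarse space to obtain $(X_K, D_K)$, applies semistable reduction and the stable reduction theorem for pairs (due to Hacon--Xu--Koll\'ar after \cite{BCHM, HX, HMX, KP}) to obtain, after a finite base change, a stable limit $(X, D)/R$ satisfying Koll\'ar's condition, and then forms the associated canonical stack to obtain $(\cX, \cD)/R$. Finite type follows from boundedness of slc stable pairs of fixed volume $v$.

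The main technical obstacle is the base-change compatibility of the canonical stack construction: one must check that forming the root stack along the divisor locus of non-Cartier behavior of $\omega(D)$ commutes with arbitrary base change, and that this yields a \emph{flat} family of Deligne--Mumford stacks whose fibers are exactly the pointwise canonical stacks. This is where the hypothesis that $\cD$ is reduced enters critically, since it ensures that $\omega_\pi^{[m]}(mD)$ is controlled and that the root stack along a divisor in $X$ lifts the singular structure of the pair uniformly in families. Once this functorial equivalence with Koll\'ar's moduli problem is in hand, the remaining ingredients --- algebraicity, Deligne--Mumford property, boundedness, and the valuative criterion --- can be transferred from the classical setting.
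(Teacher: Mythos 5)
Your outline diverges from the paper in two places where the divergence is not just stylistic but hits a genuine gap. First, the identification of $\cX$ with ``an iterated root stack of an appropriate reflexive power of $\omega_\pi(D)$'' is not the right construction: the stacks parametrized by $\cK_{n,v}$ have \emph{no stabilizers in codimension one}, whereas a root stack along a divisor introduces codimension-one stabilizers. The correct construction (and the one whose base-change behavior encodes Koll\'ar's condition) is the Abramovich--Hassett quotient $\cX_F = [\,\spec_{\cO_X}(\bigoplus_{n\in\bZ} F^{[n]})/\bG_m\,]$ with $F = \omega_{X/B}(D)$, which is flat over $B$ precisely when all reflexive powers $F^{[n]}$ commute with base change. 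Your ``main technical obstacle'' paragraph is therefore aimed at the wrong construction; the actual content needed is the AH correspondence between Koll\'ar families of $\bQ$-line bundles and polarized orbispaces, which the paper imports as Theorem \ref{Teo:AH}.

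Second, the proposed reduction to ``Koll\'ar's moduli stack of stable pairs satisfying his global condition'' is close to circular: the algebraicity of that moduli problem \emph{including the divisor data over an arbitrary (possibly non-reduced) base} is exactly the delicate point this theorem is meant to resolve, and you give no mechanism for propagating the divisor in families. The paper's solution is to encode $D$ as a morphism $\phi:\omega_{\cX/B}\to\cL$, prove $\phi\otimes\cL^{-1}$ is injective (Lemma \ref{Lemma:phi:gives:an:ideal}), and deduce from the Du Bois property of slc fibers that $\cD_\phi\to B$ is flat and commutes with base change (Corollary \ref{Cor:D:is:a:FLAT:family}); algebraicity is then proved directly by exhibiting $\cH$ as an open substack of a Hom-stack over $\cO rb^{\cL}$, and $\cM_{n,v}\subset\cH$ as open, which requires showing the slc locus is open (constructibility via a simultaneous log resolution over the generic point, plus stability under generalization via inversion of adjunction) and that the volume is locally constant. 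None of these steps appears in your outline, and without them neither algebraicity nor the openness of the slc condition can be ``transferred.'' Your treatment of the Deligne--Mumford property (injectivity of $\Aut(\cX,\cD)\to\Aut(X,D)$ plus finiteness of automorphisms of the coarse stable pair), of properness via stable reduction and the valuative criterion, and of finite type via boundedness is in line with the paper, but those parts rest on the foundation you have not supplied.
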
 

To incorporate the divisor into the framework of \cite{AH}, we replace $D$ with the morphism of sheaves $\cO_X(K_X) \to \cO_X(K_X + D)$. In particular, we consider pairs $(\cX, \phi : \omega_\cX \to \cL)$ where $(\cX, \cL)$ is a polarized orbispace (see Definition \ref{Def:polorbitspace}) and $\phi$ is a nonzero morphism which cuts out the divisor $\cD$ as the support of its cokernel (see \ref{sec:obj:H}, \ref{Def:moduli:functor} and \ref{Def:KSBA:component}).

The upshot, by taking the coarse moduli space, is that twisted stable pairs $(\cX \to B, \phi : \omega_\cX \to \cL)$ are equivalent to flat families $\pi : (X,D) \to B$ with stable fibers and that satisfy Koll\'ar's condition for the log canonical sheaves: the sheaves $\omega_\pi^{[m]}(mD)$ are flat and compatible with base change for every $m$. 

\begin{Cor}\label{cor:intro:def}
Let $(\cX, \cD)$ be a twisted stable pair with trivial stabilizers in codimension $1$ and coarse space $(X,D)$. Then infinitesimal deformations of the associated pair $(\cX, \phi : \omega_\cX \to \cL)$ are in bijection with infinitesimal deformations of the stable pair $(X,D)$ that satisfy Koll\'ar's condition. 
\end{Cor}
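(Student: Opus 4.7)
The plan is to read the corollary off from the equivalence of moduli functors that underlies Theorem \ref{thm:intro1}: the upshot stated in the introduction is that flat families $(\cX \to B, \phi : \omega_\cX \to \cL)$ are equivalent to flat families $\pi : (X,D) \to B$ of stable pairs satisfying Kollár's condition. The corollary asks for exactly this equivalence applied to $B = \spec(A)$ where $A$ is a local Artinian $k$-algebra with residue field $k$, together with the requirement that the restriction to $\spec(k)$ recovers the given $(\cX, \phi)$, respectively $(X,D)$.

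For the forward direction, given a deformation $(\cX_A, \phi_A) \to \spec(A)$ of $(\cX, \phi)$, I would take the coarse space to obtain $X_A \to \spec(A)$, with $D_A$ defined as the image of the support of the cokernel of $\phi_A$. Flatness of $X_A \to \spec(A)$ follows from flatness of $\cX_A \to \spec(A)$ together with tameness of $\cX_A$, which ensures that coarse space formation commutes with base change; flatness of $D_A$ is Corollary \ref{Cor:D:is:a:FLAT:family}. The sheaves $\omega_{X_A/A}^{[m]}(m D_A)$ can be identified with the pushforward of $\cL_A^{\otimes m}$ to $X_A$, which is a line bundle on a flat orbifold and therefore flat and compatible with base change --- this is precisely Kollár's condition.

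For the reverse direction, I would invoke the canonical/root stack construction in the style of Abramovich-Hassett: from the deformation $(X_A, D_A)$ together with the reflexive sheaves $\omega_{X_A/A}^{[m]}(m D_A)$ and their compatibility under base change, one reconstructs a flat family of orbifolds $\cX_A$ with a line bundle $\cL_A$ and a morphism $\phi_A : \omega_{\cX_A} \to \cL_A$. That the construction is flat over $\spec(A)$ and functorial in $A$ is part of what goes into the proof of Theorem \ref{thm:intro1}. Once one has a functor in each direction, the fact that they are mutually inverse can be checked on geometric fibers, reducing to the corresponding equivalence over an algebraically closed field.

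The main obstacle is genuinely the integral (as opposed to fiberwise) equivalence between the two moduli problems over non-reduced Artinian bases. One must verify that coarse moduli space formation interacts well with the infinitesimal deformation --- so that $X_A$ is a genuine deformation of $X$ rather than a thickening --- and that the canonical stack construction produces a flat family over $\spec(A)$ whose formation commutes with pullback to $\spec(k)$. Both are settled in the construction of $\cK_{n,v}$ carried out in Theorems \ref{Teo:we:have:alg:stack} and \ref{Teo:the:KSBA:stack:is:proper:DM}, so once those are in hand the corollary is a formal consequence of unwinding the definition of infinitesimal deformation on each side.
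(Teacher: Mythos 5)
Your proposal is correct and follows essentially the same route as the paper: the corollary is read off from the Abramovich--Hassett correspondence (Theorem \ref{Teo:AH}) applied over Artinian bases, combined with the translation of the divisor into the morphism $\phi:\omega_\cX\to\cL$, flatness of $\cD_\phi$ (Corollary \ref{Cor:D:is:a:FLAT:family}), and the reflexive-extension argument of Lemma \ref{Lemma:unique:arrow:reflexive:hull} used in the construction of $\cK_{n,v}$. The paper likewise treats the statement as a formal unwinding of this equivalence rather than giving a separate argument.
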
 

\subsection{Applications to gluing and adjunction}
The formalism of twisted stable pairs simplifies the adjunction formula. There is no longer a correction term coming from the singularities of $X$:

\begin{theorem}[Proposition \ref{Prop:no:different:on:stack} and Corollary \ref{Cor:different:only:from:double:locus:arbitrary:dim}]\label{thm:intro2}
Let $(\cX, \cD)$ be a twisted stable pair. Then $\cL\big|_{\cD} \cong \omega_\cD$. Moreover, let $(\cX \to B, \phi : \omega_\cX \to \cL)$
be a twisted stable pair over an arbitrary base and suppose that $\cD \to B$ is an $S_2$ morphism. Then $(\cD \to B,\cL\big|_{\cD})$ is a family of canonically polarized orbifolds with semi-log canonical singularities.
\end{theorem}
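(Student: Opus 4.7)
The plan is to first prove the absolute statement $\cL|_\cD \cong \omega_\cD$ for a single twisted stable pair, and then upgrade it to families using that $\cD \to B$ is $S_2$.

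\emph{Step 1 (Cartier adjunction on $\cX$).} By the construction of twisted stable pairs in \S\ref{sec:intro:tsp}, the divisor $\cD$ is a Cartier divisor on $\cX$ and $\cL \cong \omega_\cX(\cD)$ as line bundles. Since $(\cX,\cD)$ is slc, the stack $\cX$ is $S_2$ and $\omega_\cX$ is its dualizing sheaf. The closed embedding $\cD \hookrightarrow \cX$ is locally cut out by a non-zero divisor, so it is a regular embedding of codimension one. Tensoring the short exact sequence $0 \to \cO_\cX(-\cD) \to \cO_\cX \to \cO_\cD \to 0$ by the line bundle $\cL = \omega_\cX(\cD)$ yields
\[
0 \to \omega_\cX \to \cL \to \cL|_\cD \to 0,
\]
and applying $\sHom(\cO_\cD, -)$ to the ambient dualizing sheaf identifies $\omega_\cD$ with $\sExt^1(\cO_\cD, \omega_\cX) \cong \cL|_\cD$. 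Equivalently, this is the standard Cartier adjunction formula for a regular embedding of codimension one; the point is that passing from the coarse pair $(X,D)$ to the orbifold $(\cX, \cD)$ makes $\cD$ Cartier, so there is no different.

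\emph{Step 2 (Relative statement).} Fix a twisted stable family $(\cX \to B, \phi : \omega_\cX \to \cL)$ with $\cD \to B$ an $S_2$ morphism. The restriction $\cL|_\cD$ is a line bundle on $\cD$, and it is $B$-ample since $\cL$ is $B$-ample and $\cD \hookrightarrow \cX$ is a closed embedding. For each $b \in B$, the fiber $(\cX_b, \cD_b)$ is a twisted stable pair in the sense of Theorem \ref{thm:intro1}, so by Step 1 applied fiberwise we obtain $\cL|_{\cD_b} \cong \omega_{\cD_b}$. In particular each $\cL|_{\cD_b}$ is ample, and $\omega_{\cD_b}$ is a line bundle, so $\cD_b$ is a Gorenstein, canonically polarized orbispace. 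Slc of $\cD_b$ (as a variety with trivial boundary) follows from the standard adjunction for slc pairs applied to the Cartier divisor $\cD_b \subset \cX_b$ (cf.\ the proof of Proposition \ref{Prop:no:different:on:stack}); the codimension-one stabilizers on $\cD_b$ are trivial because they already are on $\cX_b$ along the generic points of $\cD_b$.

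\emph{Step 3 (Base-change compatibility).} It remains to check that the identification $\cL|_\cD \cong \omega_{\cD/B}$ holds globally, not just fiberwise. Since $\cD \to B$ is flat with fibers that are $S_2$, the relative dualizing sheaf $\omega_{\cD/B}$ commutes with arbitrary base change and restricts to $\omega_{\cD_b}$ on fibers. Both $\cL|_\cD$ and $\omega_{\cD/B}$ are then line bundles on $\cD$ whose fiberwise restrictions are canonically isomorphic via Step 1, and the comparison map upgrades to a global isomorphism because the sheaves are flat over $B$ with $S_2$ fibers (so that Nakayama-type arguments and the uniqueness of the dualizing sheaf apply).

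\emph{Expected main obstacle.} Step 1 is essentially formal once one knows $\cD$ is Cartier. The real content is Step 3: promoting the fiberwise isomorphism $\omega_{\cX_b}(\cD_b)|_{\cD_b} \cong \omega_{\cD_b}$ to a family-level isomorphism in a way that is base-change compatible. This is precisely what the $S_2$ hypothesis on $\cD \to B$ is introduced to handle, by ensuring that $\omega_{\cD/B}$ behaves correctly under base change. One also has to be a little careful about the orbifold structure on $\cD$ inherited from $\cX$, but the codimension-one stabilizer hypothesis on $\cX$ propagates to $\cD$ without incident.
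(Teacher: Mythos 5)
Your Step 1 rests on a false premise: for a twisted stable pair, $\cD$ is \emph{not} a Cartier divisor on $\cX$, and $\omega_\cX$ is \emph{not} a line bundle in general. The orbifold structure is chosen so that $\cL$ (playing the role of $\cO(K_\cX+\cD)$) is invertible; the ideal of $\cD$ is by definition the image of $\phi\otimes\cL^{-1}$, i.e. $\cO_\cX(-\cD)\cong\omega_\cX\otimes\cL^{-1}$ (Lemma \ref{Lemma:phi:gives:an:ideal}), which is reflexive of rank one but invertible only where $\cX$ is Gorenstein. A concrete counterexample: take $D$ to be the image of $V(xy)$ in $X=\bA^2/\bmu_3$ acting with weights $(1,1)$; the form $\tfrac{dx\wedge dy}{xy}$ is invariant, so $K_X+D$ is Cartier and near this point the AH stack is $X$ itself with trivial stabilizer, yet $K_X$ has index $3$ and $D$ is not Cartier. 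This is consistent with Lemma \ref{Lemma:AH:stack:iso:along:nodes}, which asserts only that $K_X+D$ is Cartier at nodes of $D$, and with the fact that the paper singles out the plt surface case (Lemma \ref{Lemma:relation:singularities:different}) as the situation where $\cX$ is Gorenstein. The paper's proof of Proposition \ref{Prop:no:different:on:stack} is built precisely to avoid your assumption: it never uses that $\cD$ is Cartier, but instead applies $\cH om(-,\omega_f)$ to $0\to\cO_\cX(-\cD)\to\cO_\cX\to\iota_*\cO_\cD\to 0$, identifies $\cH om(\cO_\cX(-\cD),\omega_f)\cong\cH om(\omega_f,\omega_f)\otimes\cL\cong\cL$ via Lemma \ref{Lemma:unique:arrow:reflexive:hull}, and obtains a surjection $\cL\to\iota_*\omega_{\cD/B}$, which is an isomorphism after restriction because a surjection of line bundles is one. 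So the slogan is not ``$\cD$ becomes Cartier on the stack'' but ``$K_\cX+\cD$ becomes Cartier on the stack''; your argument proves the statement only under the extra hypothesis that $\cX$ is Gorenstein.

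There is a second gap in how you pass to arbitrary dimension. The duality identification $\omega_\cD\cong\cE xt^1(\cO_\cD,\omega_\cX)$ that you invoke needs $\cX$ Cohen--Macaulay and $\cD$ Gorenstein; this holds for twisted \emph{surface} pairs (demi-normal surfaces are CM and the fibers of $\cD$ are nodal curves), which is why Proposition \ref{Prop:no:different:on:stack} is stated for surfaces, but it fails in higher dimensions where slc $\cX$ need not be CM and $\cD$ need not be Gorenstein. The paper handles this in Corollary \ref{Cor:different:only:from:double:locus:arbitrary:dim} by applying the surface case only at the codimension-$2$ points of $\cX$, and then upgrading the resulting isomorphism in codimension $1$ on $\cD$ to a global one using that both $\omega_\cD$ and $\cL\big|_\cD$ are $S_2$ sheaves; your proposal applies the surface argument verbatim in all dimensions, which is not justified. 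Your Steps 2--3 for the family statement are broadly in line with the paper (flatness and base change of $\cD$ from Corollary \ref{Cor:D:is:a:FLAT:family}, fiberwise slc via inversion of adjunction once $\cD_b$ is demi-normal, and $\cL\big|_\cD$ a polarizing line bundle), but as written they all rest on the faulty Step 1.
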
 

The first part of Theorem \ref{thm:intro2} is essentially saying that the different (see Section \ref{Subsection:different}) of $(X,D)$ is replaced by the orbifold structure of $(\cX, \cD)$. We study the precise relation between the different on $(X,D)$ and the stabilizers on $(\cX, \cD)$ at the end of Section \ref{Section:local:and:global:structure:of:TSP}.
For surfaces, the second part of Theorem \ref{thm:intro2} gives us a morphism from $\cM_{2,v}$ to the moduli space of orbifold stable curves. 

As an application, we show that in the case of surfaces, the gluing results of \cite{Kol13}*{Theorem 5.13} can be extended to the orbifold setting and can be done functorially for families over an arbitrary base. More precisely, we construct the algebraic stack $\cG_{2,v}$ of gluing data consisting of triples $(\cX, \cD, \tau : \cD^n \to \cD^n)$ where $(\cX, \cD)$ is an object of $\cK_{2,v}$ as in Theorem \ref{thm:intro1}, $\cD^n$ is the normalization of $\cD$, and $\tau : \cD^n \to \cD^n$ is a generically fixed point free involution which preserves the preimage of the nodes of $\cD$. Denote by $\cK_{2,v}^\omega$ the moduli stack of stable surfaces. 

\begin{theorem}[Theorems \ref{Teo:gluing:morphisms} and \ref{Teo:gluing:morphism:between:moduli}]\label{thm:intro3} There is a morphism $\cG_{2,v} \to \cK_{2,v}^\omega$ which on the level of closed points sends a triple of gluing data as above to the stable surface given by gluing the coarse space $D \subset X$ along the involution $\tau$ as in \cite{Kol13}*{Theorem 5.13}.
\end{theorem}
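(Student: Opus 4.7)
The plan is to construct, for each morphism $B \to \cG_{2,v}$, a canonical $B$-family of stable surfaces via a relative gluing construction on the twisted pair $(\cX, \cD)$, then take coarse moduli; functoriality in $B$ will promote this to a morphism of stacks $\cG_{2,v} \to \cK_{2,v}^\omega$. Given gluing data $(\cX \to B, \cD, \tau : \cD^n \to \cD^n)$, the idea is to form a pushout $\cX' = \cX \sqcup_{\cD^n} (\cD^n/\tau)$ on the stacky level, where the maps are $\cD^n \to \cD \hookrightarrow \cX$ and $\cD^n \to \cD^n/\tau$, and then pass to coarse spaces to obtain $X' \to B$.

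First I would set up the relevant pushout diagram using the adjunction result. By Theorem \ref{thm:intro2}, $(\cD \to B, \cL|_{\cD})$ is a family of canonically polarized semi-log canonical orbicurves, so in particular $\cD^n \to B$ is a family of smooth tame orbifold curves and $\tau$ is a $B$-involution. The quotient $\cE := \cD^n/\tau$ then exists as a tame Deligne-Mumford stack flat over $B$, since $\tau$ is a free action away from a divisor and the quotient of a flat family by a finite flat equivalence relation exists in this setting. At this point, the gluing amounts to showing that the pushout $\cX'$ along the finite maps $\cD^n \to \cD \hookrightarrow \cX$ and $\cD^n \to \cE$ exists as an algebraic stack flat over $B$, and is compatible with arbitrary base change $T \to B$.

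For the existence of the pushout, I would invoke a relative version of Ferrand/Artin/Kollár gluing along finite morphisms, working étale-locally on the base. The twisted pair formalism is essential in two ways: (i) $\cD \to B$ is automatically flat (footnote referencing Corollary \ref{Cor:D:is:a:FLAT:family}), so $\cD \hookrightarrow \cX$ is a well-behaved family of Cartier divisors, and (ii) $\cD^n \to \cD$ is finite (the normalization of the orbicurve family), which is exactly the setting where pushouts along finite morphisms behave well. Once $\cX'$ is constructed, one checks that on geometric fibers it recovers Kollár's gluing from \cite{Kol13}*{Theorem 5.13}; taking coarse spaces then yields the desired family $X' \to B$.

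The main obstacle is verifying that $X' \to B$ is a family of stable surfaces in the strong sense required by $\cK_{2,v}^\omega$, i.e., that Kollár's condition on $\omega_{X'/B}^{[m]}$ holds. This is exactly where the twisted-pair philosophy pays off: on $\cX$ the sheaf $\cL$ is a genuine line bundle with $\cL|_{\cD} \cong \omega_{\cD}$ on the nose (Theorem \ref{thm:intro2}), so the gluing datum descends $\cL$ along $\tau$ to produce a line bundle $\cL'$ on $\cX'$. The line bundle $\cL'$ is automatically flat over $B$ and compatible with base change, and its powers push forward to give the reflexive hulls $\omega_{X'/B}^{[m]}(mD')$ on the coarse space, whose flatness and base change compatibility then follow. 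Stability on fibers is inherited from the fiberwise version of Kollár's theorem. Functoriality of the whole construction in $B$ is automatic since coarse space formation, taking the quotient $\cD^n/\tau$, and the pushout each commute with pullback along $T \to B$, giving the desired morphism of stacks.
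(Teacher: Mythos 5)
Your overall strategy (glue the stacky family along the involution, then show the result is the right family of canonically polarized orbispaces, with functoriality coming from base-change compatibility) is the same in spirit as the paper's, but there is a genuine gap at the heart of the construction: the glued object is \emph{not} the naive pushout $\cX\sqcup_{\cD^n}[\cD^n/\tau]$ with the naively descended line bundle. The subtlety is Koll\'ar's residue condition: sections of $\omega_{X'}$ restrict on $\cD^n$ to $\tau$-\emph{anti}-invariant sections of $\omega_{\cD^n}(\Delta)$, so the line bundle on $[\cD^n/\tau]$ that pulls back from the glued canonical bundle is $\cG\otimes\chi$, the descent of $\omega_{\cD^n}(\Delta)$ twisted by the sign character of $\bmu_2$ (Proposition \ref{Prop:line:bundle:which:gives:rep:morphism:Ddividedbytau:to:BGm}), not the natural descent $\cG$ you propose. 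Concretely, at an isolated fixed point of $\tau$ away from $\Delta$ (which maps to a pinch point of $X'$, where $\omega_{X'}$ is a line bundle and the AH stack has trivial stabilizer), your quotient $[\cD^n/\tau]$ carries a spurious $\bmu_2$ stabilizer acting trivially on $\cG\otimes\chi$; gluing along it produces a stack that is not even a polarized orbispace there, hence not the AH family needed for a $B$-point of $\cK_{2,v}^\omega$. This is why the paper replaces $[\cD^n/\tau]$ by the relative coarse space $\cC$ of the map to $B\bG_m$ induced by $\cG\otimes\chi$, checks that this is an isomorphism along $[\Delta/\tau]$ (Lemma \ref{Lemma:right:stacky:structure:on:quotient:of:delta:to:have:closed:embedding}), inserts the gerbe $\cS$ at the finitely many distinguished points over the nodes (Lemma \ref{Lemma:unique:representable:arrow:gerbes:over:pt}, Proposition \ref{Prop:we:have:the:map:P:to:AH:Stack}), performs a chain of pinchings via Rydh's theorem, and only then proves the result is the AH stack of Koll\'ar's glued surface (Proposition \ref{Prop:pushout:gives:the:AH:stack:over:speck}).

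Two further points you assert as ``automatic'' require proof. First, identifying the glued stack fiberwise with the AH stack of $X'$ is the main content of the pointwise case (it uses Lemma \ref{Lemma:iso:to:AH:stack:if:P(F):scheme} to recognize $\cX$ as the normalization of $\cX'$ and a seminormality plus stabilizer-order comparison to show $\Phi$ is an isomorphism); without it you cannot conclude the coarse family satisfies Koll\'ar's condition, since membership in $\cK_{2,v}^\omega$ is exactly the statement that your stack \emph{is} the AH family. Second, flatness of the pinchings over $B$ and their compatibility with arbitrary base change is not formal: the paper needs the constancy of $\Delta$, $\cN$, $\cS$ over $B$ and the algebraic Lemma \ref{Lemma:pushout:commutes:with:bc} (a fiber product of rings glued along a flat quotient remains flat and commutes with $-\otimes_R S$). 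Your argument as written would need all three ingredients (the $\chi$-twist and relative coarse space, the identification with the AH stack, and the flat base-change lemma for pinchings) to be supplied.
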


Theorem \ref{thm:intro3} is an analogue for stable surfaces of the gluing morphisms that describe the boundary of the moduli space of stable curves $\overline{\cM}_g$ \cite{Knu}. In Section \ref{subsec:boundary:strata} we show that there exists a finite stratification of $\cK_{2,v}^\omega$ such that each boundary stratum is the image of a family of gluing data under the morphism in Theorem \ref{thm:intro3}. 

\subsection{Applications to deformation problems}\label{sec:intro:def}
As a first application of our approach, we compute the admissible deformations and obstructions for a plt surface pair $(X,D)$. If we denote with $(\cX,\cD)$ its associated twisted stable pair, then we have equivalencies $$\operatorname{Def}(\cX, \omega_{\cX} \to \cL)\cong \operatorname{Def}(\cX, \omega_{\cX}\otimes \cL^{-1} \to \cO_{\cX})\cong \operatorname{Def}(\cX, \cG \to \cO_{\cX})$$ where $\cG = \omega_{\cX}\otimes \cL^{-1}$. But by Lemma \ref{Lemma:relation:singularities:different} when $(X,D)$ is a plt surface, the surface $\cX$ is Gorenstein. Therefore, $\cG$ is a line bundle, so $\cG \to \cO_{\cX}$ induces a morphism $\psi:\cX\to[\mathbb{A}^1/\bG_m]$. In particular, $$\operatorname{Def}(\cX, \omega_{\cX} \to \cL) = \operatorname{Def}(\psi).$$ The explicit description of the stabilizers of $\cX$ around $\cD$ shows that $\psi$ is representable, so by \cite{Ols06} the space $\operatorname{Def}(\psi)$ latter is controlled by the cotangent complex $\mathbb{L}_{\cX/[\mathbb{A}^1/\bG_m]}$.

In Section \ref{section_example} we use this approach to explicitly compute the admissible deformations and obstructions of a K3 surface with an $A_1$ singularity at a point $p$, and a divisor $D$ which is \emph{not} Cartier passing through the singular point. Using twisted stable pairs we prove that the Koll\'ar's moduli space of stable pairs is smooth of dimension 19 at such a point and agrees with the moduli space of the quasi-polarized K3 pair obtained by resolving the singularity. In forthcoming work we will extend this approach to study the deformations and obstructions for pairs in higher dimensions. 

\subsection{Outline} In Section \ref{Section:background} we recall the background we need on singularities of the MMP \cites{Kol13, KM98} and polarized orbispaces \cite{AH}. In Section \ref{Section:moduli:TSP} we give the definition of twisted stable pair (Definition \ref{Def:moduli:functor}) and we prove Theorem \ref{thm:intro1}.
In Section \ref{Section:local:and:global:structure:of:TSP} we study the local properties of polarized orbispaces in order to compare our moduli functor with the $\mathbb{Q}$-Gorenstein deformations used by \cite{Hac04} and to prove Theorem \ref{thm:intro2}. In Section \ref{Section:gluing} we prove Theorem \ref{thm:intro3} on gluing morphisms. In \ref{section_example} we give an example deformation theoretic computation using twisted stable pairs. 

\subsection*{Acknowledgements} The authors would like to thank Dan Abramovich, Shamil Asgarli, Brendan Hassett, J\'anos Koll\'ar, S\'andor Kov\'acs, Davesh Maulik, Jonathan Wise, and Chenyang Xu for many helpful conversations. We thank David Rydh for informing us of Theorem \ref{Teo:Rydh} and its proof. The first author is supported by an NSF Postdoctoral Fellowship DMS-1803124 and was partially supported by NSF grant DMS-1759514. The second author is partially supported by NSF grant DMS-1759514. The second author also thanks the UC Berkeley Department of Mathematics for their hospitality while part of this research was conducted.  

\subsection*{Conventions} We work over an algebraically closed field $k$ of characteristic 0.
Unless otherwise specified, all the stacks will be of finite type over $k$. A stack $\cX$ has property $\cP$ generically if there is an open embedding $U \to \cX$ which intersects all the irreducible components of $\cX$, such that the points of $U$ have property $\cP$. 
When algebraic stack $\cX$ admits a coarse moduli space, unless otherwise stated we will denote its coarse moduli space by $X$. When we say that a diagram of stacks commutes, we mean it 2-commutes.

\section{Background on the minimal model program and polarized orbispaces}\label{Section:background}
This section is divided into three subsections. In the first one, we begin by recalling the properties we need about the singularities of the MMP.
Then we introduce the analogous singularities for a Deligne-Mumford (DM) stack.
In the second subsection we recall the definition of the different.
Finally, in the last subsection we recall the relevant constructions and definitions from \cite{AH}.
\subsection{Singularities of the MMP} 
In this subsection we recall the properties of the singularities that appear in the MMP, which are relevant for the rest of the paper.
For a more detailed exposition, and for the definitions of lc, slc, plt, demi-normal and Du Bois, we refer to \cite{Kollarsingmmp} and \cite{KM98}.

We begin by introducing the following notation:
\begin{Notation}\label{Notation:canonical:as:pushforward}
We say that an open subset $U \subset X$ is \emph{big} if the complement of $U$ has codimension at least $2$ in $X$. Furthermore,
given $f:X\to  B$ a flat morphism of DM stacks, we denote with $U(f)$ the $f$-Gorenstein locus.
\end{Notation}

We highlight the following useful observation: 

\begin{Oss}\label{Oss:Gor:locus:pulls:back:to:G:locus} Let $f:\mathcal{X} \to B$ be a flat family of DM stacks and let $B' \to B$ a morphism. Consider the pullback $ f':\mathcal{X}':=\mathcal{X}\times_B B' \to B'$ and let $h:\mathcal{X}' \to \mathcal{X}$ be the first projection. Then $h^{-1}(U(f))=U(f')$.
\end{Oss}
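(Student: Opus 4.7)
The plan is to reduce the statement to a fiberwise question and then apply the classical ascent/descent result for Gorenstein local rings under flat local extensions.

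First I would pass to \'etale atlases to reduce to the case where $\mathcal{X}$, $\mathcal{X}'$, $B$, $B'$ are Noetherian schemes. Since $f$ is flat by hypothesis and $f'$ is flat as a base change, being Gorenstein at a point is a fiberwise condition for both morphisms: a point $x$ lies in $U(f)$ iff the scheme-theoretic fiber $\mathcal{X}_{f(x)}$ is Gorenstein at $x$, and similarly for $f'$. So the question reduces to showing that for $x' \in \mathcal{X}'$ with $x = h(x')$, $b' = f'(x')$, and $b = f(x)$, the fiber $\mathcal{X}_b$ is Gorenstein at $x$ iff the fiber $\mathcal{X}'_{b'} \cong \mathcal{X}_b \times_{\spec \kappa(b)} \spec \kappa(b')$ is Gorenstein at $x'$.

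For this, consider the induced morphism of local rings $R := \mathcal{O}_{\mathcal{X}_b, x} \to S := \mathcal{O}_{\mathcal{X}'_{b'}, x'}$. This is flat and local, and its closed fiber is a localization of $\kappa(x) \otimes_{\kappa(b)} \kappa(b')$ at a prime. Since the characteristic is zero, $\kappa(b)$ is perfect, so $\kappa(x) \otimes_{\kappa(b)} \kappa(b')$ is a geometrically regular ring; its localizations at primes are regular local rings, and hence Gorenstein. By the classical theorem that the Gorenstein property ascends and descends under flat local extensions with Gorenstein closed fiber, $R$ is Gorenstein iff $S$ is Gorenstein. This delivers both inclusions $h^{-1}(U(f)) \subseteq U(f')$ and $U(f') \subseteq h^{-1}(U(f))$ at once.

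The only nontrivial input is the classical ascent/descent result for Gorenstein local rings under flat local morphisms, together with the geometric regularity of residue field extensions over perfect base fields; both are standard in characteristic zero, so the remainder of the argument is a routine reduction to local algebra.
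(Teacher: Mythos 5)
Your argument is correct. Note that the paper states this as an Observation and offers no proof at all, so there is nothing to compare against; your reduction --- pass to \'etale atlases, use that the relative Gorenstein locus of a flat morphism is the locus where the fiber is Gorenstein, identify the fiber of $f'$ with the base change of the fiber of $f$ along $\kappa(b)\to\kappa(b')$, and invoke ascent/descent of the Gorenstein property along a flat local homomorphism with Gorenstein closed fiber (e.g.\ Matsumura, Theorem 23.4) --- is exactly the standard justification one would supply. Two minor remarks: for the descent direction ($S$ Gorenstein $\Rightarrow$ $R$ Gorenstein) the hypothesis on the closed fiber is not even needed; and the appeal to characteristic zero is dispensable, since for a finitely generated extension $\kappa(x)/\kappa(b)$ the ring $\kappa(x)\otimes_{\kappa(b)}\kappa(b')$ is always a (Noetherian) complete intersection, hence Gorenstein, so the observation holds in arbitrary characteristic.
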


The following lemma is known to the experts. For convenience we include a proof. 
\begin{Lemma}\label{Lemma:unique:arrow:reflexive:hull}
Consider a flat $S_2$ morphism $\pi: \cX \to B$ from a DM stack $\cX$ to a scheme, and let $U \subseteq \cX$ be an open substack which is big on each fiber.
Let $\cE$ be a reflexive sheaf on $\cX$, and let $\cF$ be a coherent sheaf on $\cX$.
Then the restriction map
 $\operatorname{Hom}_{\cX}(\cF, \cE) \to \operatorname{Hom}_U(\cF_{|U}, \cE_{|U})$ is an isomorphism.
\end{Lemma}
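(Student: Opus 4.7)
My strategy reduces the lemma to proving that the canonical map $\cE \to j_\ast(\cE|_U)$ is an isomorphism, where $j\colon U \hookrightarrow \cX$ denotes the open immersion. Once this is known, for any coherent sheaf $\cF$ on $\cX$ the $(j^\ast, j_\ast)$-adjunction gives
\[
\Hom_\cX(\cF, \cE) \;\cong\; \Hom_\cX(\cF, j_\ast(\cE|_U)) \;\cong\; \Hom_U(\cF|_U, \cE|_U),
\]
and by naturality this isomorphism coincides with the restriction map.

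To establish the isomorphism $\cE \cong j_\ast(\cE|_U)$, set $Z := \cX \setminus U$ and recall the four-term local cohomology sequence
\[
0 \to \mathcal{H}^0_Z(\cE) \to \cE \to j_\ast(\cE|_U) \to \mathcal{H}^1_Z(\cE) \to 0.
\]
The problem therefore reduces to showing that $\operatorname{depth}_Z \cE \geq 2$. Working on a smooth atlas of $\cX$ turns this into a statement about sheaves on schemes, and the usual depth inequality in the flat setting yields $\operatorname{depth}_x \cE \geq \operatorname{depth}_x(\cE|_{\cX_{\pi(x)}})$ for every $x$. This further reduces the problem to the fiberwise statement: for each $b \in B$, the restriction $\cE|_{\cX_b}$ has depth at least $2$ at every point of $Z \cap \cX_b$. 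Since $\cX_b$ is $S_2$ and $U \cap \cX_b$ is big, every such point has codimension $\geq 2$ in $\cX_b$, so by Serre's criterion it suffices to verify that $\cE|_{\cX_b}$ is itself $S_2$.

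The main obstacle is deducing the fiberwise $S_2$ property of $\cE|_{\cX_b}$ from the hypothesis that $\cE$ is reflexive on the total stack $\cX$: because $B$ is arbitrary, $\cX$ itself need not be $S_2$, so the global implication ``reflexive $\Rightarrow$ $S_2$'' cannot be invoked directly on $\cX$. I would get around this by working on a smooth chart $V$ of $\cX$ and exploiting the double-dual presentation $\cE|_V \cong \mathcal{H}\! om(\mathcal{H}\! om(\cE|_V, \cO_V), \cO_V)$: the functor $\mathcal{H}\! om(-, \cO_V)$ is compatible with flat base change to the fibers and produces sheaves whose depth at each point is bounded below by that of the target $\cO_V$. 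Since $\cO_{V_b}$ is $S_2$ by the $S_2$ hypothesis on $\pi$, a standard local Ext computation gives the desired fiberwise $S_2$ property of $\cE|_{\cX_b}$, completing the depth estimate and finishing the proof.
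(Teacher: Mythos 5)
Your overall skeleton (reduce to $\cE \xrightarrow{\sim} j_*(\cE|_U)$, conclude by adjunction, and prove the isomorphism by showing $\operatorname{depth}_Z\cE \ge 2$ via local cohomology) is exactly the shape of the paper's argument, which passes to an atlas and quotes \cite{HK}*{Proposition 3.6.1} for the isomorphism $\cE \to j_*(j^*\cE)$. However, the way you establish the depth bound has a genuine gap, in two places. First, the inequality $\operatorname{depth}_x \cE \ge \operatorname{depth}_x(\cE|_{\cX_{\pi(x)}})$ is \emph{not} a consequence of flatness of $\pi$: the standard formula $\operatorname{depth}_x \cE = \operatorname{depth}(\cO_{B,\pi(x)}) + \operatorname{depth}_x(\cE|_{\cX_{\pi(x)}})$ requires the \emph{module} $\cE$ to be flat over $B$, which is not part of the hypotheses (reflexivity on $\cX$ gives no $B$-flatness), and without it there is no such comparison. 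Second, even granting that reduction, the fiberwise claim is not available: restriction to a fiber is base change along $\operatorname{Spec} k(b) \to B$, which is not flat, and $\cH om(-,\cO)$ does not commute with it in general; moreover the restriction of a reflexive sheaf to a special fiber need not be $S_2$ at all — from a local presentation $0 \to \cE \to \cO^{\oplus b} \to \cO^{\oplus a}$ one only gets depth $\ge 1$ for $\cE|_{\cX_b}$ at the relevant points, because the image sheaf may fail to stay a subsheaf of a free sheaf after restriction. So the step ``reflexive on $\cX$ $\Rightarrow$ $S_2$ on fibers'' is where the proof breaks.

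The repair is to never restrict $\cE$ to the fibers. Since $\cE$ is reflexive, locally it is a second syzygy: $0 \to \cE \to \cO_\cX^{\oplus b} \to \cO_\cX^{\oplus a}$, whence $\operatorname{depth}_x \cE \ge \min(2, \operatorname{depth}_x \cO_\cX)$. Now apply the flatness formula to the sheaf that \emph{is} $B$-flat, namely $\cO_\cX$: for $x \in Z = \cX \setminus U$ lying over $b$, $\operatorname{depth}_x \cO_\cX = \operatorname{depth}(\cO_{B,b}) + \operatorname{depth}_x(\cO_{\cX_b}) \ge 2$, because $\cX_b$ is $S_2$ and $x$ has codimension $\ge 2$ in $\cX_b$ (bigness of $U \cap \cX_b$). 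This gives $\operatorname{depth}_Z \cE \ge 2$, kills $\cH^0_Z(\cE)$ and $\cH^1_Z(\cE)$, and your adjunction step then finishes the proof; this is precisely the content of the result from \cite{HK} that the paper invokes.
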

\begin{proof} Up to replacing $\cX$ with an atlas, we can assume it is a scheme, which we denote by $X$. Let $j:U \to X$ be the inclusion of $U$.
Since $\cE$ is reflexive and $U$ is big along each fiber, from \cite{HK}*{Proposition 3.6.1} the morphism
$\cE \to j_*(j^*\cE)$ is an isomorphism. Therefore, by the adjunction between $j_*$ and $j^*$, we have
$
 \Hom_X(\cF, \cE) \cong \Hom_X(\cF, j_*(j^*\cE)) \cong \Hom_U(j^*(\cF), j^*(\cE))
$.
 \end{proof}
Consider $f:X\to B$ a flat separated morphism of locally Noetherian DM stacks with $S_2$ and pure $d$-dimensional fibers. Let $\omega_f^{\dotr}$ be the relative dualizing complex. 

\begin{Def}
We define the \emph{relative canonical sheaf} $\omega_{X/B}$ or $\omega_f$ to be the sheaf $ \cH^{-d}(\omega_f^{\dotr})$. 
\end{Def}

When we assume that the fibers are Gorenstein in codimension $1$, then $\omega_{X/B}$ agrees with the pushforward $\iota_*\omega_{U(f)/B}$ where $\iota:U(f) \hookrightarrow X$ the inclusion of the relative Gorenstein locus (see \cite{YN18}*{Section 5}). In this case $\omega_{X/B}$ is in fact a reflexive sheaf \cite{YN18}*{Proposition 5.6}. 

Next we generalize the definitions for singularities of pairs to DM stacks:
 \begin{Def}\label{Def:slc:stack}
 Consider a pair $(\cX,\sum a_i \cD_i)$ consisting of a DM stack $\cX$ and reduced equi-dimensional closed substacks $\cD_i$ of codimension 1 with $a_i \in \mathbb{Q}_{(0,1]}$. We say that the pair $(\cX,\sum a_i \cD_i)$ is \emph{log canonical} or \emph{lc} (resp. \emph{semi-log canonical} or \emph{slc}) if there is an \'etale cover $f : Y \to \cX$ by a scheme such that $(Y, \sum a_i f^*\cD_i)$ is log canonical (resp. semi-log canonical).
 \end{Def}

 The following observation follows from \cite{Kollarsingmmp}*{2.40, 2.41 and Corollary 2.43} (see also \cite{Kollarsingmmp}*{Proposition 2.15}). 
 
\begin{Oss}\label{Oss:definition:lc:doesnotdepend:on:covering}
Consider a pair $(X,\sum a_i D_i)$ consisting of a demi-normal scheme $X$ and pure codimension $1$ reduced subschemes $D_i$.
Let $f : Y \to X$ be an étale surjective morphism, or a finite surjective morphism that is étale in codimension 1, with $Y$ demi-normal. Then the pair $(X,\sum a_i D_i)$ is lc (resp. slc) if and only if $(Y,\sum a_i f^*D_i)$ is lc (resp. slc).
\end{Oss}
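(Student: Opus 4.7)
The plan is to handle the lc case first by a direct discrepancy calculation, and then reduce the slc case to the lc case via normalization.

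First I would treat the lc case. Starting from a log resolution $\pi : \widetilde X \to X$ of $(X, \sum a_i D_i)$: if $f$ is étale, then the fiber product $Y \times_X \widetilde X \to Y$ is a log resolution of $(Y, \sum a_i f^* D_i)$ whose exceptional divisors carry the same discrepancies as those on $\widetilde X$, so the lc property transfers directly. If instead $f$ is finite surjective and étale in codimension one, the ramification divisor of $f$ is empty, so $K_Y + f^*(\sum a_i D_i) = f^*(K_X + \sum a_i D_i)$. Consequently, for any divisorial valuation $E$ on $Y$ with image valuation $E'$ on $X$, the discrepancies agree: $a(E; Y, \sum a_i f^*D_i) = a(E'; X, \sum a_i D_i)$. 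Since surjectivity of $f$ ensures every valuation over $X$ lifts to one over $Y$, this is precisely the setup of Koll\'ar 2.40--2.41 and Corollary 2.43, which give the equivalence.

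Next I would reduce the slc case to the lc case via normalization. Recall that $(X, \Delta)$ with $X$ demi-normal is slc if and only if $(X^n, \operatorname{Cond}_{X^n} + \Delta^n)$ is lc, where $X^n$ is the normalization and $\operatorname{Cond}_{X^n}$ is the reduced conductor divisor. Since normalization commutes with étale base change, and since for a finite étale-in-codimension-one cover between demi-normal schemes normalization behaves analogously, the induced morphism $\overline f : Y^n \to X^n$ is again étale (respectively finite surjective and étale in codimension one) between normal schemes. The key compatibility one needs is that the conductor pulls back: $\overline f^{\,*} \operatorname{Cond}_{X^n} = \operatorname{Cond}_{Y^n}$. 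Applying the lc statement to $\overline f$ together with this conductor comparison then yields the equivalence for slc.

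The hard part will be verifying the compatibility $\overline f^{\,*} \operatorname{Cond}_{X^n} = \operatorname{Cond}_{Y^n}$, especially in the finite surjective case, as well as ensuring that demi-normality of $X$ is equivalent to demi-normality of $Y$ under these hypotheses. The étale-in-codimension-one assumption is precisely what is required: the conductor is a codimension-one phenomenon supported on the non-normal locus, and unramification in codimension one guarantees that the two reduced pure codimension-one divisors match; any discrepancy would be supported in codimension $\geq 2$ and hence vanish. This is the input supplied by Koll\'ar's Proposition 2.15, and once it is in hand the rest of the argument is a formal consequence of the finite-cover machinery.
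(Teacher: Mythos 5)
Your proposal is correct and takes essentially the same route as the paper, whose ``proof'' of this Observation is simply the citation of \cite{Kollarsingmmp}*{2.40, 2.41, Corollary 2.43} (and Proposition 2.15): the lc case is exactly Koll\'ar's finite-cover/\'etale discrepancy comparison, and the slc case is the normalization-plus-conductor reduction you outline. One minor imprecision: for a finite cover that is \'etale only in codimension one, discrepancies of divisorial valuations with small center need not literally agree but satisfy $a(E;Y,\sum a_i f^*D_i)+1=e(E)\bigl(a(E';X,\sum a_i D_i)+1\bigr)$ with $e(E)$ the ramification index, which still preserves the threshold $\geq -1$ and hence the lc equivalence.
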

\noindent The main consequences of Observation \ref{Oss:definition:lc:doesnotdepend:on:covering} are the following:
\begin{itemize}
 \item Definition \ref{Def:slc:stack} does not depend on the choice of the \'etale cover $Y$;
 \item Consider a pair $(\cX,\sum a_i\cD_i)$ with $\cX$ a demi-normal DM stack that is a scheme in codimension 2.
 Let $X$ (resp. $D_i$) be the coarse space of $\cX$ (resp. $\cD_i$). Then
 $(X,\sum a_i D_i)$ is lc (resp. slc) if and only if $(\cX,\sum a_i \cD_i)$ is lc (resp. slc). 
\end{itemize}

\subsection{The different}
In this subsection we recall the definition of different. We refer the reader to \cite{Kollarsingmmp}*{Definition 2.34 and Chapter 4} for further details.

Suppose $(X,D+\Delta)$ is an lc pair, where $D$ is a divisor with coefficient 1 and $\Delta$ is a $\mathbb{Q}$-divisor. If $X$ and $D$ are \emph{smooth},
then the usual adjunction formula gives a canonical isomorphism \begin{equation}\label{eqn:adj}\cO_X(K_X+D+\Delta)\big|_{D}\cong \cO_D(K_{D}+\Delta \big|_D).\end{equation} If either $X$ or $D$ are singular, Equation (\ref{eqn:adj}) may no longer hold. However, there is a canonical correction term given by the \emph{different}. 

More precisely, suppose $(X,D+\Delta)$ is lc and let $\nu:D^{n} \to D$ be the normalization of $D$. 
Since $K_X+D+\Delta$ is $\mathbb{Q}$-Cartier, for $m$ divisible enough one can compare
$(m(K_X+D+\Delta))\big|_{D^n}$ and $m(K_{D^n}+\Delta\big|_{D})$.
Using this, one can define an effective
divisor $\Diff_{D^n}(\Delta)$, such that \begin{equation}\label{eqn:diff1}K_{D^n}+\Delta\big|_{D} + \Diff_{D^n}(\Delta) \sim_\mathbb{Q} (K_X+D+\Delta)\big|_{D^n}.\end{equation}
We will see that $\Diff_{D^n}(\Delta)$ is an actual effective divisor on $D^n$, not just a divisor class.

There are two equivalent definitions for $\Diff_{D^n}(\Delta)$ and both will be useful in the sequel. 

\subsubsection{The different, $1^{st}$ definition} Writing $D$ as a sum of its irreducible components and applying Equation \ref{eqn:diff1} to each component, one can see that it suffices to define $\Diff_{D^n}(\Delta)$ in the case where $D$ is irreducible. Consider then $p:Y \to X$ a log-resolution of $(X,D+\Delta)$, and let $T:=p^{-1}_*D$. Then we have the following commutative diagram.
$$\xymatrix{T \ar[r]^i \ar[d]_{p_T} & Y \ar[d]^p \\ D^n \ar[r]_j &X }$$
On $Y$, we can write $p^*(K_X+D+\Delta)= K_Y+T+F$ where $F$ is a $\mathbb{Q}$-divisor. Then $i^*p^*(K_X+D+\Delta)=K_T+F\big|_{T}$. Now we can define the different as
$$\Diff_{D^n}(\Delta):=(p_T)_*(F\big|_{T}).$$
We remark that this definition only depends on the points of codimension at most 2 on $X$, so it suffices to consider the case in which $X$ is a surface. Then from \cite{Kollarsingmmp}*{Proposition 2.35} the different satisfies the following properties:
\begin{enumerate}
    \item $\Diff_{D^n}(\Delta)$ does not depend on the choice of a log resolution, and
    \item $\Diff_{D^n}(\Delta)$ is an effective divisor.
\end{enumerate}
\subsubsection{The different, $2^{nd}$ definition} One can also define the different using the Poincaré residue map. We review the definition below. See \cite{Kollarsingmmp}*{Definition 4.2} for more details. Let $Z \subseteq X$ to be the union of $\supp \Delta \cap D$ with the closed subset where either $\supp(D)$ or $X$ are singular. Then on $X\smallsetminus Z$ there exists a \emph{canonical} isomorphism $$\cR:\omega_{X}(D)\big|_{(D \smallsetminus Z)}\to \omega_{D \smallsetminus Z}$$
given by the Poincar\'e residue map. For $m$ divisible enough so that $\omega_{X}^{[m]}(m(D+\Delta))$ is Cartier, consider the $m^{th}$ tensor power $\cR^{\otimes m}$. Since the normalization morphism $\nu : D^n \to D$ is an isomorphism on the locus where $D$ is smooth and since $\Delta|_{D \setminus Z} = 0$, $\cR^{\otimes m}$ pulls back to a rational section of
$$\cH om_{D^n}(\nu^*(\omega_{X}^{[m]}(m(D+\Delta))), \omega_{D^n}^{[m]})$$
which we also denote $\cR^{\otimes m}$. Now $\omega_{D^n}^{[m]}$ is Cartier in codimension 1 so the rational section $\cR^{\otimes m}$ defines a divisor $\Delta_m^\circ$ in codimension $1$ on $D^n$ and we denote its closure by $\Delta_m$. Then we can define the different by $$\Diff_{D^n}(D'):=\frac{1}{m}\Delta_m.$$

\subsection{Polarized orbispaces and Koll\'ar families of $\mathbb{Q}$-line bundles} 

In this subsection, we recall the definitions from \cite{AH}. All our stacks are assumed to be of finite type over a field $k$ unless otherwise noted.

\begin{Def}[\cite{AH}*{Definition 2.3.1}]
A \emph{cyclotomic stack} is a separated DM stack $\cX$ such that the stabilizers of the points of $\cX$ are finite cyclic groups.
\end{Def}
An important example of a cyclotomic stack is the \emph{weighted projective stack} $\cP(\rho_1, \ldots, \rho_n)$, defined as the stack quotient $\left[ (\mathbb{A}^n \setminus 0) / \mathbb{G}_m\right]$
where $\mathbb{G}_m$ acts on the $i^{th}$ coordinate of $\mathbb{A}^n$ by weight $\rho_i>0$. Moreover, any closed substack of a cyclotomic stack is cyclotomic. 

We will consider \emph{polarized orbispaces}, which are cyclotomic stacks analogous to projective varieties (see
\cite{AH}*{Definitions 2.3.11, 2.4.1 and 4.1.1}):
\begin{Def}\label{Def:polorbitspace}
 Let $f:\mathcal{X} \to B$ be a flat proper equi-dimensional morphism from a cyclotomic stack to a scheme. Assume that each fiber of $f$ is generically an algebraic space. Let $\pi:\mathcal{X} \to X$ be the coarse moduli space and $f_X:X \to B$ the induced induced map.
 A \emph{polarizing line bundle} is a line bundle $\mathcal{L}$ on $\mathcal{X}$ such that:
 \begin{enumerate}[(i)]
 \item For every geometric point $\xi \in \mathcal{X}(\operatorname{Spec}(K))$, the action of $\operatorname{Aut}(\xi)$ on the fiber of $\mathcal{L}$
 is effective, and
\item There is an $f_X$-ample line bundle $M$ on $X$ and an $N > 0$ such that $\mathcal{L}^N \cong \pi^*M$.
 \end{enumerate}
 A pair $(\mathcal{X} \to B, \mathcal{L})$ as above is a
 \emph{polarized orbispace}.
\end{Def}
\begin{Remark}We do not require the fibers of $f$ to be connected.\end{Remark}

Note that a weighted projective stack with the line bundle $\cO(1)$ is a polarized orbispace and by \cite{AH}*{Corollary 2.4.4}, any polarized
orbispace is a closed substack of a weighted projective stack.

Now one can define a category fibered in groupoids $\cO rb^\cL$ as follows. The objects $|\cO rb^\cL(S)|$ over a scheme $S$ are polarized orbispaces $(\cX \to B, \cL)$.
A morphism of $(\pi:\cX \to B, \cL) \to (\pi':\cX' \to B', \cL')$ over a map $g:B \to B'$ consists of a morphism $f:\cX \to \cX'$ and an isomorphism $\phi:f^*\cL' \to \cL$,  such that the following diagram is cartesian
$$ \xymatrix{\cX \ar[r]_f \ar[d]_\pi & \cX' \ar[d]^{\pi'} \\ B \ar[r]^g & B'.}$$
\begin{Teo}[\cite{AH}*{Proposition 4.2.1}] The stack $\cO rb^\cL$ is algebraic and locally of finite type.
\end{Teo}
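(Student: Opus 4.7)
The plan is to present $\cO rb^\cL$ as a union over discrete invariants of finite-type algebraic stacks, each realized as a quotient of an open substack of a Hilbert stack by an algebraic group, following the strategy of \cite{AH}.

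First, I would stratify $\cO rb^\cL$ by discrete data: the possible stabilizer weights, the integer $N$ from Definition \ref{Def:polorbitspace}(ii), and the Hilbert polynomial of the coarse moduli space $X$ with respect to the ample line bundle $M$ satisfying $\cL^N \cong \pi^*M$. Since locally of finite type is preserved by disjoint unions, it suffices to prove each substack $\cO rb^\cL_{\mathbf{d}}$ for fixed discrete data $\mathbf{d}$ is algebraic and of finite type. By \cite{AH}*{Corollary 2.4.4}, one can choose a single weighted projective stack $\cP = \cP(\mu_1, \ldots, \mu_m)$ large enough that any polarized orbispace $(\cX \to B, \cL)$ with data $\mathbf{d}$ embeds as a closed substack of $B \times \cP$ pulling $\cO_\cP(1)$ back to $\cL$.

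Next, consider the Hilbert stack $\underline{\Hilb}(\cP)$ parametrizing proper flat closed substacks of $\cP$. By a theorem of Olsson--Starr extending Grothendieck's construction to DM stacks, this is algebraic and locally of finite type over $k$. Restricting to the prescribed Hilbert polynomial yields a finite-type open substack $H \subset \underline{\Hilb}(\cP)$. The defining conditions of a polarized orbispace---effectiveness of the stabilizer action on the fiber of $\cO_\cP(1)$, generic representability of the fibers by algebraic spaces, and the condition that $\cO_\cP(1)|_\cX$ have a power pulled back from the coarse moduli space---each cut out an open substack, and their intersection gives $H^\circ \subset H$. The automorphism group $\Aut(\cP)$ is a linear algebraic group acting on $H^\circ$, and the natural map $H^\circ \to \cO rb^\cL_{\mathbf{d}}$ realizes $\cO rb^\cL_{\mathbf{d}} \cong [H^\circ / \Aut(\cP)]$, which is therefore algebraic of finite type.

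The main obstacle is twofold. First, one must establish algebraicity of the Hilbert stack for a proper DM stack target, which is the substantial content of Olsson--Starr and does not follow formally from the scheme case. Second, one must verify that the polarized-orbispace conditions are open on this Hilbert stack; the most delicate of these is effective action of stabilizers on the fiber of $\cO_\cP(1)$, which requires analyzing the relative inertia in families and can be handled by a local computation on an \'etale atlas. With these two ingredients in hand the remainder of the argument is formal.
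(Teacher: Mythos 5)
This theorem is not proved in the paper at all---it is quoted verbatim from Abramovich--Hassett (Proposition 4.2.1)---and your sketch is essentially a reconstruction of their proof: uniform embedding of objects with fixed discrete data into a weighted projective stack $\cP$, algebraicity of the Hilbert functor for DM stacks (Olsson--Starr), openness of the defining conditions, and a presentation of each piece of $\cO rb^\cL$ as a quotient of the resulting locus by a group of coordinate changes. The only quibbles are cosmetic: effectiveness of the stabilizer action on the fiber of $\cO_\cP(1)$ is automatic for closed substacks of $\cP$ (it is the uniformizing property of $\cO(1)$, cf.\ \cite{AH}*{Proposition 2.3.10}), so the genuinely delicate openness checks are elsewhere, and the quotient is more safely taken by the finite-type affine group scheme of graded coordinate changes of the Cox ring rather than by the automorphism $2$-group of $\cP$; neither point affects the strategy.
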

\begin{Remark} Contrary to our conventions, we do not claim that $\cO rb^\cL$ is of finite type. \end{Remark}

For our purposes the relevance of polarized orbispaces lies in their relation with Koll\'ar families of $\mathbb{Q}$-line bundles:
\begin{Def}[\cite{AH}*{Definition 5.2.1}]\label{Def:Koll:family} A \emph{Koll\'ar family of $\mathbb{Q}$-line bundles} is the data of a pair $(f:X \to B,F)$ consisting of a morphism of schemes $f$ and a coherent sheaf $F$ on $X$ satisfying the following conditions:
\begin{enumerate}
    \item $f$ is flat with fibers that are reduced, $S_2$, and of equi-dimension $n$;
    \item For every fiber $X_b$, the restriction $F\big|_{X_b}$ is reflexive of rank 1;
    \item For every $n$ the formation of $F^{[n]}$ commutes with base change for maps $B' \to B$, and
    \item For each $X_b$, there is an $N_b$ divisible enough such that $F^{[N_b]}\big|_{X_b}$ is a line bundle.
\end{enumerate}
\end{Def}
Points (1), (2) and (4) do not pose any major difficulty as they are fiberwise conditions. However, point (3) is difficult to check and is not automatic (see \cite{AK16}). In \cite{AH}, Abramovich and Hassett give a stack theoretic characterization of families satisfying condition (3) which we now review.

First, observe that given a Koll\'ar family of $\mathbb{Q}$-line bundles $(f:X \to B, F)$, one can consider the variety $\cP(F):=\spec_{\cO_X}(\bigoplus_{n\in \mathbb{Z}} F^{[n]})$ lying over $B$. There is a natural action of $\bG_m$ over $B$ induced by the grading and taking the quotient $\cX_F:=[\cP(F)/\bG_m]$
gives a cyclotomic stack which is \emph{flat} over $B$ \cite{AH}*{Proposition 5.1.4}. Moreover, the fibers of $g$ are reduced and $S_2$ with trivial stabilizers in codimension one and there is a line bundle $\cO(1)$ on $\cX_F$ making $(g:\cX_F\to B,\cO(1))$ into a polarized orbispace. Abramovich and Hassett show that this coresponence can be reversed:

\begin{Teo}[\cite{AH}*{Section 5}]\label{Teo:AH} Consider $(f:\cX \to B,\cL)$ a polarized orbispace. Assume that for every $b \in B$ the fiber $\cX_b$ is reduced and $S_2$ with trivial stabilizers in codimension one. Let $p:\cX \to X$ be the coarse moduli space. Then $(X \to B,p_*(\cL))$ is a Koll\'ar family of $\mathbb{Q}$-line bundles.
\end{Teo}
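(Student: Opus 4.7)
The plan is to verify each of the four conditions of Definition \ref{Def:Koll:family} for the pair $(X \to B, p_*\cL)$ by reducing to an étale-local quotient model of $\cX$, in which both the reflexive hulls and their base change behavior become transparent.

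First I would establish the local model. Since $\cX$ is cyclotomic with trivial stabilizers in codimension one on each fiber, étale locally on $X$ we may write $\cX \cong [\spec A / \mu_r]$, where $\spec A^{\mu_r}$ is the corresponding chart of $X$, $A$ is flat over $\cO_B$ and $S_2$ on each fiber, and the $\mu_r$-fixed locus has codimension at least two in every fiber. Axiom (i) of Definition \ref{Def:polorbitspace} forces $\cL$ to be locally the free rank-one $A$-module on which $\mu_r$ acts by a primitive character $\chi$, so that $p_*(\cL^n)$ is identified with the isotypic component $A_{\chi^n}$; by complete reducibility of $\mu_r$-representations in characteristic zero, this is an $A^{\mu_r}$-module direct summand of $A$.

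Conditions (1), (2), and (4) then follow more or less formally. For (1), since the $\mu_r$-action is free in codimension one on each fiber, passage to invariants preserves flatness over $B$ together with the fiberwise properties (reduced, $S_2$, pure dimension), giving them for $X \to B$. Condition (4) is axiom (ii) of Definition \ref{Def:polorbitspace}: for $N$ divisible enough, $\cL^N \cong \pi^*M$ yields $p_*\cL^N \cong M$, a line bundle, which together with the identification in Step 3 below says $(p_*\cL)^{[N]}$ is locally free on each fiber. Condition (2) holds because locally $p_*\cL|_{X_b}$ is the $\chi$-eigenspace of $A|_{X_b}$ over $A^{\mu_r}|_{X_b}$, a rank-one $S_2$ module on an $S_2$ ring, hence reflexive of rank one.

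The main step is condition (3). I would first show $(p_*\cL)^{[n]} = p_*(\cL^n)$: both sheaves agree on the big open locus of $X$ where $p$ is an isomorphism, while étale-locally $p_*(\cL^n) \leftrightarrow A_{\chi^n}$ is a direct summand of the $S_2$ $A^{\mu_r}$-module $A$, and direct summands preserve depth, so $p_*(\cL^n)$ is itself $S_2$. By Lemma \ref{Lemma:unique:arrow:reflexive:hull}, an $S_2$ sheaf that agrees in codimension one with a reflexive sheaf must equal it, yielding the identification. For base change, the isotypic component $A_{\chi^n}$ is flat over $\cO_B$ (being a direct summand of the flat module $A$), and its formation commutes with arbitrary $\cO_B$-algebra extensions because in characteristic zero the $\mu_r$-eigenspace decomposition is preserved under base change. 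The principal obstacle, and the content of the theorem, lies precisely here: reflexive hulls of $\mathbb{Q}$-line bundles on singular families are generally ill-behaved under base change, but the cyclotomic stack $\cX$ globally realizes them as finite pushforwards of an honest line bundle from a flat family, reducing the question to preservation of isotypic components of a finite cyclic action, which is automatic in characteristic zero.
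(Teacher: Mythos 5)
The paper gives no proof of Theorem \ref{Teo:AH}---it is quoted from \cite{AH}*{Section 5}---and your sketch reconstructs essentially that argument: local presentations $[\spec A/\mu_r]$, the identification of $p_*(\cL^{\otimes n})$ with an isotypic summand of $A$, linear reductivity of $\mu_r$ in characteristic $0$ for flatness and base-change compatibility of these summands, and relative $S_2$-ness plus agreement on the big open locus where $p$ is an isomorphism to identify $p_*(\cL^{\otimes n})$ with $(p_*\cL)^{[n]}$. This is correct in outline; the only places needing slightly more care are that reflexivity in condition (2) of Definition \ref{Def:Koll:family} uses invertibility in codimension one on each fiber (not merely ``rank one and $S_2$''), which is available since stabilizers are trivial in codimension one, and that for condition (3) the identification $(p_*\cL)^{[n]}\cong p_*(\cL^{\otimes n})$ must also be run on the pulled-back family over $B'$, whose fibers still satisfy the hypotheses, so that the pullback of $F^{[n]}$ is indeed the reflexive power of the pullback of $F$.
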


In particular, consider a Koll\'ar family of $\mathbb{Q}$-line bundles $(X \to \spec(A),F)$ over a local Artin ring $A$ and let $A' \to A$ be an extension of local Artin rings. Then the deformations of $(X \to \spec(A),F)$ along $\spec(A) \to \spec(A')$ which satisfy the condition (4) of Definition \ref{Def:Koll:family} are identified with the deformations of the polarized orbispace $(g : \cX_F \to \spec(A), \cO(1))$. 

\begin{Def}\label{Def:AH:family} We will say that a polarized orbispace $(\cX \to B, \cL)$ satisfying the assumptions of Theorem \ref{Teo:AH} is an \emph{Abramovich-Hassett (or AH) family}. Given a Koll\'ar family of $\mathbb{Q}$-line bundles $(X \to B,F)$, we will call $(\cX_F \to B,\cO(1))$ the \emph{associated AH family}.
\end{Def}

Finally, \cite{AH} also proves the existence of a locally of finite type (but not necessarily of finite type) algebraic stack which parametrizes \emph{canonically} polarized orbispaces \cite{AH}*{Definition 6.1.1}).

\begin{Def}Following \cite{AH}, we define the moduli space parametrizing AH families of canonically polarized orbispaces with at worst slc singularities by $\cK^\omega_{\text{slc}}$. Furthermore, we denote by
$\cK_{n,v}^\omega \subseteq \cK^\omega_{\text{slc}}$ the substack parametrizing those polarized orbispaces of dimension $n$ and volume $v$.\end{Def}

\section{The moduli space of twisted stable pairs}\label{Section:moduli:TSP}

The goal of this section is to present a definition of a family of stable pairs over an arbitrary base, using polarized orbispaces. We then construct an algebraic stack $\cM_{n,v}$ of these \emph{twisted stable pairs}.

We start with the usual definition of a stable pair:
\begin{Def}
An slc pair $(X,D)$ is a \emph{stable pair} if $K_X+D$ is ample.
\end{Def}

There are two obstacles one has to overcome in order to generalize this to a notion of families of stable pairs. The first is that the $\mathbb{Q}$-divisor $K_X + D$ is only defined up to rational equivalence. Moreover, the divisor $D$ itself is an actual Weil divisor rather than just a divisor class: even when $D$ is Cartier, the condition of the pair being slc is not invariant under linear equivalence. Thus one needs to find a suitable definition for a family of divisors, over an arbitrary base scheme $B$. 

To address the first point, it is natural to consider Koll\'ar families of $\mathbb{Q}$-line bundles $(X\to B,F)$ where $F$ restricts to the reflexive sheaf $\cO(K_X + D)$ along each fiber. As we saw in the previous section, this is equivalent to considering flat families of polarized orbispaces. To address the second point, we follow an idea originally due to Koll\'ar (see \cite{Kol13}*{page 21}) and used by Kov\'acs and Patakfalvi in \cite{KP}: we replace the data of $D$ with the morphism of sheaves $\cO_X(K_X) \to \cO_X(K_X+D)$. 

\subsection{The stack of pairs $\cH$} We begin by defining a category $\cH$ fibered in groupoids over $\mathrm{Sch}/k$, consisting of pairs of a polarized orbispace and a morphism of sheaves as above.

\subsubsection{The objects of $\cH$}\label{sec:obj:H} For every scheme $B$, an object of $\cH(B)$ consists of a pair $(f : \cX \to B, \phi:\omega_{\cX/B} \to \cL)$,
with $(f : \cX \to B, \cL)$ a polarized orbispace and $\phi$ a morphism such that:
\begin{enumerate}
    \item $f : \cX \to B$ is a flat family of equi-dimensional demi-normal stacks, and
    \item for every $b \in B$, $\phi_b$ is an isomorphism at the generic points and codimension one singular points of $\cX_b$;
\end{enumerate}

\begin{Remark} Note that we do not assume that the fibers of $f$ are connected. This will simplify the definition of the moduli of gluing data in Section \ref{Section:gluing}. \end{Remark} 

\subsubsection{The arrows of $\cH$}

Consider two schemes $B_1$ and $B_2$ and a morphism $a:B_1 \to B_2$. Suppose $\alpha:=(f_1:\mathcal{X}_1 \to B_1,\phi_1: \omega_{\mathcal{X}_1/B_1} \to \mathcal{L}_1)$ and $\beta:=(f_2:\mathcal{X}_2\to B_2, \phi_2:\omega_{\mathcal{X}_2} \to \mathcal{L}_2)$ are objects in $\cH(B_1)$ and $\cH(B_2)$ respectively. An arrow $\Phi:\alpha \to \beta$ lying over $a$ is the data of a morphism $(\mu,\nu)$ of the objects $(\cX_1 \to B_1,\cL_1)$ and $(\cX_2 \to B_2,\cL_2)$ of $\cO rb^\cL$ such that following diagram is commutative: 

\begin{center}
$\xymatrix{\ar @{} [dr]
 \mu^*\omega_{\cX_2/B_2}  \ar[d]_{\mu^*(\phi_2)} \ar[r] & \omega_{\cX_1/B_1} \ar[d]^{\phi_1} \\
 \mu^*\mathcal{L}_2 \ar[r]_{\nu} & \mathcal{L}_1 . }$
\end{center}

Here $\mu^*\omega_{\cX_2/B_2} \to \omega_{\cX_1/B_1}$ is a canonical morphism, defined as follows. By Observation \ref{Oss:Gor:locus:pulls:back:to:G:locus}, there is a morphism $U(f_1) \to U(f_2)$. Since $U(f_i) \to B_i$ is Gorenstein, this induces a unique canonical isomorphism (see \cite{Conrad}*{Theorem 3.6.1})

\begin{equation}\label{eqn1:canonical}\mu^*\big|_{U(f_1)}(\omega_{U(f_2)/B_2})
\to \omega_{U(f_1)/B_1}.
\end{equation}
Letting $\iota_j:U(f_j) \to \cX_j$ be the inclusion, we have
$$
\iota_2^*(\omega_{\cX_2/B_2})=\iota_2^*(\iota_{2*}\omega_{U(f_2)/B_2})
\cong \omega_{U(f_2)/B_2},
$$
and so Equation \ref{eqn1:canonical} induces a map
$$
\iota_1^*\mu^*(\omega_{\cX_2/B_2}) \cong \mu^*\big|_{U(f_1)}(\iota_2^*(\omega_{\cX_2/B_2})) \cong \mu^*\big|_{U(f_1)}(\omega_{U(f_2)/B_2}) \to \omega_{U(f_1)/B_1}.
$$
Then the push pull adjunction gives the map $\mu^*(\omega_{\cX_2/B_2}) \to (\iota_1)_*( \omega_{U(f_1)/B_1})= \omega_{\cX_1/B_1}$.

\begin{theorem}
\label{thm:H:algebraic} The category $\cH$ is an algebraic stack locally of finite type. 
\end{theorem}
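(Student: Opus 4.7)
The plan is to realize $\cH$ as an open substack of a relative $\Hom$-stack over a suitable open substack of the Abramovich--Hassett stack $\cO rb^\cL$. Since $\cO rb^\cL$ is already known to be algebraic and locally of finite type, it will suffice to exhibit a forgetful morphism $\cH \to \cO rb^\cL$ which is representable by an algebraic stack of finite type, as $\phi$ is cleanly a datum built on top of the polarized orbispace $(\cX\to B,\cL)$.

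First I would cut out the open substack $\cO rb^\cL_{dn} \subseteq \cO rb^\cL$ parametrizing polarized orbispaces whose geometric fibers are demi-normal, which is condition (1) in the definition of an object of $\cH$. Demi-normality decomposes into the $S_2$ condition plus being nodal in codimension one, both of which are standard open conditions on fibers of a proper flat family of pure dimension (reducing to the scheme case via an \'etale atlas). Over $\cO rb^\cL_{dn}$ one has the universal polarized orbispace $\pi : \sX \to \cO rb^\cL_{dn}$ together with its polarizing line bundle $\cL$ and its relative dualizing sheaf $\omega_\pi$, which is a coherent reflexive sheaf since the fibers are $S_2$ and Gorenstein in codimension one. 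I then form the relative $\Hom$-stack
\[
\mathbf{H} \;:=\; \underline{\Hom}_{\cO rb^\cL_{dn}}(\omega_\pi,\cL).
\]
By the standard theory of $\Hom$-stacks of coherent sheaves on flat proper DM stacks (with target flat over the base, which holds because $\cL$ is invertible), $\mathbf{H}$ is representable by an algebraic stack of finite type over $\cO rb^\cL_{dn}$, hence algebraic and locally of finite type.

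To verify that $\mathbf{H}$ really parametrizes the desired data on an arbitrary base $T$, one must compare the sheaf-theoretic pullback $\mu^*\omega_\pi$ on $\cX_T$ with the relative dualizing sheaf $\omega_{\cX_T/T}$. The canonical comparison map constructed just before the theorem statement is an isomorphism over the fiberwise Gorenstein locus, which is big in each fiber. Since $\cL$ is reflexive, Lemma \ref{Lemma:unique:arrow:reflexive:hull} then provides a natural bijection between morphisms $\mu^*\omega_\pi \to \mu^*\cL$ and morphisms $\omega_{\cX_T/T} \to \cL_T$, so that $\mathbf{H}$ parametrizes exactly the morphisms $\phi$ required by $\cH$. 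Finally I impose the open condition (2): the closed substack of $\sX$ where a given $\phi$ fails to be an isomorphism has lower-semicontinuous fiberwise codimension, and the requirement that it be disjoint from the generic points and the codimension-one singular locus of each geometric fiber is therefore open on $\mathbf{H}$. Cutting out this open locus realizes $\cH$ as an open substack of $\mathbf{H}$, completing the proof.

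The main obstacle is precisely the comparison between $\mu^*\omega_\pi$ and $\omega_{\cX_T/T}$: the relative dualizing sheaf need not commute with arbitrary base change when the fibers are only $S_2$ and not Cohen--Macaulay, and this failure has to be reconciled with the functoriality built into $\cH$ via the canonical map $\mu^*\omega_{\cX_2/B_2}\to\omega_{\cX_1/B_1}$ appearing in the definition of arrows. The remedy is to restrict to the (fiberwise big) Gorenstein locus, where both sheaves are invertible and automatically commute with base change, and to propagate the identification back to $\sX$ using Lemma \ref{Lemma:unique:arrow:reflexive:hull} applied to the reflexive target $\cL$. Once this comparison is in place, the remaining openness and representability statements are formal.
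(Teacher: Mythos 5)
Your proposal is correct and follows essentially the same route as the paper: cut out the open locus of $\cO rb^\cL$ with demi-normal fibers, form the relative Hom-stack $\underline{\Hom}(\omega,\cL)$ over it, identify its $T$-points with pairs $(\cX\to T,\omega_{\cX/T}\to\cL)$ via the big Gorenstein locus and reflexivity (the paper phrases this as factoring through the reflexive hull, you phrase it via Lemma \ref{Lemma:unique:arrow:reflexive:hull}, which amounts to the same thing), and then cut out condition (2) as an open substack using properness of the cokernel support and semicontinuity of fiber dimension. The only difference is one of detail, not substance: the paper additionally passes to coarse spaces to apply semicontinuity, which your sketch elides but which is a routine point.
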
 

\begin{proof} Consider the algebraic stack $\cO rb^\cL$, let $(\sX,\sL) \to \cO rb^\cL$ be the universal polarized orbispace, and assume that $\sX \to \cO rb^\cL$ has relative dimension $n$. The locus $\cO^{(1)} \to \cO rb^\cL$ where the fibers are $S_2$ and reduced is open by \cite{EGAIV}*{Théorème 12.2.4}. Furthermore, the condition of having at worst nodal singularities is open, so there is an open substack $\cO^{(2)} \hookrightarrow \cO^{(1)}$ where the fibers of $\sX^{(2)}:=\sX \times_{\cO rb^\cL} \cO^{(2)} \to \cO^{(2)}$ are demi-normal. 

Now over $\cO^{(2)}$, we can consider the relative canonical sheaf $\omega_{\sX^{(2)}/\cO^{(2)}}$ which is reflexive and equal to $\iota_*(\omega_{U/\cO^{(2)}})$ where $U$ is the relative Gorenstein locus of $\sX^{(2)} \to \cO^{(2)}$. Denoting by $\sL^{(2)}$ the pullback of $\sL$ to $\sX^{(2)}$, we have that the Hom-stack $\cH' := \underline{\Hom}_{\cO^{(2)}}(\omega_{\sX^{(2)}/\cO^{(2)}},\sL^{(2)})$ is algebraic and locally of finite type by \cite{hom}*{Proposition 2.1.3}. We will check that $\cH$ is a substack of $\cH'$ by identifying $\cH'$ with the stack of pairs $(f : \cX \to B, \phi: \omega_{\cX/B} \to \cL)$. That is, for every scheme $B$ there is an equivalence of categories
$$
\operatorname{Hom}(B, \cH') \to \{(f:\cX\to B, \phi : \omega_{\cX/B} \to \cL)\}
$$
where $(\cX \to B, \cL)$ is induced by a morphism $B \to \cO^{(2)}$.

Indeed given a morphism $B \to \cH'$, we can first compose it with the natural map $\cH' \to \cO^{(2)}$ to get a map $B \to \cO^{(2)}$ and thus a family of demi-normal polarized orbispaces $(\cX \to B, \cL)$. Then the universal property of the Hom stack identifies the category of liftings as in the dotted arrow below
$$
\xymatrix{ & \sH \ar[d]\\ B \ar@{.>}[ur] \ar[r]^{\pi}& \cO^{(2)}}
$$
with maps $\phi_0: \pi^*(\omega_{\sX^{(2)}/\cO^{(2)}}) \to \cL$. Now since $\cL$ is a line bundle, $\phi_0$ factors uniquely through the reflexive
hull of the source, so we obtain $\phi_0^{[1]} : \pi^*(\omega_{\sX^{(2)}/\cO^{(2)}})^{[1]} \to \cL$. Notice that $$\pi^*(\omega_{\sX^{(2)}/\cO^{(2)}})^{[1]} \cong \omega_{\cX /B}$$ so setting $\phi:= \phi_0^{[1]}$ gives a pair. We leave it to the reader to check this is an equivalence of categories.

For any $B$ and any object $(f : \cX \to B, \omega_{\cX/B} \to \cL) \in \cH'(B)$, condition (1) is satisfied by construction since $(f : \cX \to B, \cL)$ is an object of $\cO^{(2)}(B)$. We must show that (2) imposes an algebraic condition. In fact, we will show that it cuts out $\cH$ as an open substack of $\cH'$. Let $(\sX^{(3)}, \sL^{(3)})$ denote the pullback of the universal polarized orbispace to $\cH'$ and let $\Psi : \omega_{\sX^{(3)}/\cH'} \to \sL^{(3)}$ be the universal morphism. 

Since $\pi:\sX^{(3)} \to \cH'$ is Gorenstein in codimension one, $\omega_{\pi}$ is a line bundle in codimension one and a morphism of line bundles is an isomorphism if and only if it is surjective. Thus we need to show that requiring $\Psi$ to be surjective at generic points and at the codimension one singular points of the fibers of $\pi$ is an open condition. 

We begin with the generic points. Consider $\sC:=\operatorname{Supp}(\operatorname{Coker}(\Psi))$. Since $\sC \to \sX^{(3)}$ is a closed embedding and $\sX^{(3)} \to \cH'$ is proper, $\cC \to \cH'$ is proper. We need to show that the locus where the fibers of $\cC\to \cH'$ have dimension at most $(n-1)$ is open. Since $\cC$ and $\cH'$ are tame $DM$ stack, and formation of coarse moduli spaces commutes with base change, it suffices to show that the locus where the fibers of the coarse moduli map $C \to H'$ have dimension at most $(n -1)$ is open. This follows from upper-semicontinuity of fiber dimension. 

Finally we consider the codimension one singular points. Let $\sS \subset \sX^{(3)}$ be the singular locus of $\pi: \sX^{(3)} \to \cH'$. It is a closed substack of $\sX^{(3)}$ so that the map to $\cH'$ is proper. Let $\Psi_\sS$ be the restriction of $\Psi$ to $\sS$ and $\cC_S$ be the support of its cokernel. Then it suffices to show that the locus where the fibers of $\cC_S \to \cH'$ have dimension at most $(n-2)$ is open. Since $\cC_S$ is a closed substack of $\sS$, it is proper and the result again follows from upper-semicontinuity of fiber dimension applied to the coarse map. 

Thus $\cH$ is an open substack of the algebraic stack $\cH'$. 
\end{proof} 

\subsection{The family of divisors $\cD$} Next we produce a family of divisors from the data of a pair $(f : \cX \to B, \phi : \omega_{\cX/B} \to \cL) \in \cH(B)$, and study its properties. 

\begin{Lemma}\label{Lemma:phi:gives:an:ideal}
Suppose $(f:\cX \to B, \phi:\omega_{\cX/B} \to \cL)$ is an object of $\cH(B)$. Then the morphism $\phi \otimes \cL^{-1}:\omega_{\cX/B}\otimes \cL^{-1} \to \cO_{\cX}$ is injective.
\end{Lemma}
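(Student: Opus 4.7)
The plan is to reduce the claimed injectivity to injectivity of a map of line bundles on the relative Gorenstein locus, where it becomes a question about associated points. First, since $\cL^{-1}$ is a line bundle and hence $\cO_\cX$-flat, tensoring with it preserves kernels, so it is equivalent to prove that $\phi : \omega_{\cX/B} \to \cL$ is itself injective, which I work with.

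Let $\iota : U \hookrightarrow \cX$ denote the inclusion of the relative Gorenstein locus $U(f)$. The fibers of $f$ are demi-normal and therefore Gorenstein in codimension one, so $U$ is big on each fiber and the identification $\omega_{\cX/B} \cong \iota_*\omega_{U/B}$ recalled immediately after the definition of the relative canonical sheaf applies. This forces any quasicoherent subsheaf of $\omega_{\cX/B}$ whose restriction to $U$ vanishes to be zero, so applying this to $\ker(\phi)$ reduces the problem to showing that $\phi|_U : \omega_{U/B} \to \cL|_U$ is injective. Since both $\omega_{U/B}$ and $\cL|_U$ are line bundles on $U$, this is in turn equivalent to the statement that $\phi|_U$ is nonzero at every associated point of $\cO_U$. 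As $U \to B$ is flat with reduced $S_2$ fibers, the associated points of $\cO_U$ lie over $\Ass(B)$ and project to generic points of the fibers. For any such point $p$ lying over $b \in \Ass(B)$ and mapping to a generic point of $\cX_b$, hypothesis~(2) in the definition of $\cH$ ensures that $\phi_b$ is an isomorphism at $p$; a local computation at the stalk $\cO_{U,p}$ then shows that $\phi|_U$ at $p$ is given by multiplication by an element which is a unit modulo $m_b\cO_{U,p}$, hence nonzero modulo $m_p$.

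The main obstacle I anticipate is bridging the fiberwise hypothesis~(2) to injectivity over an arbitrary, possibly non-reduced, base; this is resolved using the standard description of associated points of a flat sheaf as those points lying over $\Ass(B)$ whose image in the fiber is associated there. Everything else is a formal consequence of the reflexive identification $\omega_{\cX/B} \cong \iota_*\omega_{U/B}$ together with the observation that a map of line bundles is injective if and only if it is nonzero at every associated point.
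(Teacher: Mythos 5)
Your argument is correct and is essentially the paper's own proof: reduce to the relative Gorenstein locus via the identification $\omega_{\cX/B}\cong\iota_*\omega_{U(f)/B}$, where $\phi$ becomes a map of line bundles, and then use the fact that associated points of a module flat over $B$ lie over $\Ass(B)$ and are associated (hence, by reducedness, generic) in their fibers — which is exactly the Matsumura~23.2(ii) input the paper uses — together with hypothesis~(2) at generic points of fibers. The only cosmetic differences are that the paper tensors by $\cL^{-1}$ and phrases the conclusion as a chain of inclusions of zero-divisor sets, and that it first passes to an \'etale atlas to work with a scheme, a step you leave implicit.
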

\begin{proof}
 The statement is étale local so we can replace $\cX$ with an étale cover and assume it is a scheme. The sheaf $\omega_{\cX/B}\otimes \cL^{-1}$ is reflexive by \cite{YN18}*{Proposition 5.6}. Therefore by \cite{HK}*{Proposition 3.5}, if $j:U(f) \to \cX$ is the open embedding of the Gorenstein locus, $$\omega_{\cX/B}\otimes \cL^{-1} \cong j_*j^*(\omega_{\cX/B}\otimes \cL^{-1}) \text{ and }\cO_{\cX} \cong j_*j^*(\cO_X).$$
 This means that for every open subset $V \subseteq \cX$ the restriction morphism $(\omega_{\cX/B}\otimes \cL^{-1})(V) \to
(\omega_{\cX/B}\otimes \cL^{-1})(V \cap U(f)$ is an isomorphism.
Therefore, it suffices to show that $(\omega_{\cX/B}\otimes \cL^{-1})(V \cap U(f)) \to \cO_{\cX}(V \cap U(f))$ is injective. Namely, we can assume without loss of generality that $\omega_{\cX/B}$ is a line bundle.

Now the statement is local so suppose that $B=\spec(A)$, $\cX=\spec(R)$ with $g:\spec(R) \to \spec(A)$ where $R$ is a flat $A$-module, and $\omega_{\cX/B} \otimes \cL^{-1} \cong \cO_{\cX}$ and denote $\phi \otimes \cL^{-1}$ by $\psi$. Then $\psi:R \to R$ is an $R$-module homomorphism which is necessarily multiplication by some $a \in R$. Our goal is to show that $a$ is not a zero divisor.

From the commutativity condition on morphisms and point (2) in the definition of objects of $\cH$, the element $a$ is not a zero divisor when restricted to each fiber of $\cX \to B$.
For any ring $C$ and $C$-module $M$, we will denote by $\Ass_C(M)$ the associated primes of $M$, and by $\Div_C (M)$ the set of zero divisors for M. It is essential now to recall that $\Div_C(M)= \bigcup_{P \in \Ass_C(M)}P$.
Then the following chain of implications finishes the proof:

\begin{align*}
    & a \notin \bigcup_{p \in \spec(A)} \Div _R(R/pR) \text{ }  \Rightarrow\text{ }  a \notin \bigcup_{p \in \spec(A),\text{ } q \in \Ass_R(R/pR)} q \text{ }\Rightarrow \\ &\text{ }\Rightarrow a \notin \bigcup_{p \in \Ass_A(A), \text{ } q \in \Ass_R(R/pR)} q \text{ } \xRightarrow{(\ast)}\text{ } a \notin \bigcup_{q \in \Ass_R(R)}q \text{ }\Rightarrow\text{ } a \notin \Div_R(R),
\end{align*}
where the arrow labelled with $(\ast)$ follows from \cite{Mat89}*{Theorem 23.2 (ii)} and its proof. 
\end{proof}

\begin{Notation}\label{Not:divisor} The ideal sheaf given by taking the image of $\phi \otimes \cL^{-1}$ in Lemma \ref{Lemma:phi:gives:an:ideal} will be denoted by $\cO_{\cX}(-\cD_\phi)$ and the resulting closed substack will be denoted by $\cD_\phi \subseteq \cX $. Furthermore, we will drop the subscript $\phi$ when there is no risk of confusion.
\end{Notation}

\begin{Cor}\label{Cor:D:is:a:FLAT:family}
Let $(f : \cX \to B, \phi : \omega_{\cX/B} \to \cL)$ be an object of $\cH(B)$ and suppose that for each $b \in B$, $\cX_b$ is Du Bois. Then $\cD_\phi \to B$ is flat and commutes with basechange. 
\end{Cor}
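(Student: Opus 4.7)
The plan is to realize $\cD_\phi$ as the cokernel in the short exact sequence from Lemma~\ref{Lemma:phi:gives:an:ideal},
$$0 \to \omega_{\cX/B} \otimes \cL^{-1} \xrightarrow{\phi \otimes \cL^{-1}} \cO_{\cX} \to \cO_{\cD_\phi} \to 0,$$
and then reduce flatness over $B$ to a fiberwise injectivity statement via the local criterion of flatness. Since $\cO_{\cX}$ is $B$-flat, the long exact $\Tor$ sequence shows that $\cO_{\cD_\phi}$ is $B$-flat if and only if, for every $b \in B$, the restriction
$$(\omega_{\cX/B} \otimes \cL^{-1}) \otimes_{\cO_B} k(b) \to \cO_{\cX_b}$$
remains injective, equivalently the pullback $\phi_b \otimes \cL_b^{-1}$ of $\phi \otimes \cL^{-1}$ to $\cX_b$ is injective.

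The key input is that, under the Du Bois hypothesis on fibers, the formation of $\omega_{\cX/B}$ commutes with base change, so that $(\omega_{\cX/B})|_{\cX_b} \cong \omega_{\cX_b}$ (the Du Bois property forces the derived restriction of $\omega_f^{\dotr}$ to agree with the honest dualizing sheaf in top degree; see e.g.\ the results of Koll\'ar--Kov\'acs on Du Bois deformation theory). With this identification, the restriction of $\phi \otimes \cL^{-1}$ to $\cX_b$ becomes $\phi_b \otimes \cL_b^{-1} : \omega_{\cX_b} \otimes \cL_b^{-1} \to \cO_{\cX_b}$, and $\phi_b$ is an isomorphism at the generic and codimension one singular points of $\cX_b$ by condition (2) in \S\ref{sec:obj:H}. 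Hence $(\cX_b,\phi_b)$ is itself an object of $\cH(\spec k(b))$, so Lemma~\ref{Lemma:phi:gives:an:ideal} applied on the fiber yields the required injectivity. This gives flatness of $\cD_\phi \to B$.

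Compatibility with base change then follows formally. For any $u : B' \to B$ with pullback $u_\cX : \cX' \to \cX$, flatness of $\cO_{\cD_\phi}$ makes $u_\cX^*$ exact on the short exact sequence above; Du Bois base change identifies $u_\cX^*\omega_{\cX/B}$ with $\omega_{\cX'/B'}$ and $u_\cX^*\phi$ with $\phi'$, so $u_\cX^*\cO_{\cD_\phi}$ is the cokernel of $\phi' \otimes (\cL')^{-1}$, namely $\cO_{\cD_{\phi'}}$. Thus $\cD_\phi \times_B B' = \cD_{\phi'}$, as required.

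The main obstacle is invoking the correct Du Bois base change statement for $\omega_{\cX/B}$ in this generality (flat family over an arbitrary Noetherian base with $S_2$ Du Bois fibers, on a tame DM stack); the remainder of the argument is routine manipulation of the local criterion of flatness together with the fiberwise application of Lemma~\ref{Lemma:phi:gives:an:ideal}.
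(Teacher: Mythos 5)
Your proposal is correct and follows essentially the same route as the paper: reduce flatness to injectivity after restriction via the $\operatorname{Tor}$ sequence for $0 \to \omega_{\cX/B}\otimes\cL^{-1} \to \cO_\cX \to \cO_{\cD_\phi} \to 0$, invoke Koll\'ar--Kov\'acs (the paper cites \cite{KK18}*{Corollary 1.5}) for base change of the relative canonical sheaf under the Du Bois hypothesis, and apply Lemma~\ref{Lemma:phi:gives:an:ideal} to the pulled-back object of $\cH$. The only cosmetic difference is that the paper runs the argument once for an arbitrary base change $B' \to B$, whereas you first treat fibers and then deduce general base change formally.
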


\begin{proof} We will drop the subscript $\phi$ in $\cD_\phi$. Let $B' \to B$ be a morphism and consider the pullback $(\cX' \to B', \phi':\omega_{\cX'/B'} \to \cL')$. Let $g:\cX' \to \cX$ be the resulting morphism.

By the exact sequence of $\operatorname{Tor}$ and using that $\cX \to B$ is flat, it suffices to show that the morphism $g^*(\cO_{\cX}(-\cD)) \to g^*\cO_{\cX} \cong \cO_{\cX'}$ is injective. Since the fibers are Du Bois, the relative canonical sheaf commutes with base change by \cite{KK18}*{Corollary 1.5}, so $g^*(\omega_{\cX/B}) \cong \omega_{\cX'/B'}$. Thus
$$
g^*(\cO_{\cX}(-\cD)) \cong g^*(\omega_{\cX/B} \otimes \cL^{-1})\cong g^*(\omega_{\cX/B}) \otimes g^*(\cL)^{-1} \cong \omega_{\cX'/B'} \otimes (\cL')^{-1}.
$$
By the commutativity condition in the definition of $\cH$, the composition $\omega_{\cX'/B'} \otimes (\cL')^{-1} \cong g^*(\cO_{\cX}(-\cD)) \to \cO_{\cX'}$ is the map $\phi' \otimes (\cL')^{-1}$ which is injective by Lemma \ref{Lemma:phi:gives:an:ideal} and identifies $g^*(\cO_{\cX}(-\cD))$ with $\cO_{\cX'}(-\cD')$. 
\end{proof}

\subsection{The stack $\cM_{n,v}$} We are now ready to introduce the stack of twisted stable pairs $\mathcal{M}_{n,v}$ as a category fibered in groupoids over $\mathrm{Sch}/k$. 

\begin{Def}\label{Def:moduli:functor}  For $B$ a scheme, an object of $\mathcal{M}_{n,v}(B)$ consists of a pair $(f:\cX \to B, \phi:\omega_{\mathcal{X}/B} \to \mathcal{L}) \in \cH(B)$ such that 
\begin{enumerate}
    \item for every $b \in B$, $(\cX_b,\cD_b)$ is slc, and
    \item for every $b \in B$, the volume of $(\cX_b,\cD_b)$ is $v$.
\end{enumerate}
Morphisms are given by morphisms in $\cH$. We will call an object of $\cM_{n,v}$ over $B$, a \emph{twisted stable pair} over $B$.
\end{Def}

\begin{Remark} Despite the fact that $\cM_{n,v}$ is defined as a full subcategory of $\cH$, it is not immediate that it is a substack. The issue is that condition (1) in Definition \ref{Def:moduli:functor} is not compatible with base change a priori. However, by \cite{Kollarsingmmp}*{Theorem 5.14} the fibers
$\cX_b$ of a twisted stable pair are Du Bois. Therefore, by Corollary \ref{Cor:D:is:a:FLAT:family}, $\cD$ is flat and commutes with base change so $\cM_{n,v}$ is a category fibered in groupoids. 
\end{Remark} 

\begin{Teo}\label{Teo:we:have:alg:stack}The stack $\cM_{n,v}$ is algebraic and locally of finite type. 
\end{Teo}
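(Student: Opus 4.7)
The plan is to exhibit $\cM_{n,v}$ as a locally closed substack of the algebraic stack $\cH$ already constructed in Theorem \ref{thm:H:algebraic}; since $\cH$ is algebraic and locally of finite type, so will be any such substack. It therefore suffices to show that the two fiberwise conditions appearing in Definition \ref{Def:moduli:functor}, namely that $(\cX_b,\cD_b)$ be slc and that its volume equal $v$, cut out $\cM_{n,v}$ inside $\cH$ as a locally closed substack.

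For the volume, given $(f:\cX\to B,\phi:\omega_{\cX/B}\to\cL)\in\cH(B)$, the injection $\phi\otimes\cL^{-1}:\omega_{\cX/B}\otimes\cL^{-1}\hookrightarrow\cO_\cX$ from Lemma \ref{Lemma:phi:gives:an:ideal} shows that $\cL\cong\omega_{\cX/B}(\cD_\phi)$ as reflexive sheaves; in particular on each fiber the class $c_1(\cL_b)$ represents $K_{\cX_b}+\cD_b$, and since stabilizers are trivial in codimension one the volume of the pair on the coarse moduli space equals the top self-intersection of $\cL_b$. By flatness and properness of $\cX\to B$ this intersection number is locally constant in $b$, so the volume $=v$ condition cuts out an open and closed substack of $\cH$. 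For the slc condition, I would invoke Koll\'ar's theorem \cite{Kollarsingmmp}*{Theorem 5.14} that any slc fiber is Du Bois; on the locus where all fibers of $\cH$ are slc, Corollary \ref{Cor:D:is:a:FLAT:family} then guarantees that $\cD_\phi\to B$ is flat and commutes with base change. Passing to an \'etale atlas of $\cX$ by a scheme and applying Observation \ref{Oss:definition:lc:doesnotdepend:on:covering}, the openness of slc in our setting reduces to the classical openness of slc for flat families $(Y,\Delta)\to B$ of schemes with $\Delta$ reduced and flat over $B$ and $K_Y+\Delta$ $\mathbb{Q}$-Cartier commuting with base change (which here comes for free from the line bundle $\cL$ on the stack).

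Intersecting the open substack cut out by the slc condition with the open-and-closed locus cut out by the volume condition produces $\cM_{n,v}$ as a locally closed substack of $\cH$, completing the argument. The main technical subtlety I anticipate is the mildly circular-looking step where we need flatness of $\cD_\phi$ in order to even talk about fiberwise slc in flat families, but only know $\cD_\phi$ is a closed substack a priori; the Du Bois consequence of slc combined with Corollary \ref{Cor:D:is:a:FLAT:family} is precisely what breaks this deadlock, and setting up the bootstrap so that it applies uniformly in families is the main point of the proof.
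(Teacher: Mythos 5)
Your overall strategy coincides with the paper's: realize $\cM_{n,v}$ inside the algebraic stack $\cH$ of Theorem \ref{thm:H:algebraic}, check that the two fiberwise conditions of Definition \ref{Def:moduli:functor} cut out a substack (in fact an open one), and use slc $\Rightarrow$ Du Bois together with Corollary \ref{Cor:D:is:a:FLAT:family} to break the circularity about flatness of $\cD_\phi$ (the paper implements this by first passing to the open locus of $\cH$ where $\cD_\phi \to B$ is flat with reduced fibers, via the EGA openness results). The essential difference is that you treat openness of the slc condition as a citation to ``classical openness of slc,'' whereas this is exactly the step the paper proves: it shows the slc locus in the base is constructible (take a log resolution of the generic fiber, which requires functorial resolution to make sense for DM stacks, spread it out to a simultaneous log resolution over an open subset, and impose $d_i \ge -1$ on the discrepancies, a constructible condition by Chevalley) and stable under generalization (over a DVR, pass to an \'etale cover and apply inversion of adjunction, \cite{Pat16}*{Corollary 2.11}). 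The sources the paper points to (\cite{kol1}*{Lemma 4.48}, \cite{AH}*{Lemma A.2.1}, \cite{KP}*{Lemma 5.10}) provide these ingredients separately, not an off-the-shelf openness statement in the generality you invoke; moreover your reduction to an \'etale atlas leaves a step implicit, since the atlas is not proper over $B$: one must first show the non-slc locus is closed in $\cX$ (an \'etale-local statement) and then push it forward along the proper map $\cX \to B$ to get a closed locus in $B$. So a complete writeup along your lines would end up reproving essentially the paper's two-step argument rather than quoting it.

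Two smaller corrections. For the volume condition, triviality of stabilizers in codimension one is not part of the definition of $\cM_{n,v}$ (that is the Koll\'ar component $\cK_{n,v}$ of Definition \ref{Def:KSBA:component}), so you cannot invoke it; and local constancy of the top self-intersection of $\cL_b$ in a flat proper family of DM stacks itself needs justification. The paper supplies it by restricting to a connected finite-type open substack, descending a power $\cL^{\otimes m}$ to a line bundle $L$ on the relative coarse space, and using flatness of the coarse family to conclude that $(L\big|_{X_b})^{n}$ is locally constant; your conclusion (open and closed on each such piece) is correct once this is added.
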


\begin{proof} Since the stack of pairs  $\cH$ is algebraic by Theorem \ref{thm:H:algebraic}, it suffices to show that $\cM_{n,v}$ is an open substack. Thus we will check that (1) and (2) in Definition \ref{Def:moduli:functor} are open conditions. 

\emph{Condition (1).} Consider a family of pairs $(f:\cX\to B, \cL, \phi:\omega_{\mathcal{X}/B} \to \mathcal{L}) \in \cH$.  We need to show that the locus in $B$ where $(\cX_b, \cD_b)$ is slc is open. It suffices then to show that it is constructible, and stable under generalization. Our proof is
inspired by \cite{kol1}*{Lemma 4.48} and \cite{AH}*{Lemma A.2.1}. See also \cite{KP}*{Lemma 5.10}. 

We begin with some initial reductions. By Corollary \ref{Cor:D:is:a:FLAT:family}, $\cD \to B$ is flat for objects of $\cM_{n,v}$ so $\cM_{n,v}$ is contained in the substack $\cH_1 \subset \cH$ where $\cD \to B$ is flat. By \cite{EGAIV}*{Theorem 11.3.1}, $\cH_1 \subset \cH$ is open. Furthermore, $\cM_{n,v}$ is contained in the substack $\cH_2 \subset \cH_1$ where the fibers of $\cD \to B$ are reduced. Since $\cD \to B$ is flat and proper, then $\cH_2 \subset \cH_1$ is open by \cite{EGAIV}*{Theorem 12.2.1}. Thus without loss of generality, we may suppose that $\cD \to B$ is flat with reduced fibers. 

Next we show that the locus in $B$ where $(\cX_b, \cD_b)$ is slc is constructible. For that, we can assume that $B$ is irreducible. Let $\eta$ be the generic point of $B$ and consider the generic fiber $(\cX_\eta,\omega_{\cX_\eta/\eta} \to \cL_\eta)$. Let $\cY_\eta \to \cX_\eta$ of $(\cX_\eta,\cD_\eta)$ be a log-resolution, which exists for DM stacks by functorial resolution of singularities (see for example \cite{Wlo05}). Since $B$ is locally of finite type over a field,  this resolution can be spread out to give a simultaneous log-resolution $\pi:\cY \to \cX\big|_{U}$ of $(\cX\big|_{U},\cD\big|_{U})$ for a suitable open subset $U \subseteq B$ using the method of \cite{kol1}*{Lemma 4.48}.
Then we have $\omega_{\cY/U} \otimes \cO_\cY(\pi_*^{-1}(\cD\big|_{U}))\cong \pi^*(\cL\big|_{U}) \otimes \cO_{\cY}(\sum d_i \Delta_i) $ for appropriate irreducible $\pi$-exceptional
divisors $\Delta_i$ and coefficients $d_i$. 
Then for every $b \in U$, the pair $(\cX_b,\cD_b)$ is slc if and only if $d_i \ge -1$ for every $i$ which is a constructible condition by Chevalley's Theorem. 

To show that being slc is stable under generalization, we proceed as in \cite{KP}*{Lemma 5.10}. Consider a family of demi-normal orbifolds $\cX \to \spec(R)$ over a DVR $R$ and assume we have a divisor $\cD$ such that $K_\cX+\cD$ is $\mathbb{Q}$-Cartier. After passing to an \'etale cover, we may assume $(\cX, \cD)$ are schemes. If the closed fiber $(\cX_p,\cD_p)$ is slc, then by inversion of adjunction (\cite{Pat16}*{Corollary 2.11}) we conclude that the pair $(\cX,\cD) $ is slc so the condition is stable under generalization. 

\emph{Condition (2).} We need to show that the locus in $\cH$ where the fibers have volume $v$ is open for every integer $v$. Note that since $\cL$ is a line bundle, the volume must be integral. Since $\cH$ is locally of finite type, we can restrict to an open connected substack $\cH_0$ of finite type. Let $(\sX_0, \sL_0) \to \cH_0$ be the pullback of the universal polarized orbispace and let $\pi: \sX_0 \to X_0$ be the relative coarse moduli space of $X_0 \to \cH_0$. Since $X_0$ is of finite type, there exists an $m$ such that $\sL_0^{\otimes m} = \pi^*L$ for some line bundle $L$ on $X_0$. It suffices to know that the self intersection $(L\big|_{(X_0)_b})^{n}$ is locally constant, which follows from the flatness of $X_0 \to \cH_0$.
\end{proof}

Observe that the stack $\cM_{n,v}$ comes with a morphism to $\cO rb^\cL$. From \cite{AH}*{Corollary 5.3.7}, there is an open embedding $\cK^\cL \to \cO rb^\cL$ parametrizing cyclotomic stacks which are $S_2$ and have no stabilizers in codimension one.

\begin{Def}\label{Def:KSBA:component}
 Let $\cK_{n,v}:= \cK^\cL \times_{\cO rb^\cL} \cM_{n,v} \to \cM_{n,v}$ be the open substack parametrizing orbispaces which have no stabilizers in codimension one. We call $\cK_{n,v}$ the \emph{Koll\'ar component} of $\cM_{n,v}$ and define the twisted stable pairs parametrized by $\cK_{n,v}$ the \emph{AH twisted stable pairs}. 
 \end{Def}

By Theorem \ref{Teo:AH}, the objects of $\cK_{n,v}$ are the twisted stable pairs $(f : \cX \to B, \phi : \omega_\cX \to \cL)$ such that $(f : \cX \to B, \cL)$ is the associated AH family of a Koll\'ar family of $\bQ$-line bundles. That is, they are twisted stable pairs that are also AH families as in Definition \ref{Def:AH:family}. 
 
\begin{Notation}
Let $\cK_{n,v}^c$ denote the open substack of $\cK_{n,v}$ parametrizing connected twisted stable pairs.
\end{Notation}
\begin{Teo}\label{Teo:the:KSBA:stack:is:proper:DM}
 The stacks $\cK_{n,v}^c$  and $\cK_{n,v}$ are proper  and DM.
\end{Teo}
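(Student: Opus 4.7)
The plan is to deduce both properties from the known proper and Deligne--Mumford character of Kollár's moduli stack of KSBA stable pairs (in the formulation of Kovács--Patakfalvi) via the equivalence of moduli problems provided by Theorem \ref{Teo:AH}. Specifically, combining Theorem \ref{Teo:AH} with Corollary \ref{Cor:D:is:a:FLAT:family}, an object of $\cK_{n,v}$ over a base $B$ is identified with its coarse Kollár family of $\bQ$-line bundles $(X \to B, F)$ together with a flat family of reduced divisors $D \subset X$ such that each fiber $(X_b, D_b)$ is an slc stable pair of volume $v$ and $F|_{X_b} \cong \cO_{X_b}(K_{X_b} + D_b)$. Through this identification, $\cK_{n,v}$ is equivalent to the stack of KSBA stable pairs satisfying Kollár's condition.

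For the Deligne--Mumford property I would check that automorphism groups are finite; unramifiedness is automatic in characteristic zero. Any automorphism of $(\cX, \phi)$ descends to an automorphism of the coarse stable pair $(X, D)$, and since $K_X + D$ is ample the group $\Aut(X, D)$ is finite by a Matsusaka--Mumford style argument for stable pairs. Conversely, because the AH orbifold is canonically reconstructed from the coarse Kollár family by \cite{AH}*{Section 5}, the descent map $\Aut(\cX, \phi) \to \Aut(X, D)$ is injective, forcing $\Aut(\cX, \phi)$ to be finite as well.

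For properness I would verify the three ingredients separately. Finite type follows from boundedness of slc stable pairs of fixed dimension and volume (\cites{HMX, KP}) together with boundedness of the orbifold enhancement, since the orders of the stabilizers are controlled by the local indices of $K_X + D$ on the bounded coarse family. Separatedness reduces, through the equivalence above, to the separatedness of the KSBA moduli stack on the coarse level, since by canonicity of the AH construction the orbifold enhancement and the morphism $\phi$ are uniquely determined from the coarse data. For the existence part of the valuative criterion I would begin with a twisted stable pair $(\cX_\eta, \phi_\eta)$ over the generic point of a DVR, pass to its coarse Kollár family, extend to a Kollár family of KSBA stable pairs over the DVR using properness from \cite{KP}, and then apply the AH construction of Theorem \ref{Teo:AH} to produce an orbifold extension; the morphism $\phi = \omega_\cX \to \cL$ is then forced by the data $F^{[1]} \cong \omega_\cX \otimes \cL^{-1}$ and Lemma \ref{Lemma:unique:arrow:reflexive:hull}, and slc-ness of the central fiber propagates by inversion of adjunction exactly as in the proof of Theorem \ref{Teo:we:have:alg:stack}.

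Finally, the connected variant $\cK_{n,v}^c \subset \cK_{n,v}$ is the locus where the geometric fibers are connected, which is both open and closed, and so it inherits properness and DM-ness from $\cK_{n,v}$. The main obstacle will be in the existence step of the valuative criterion: one must ensure that the AH orbifold attached to the KSBA extension over the DVR has \emph{all} the extra structure of the generic fiber, i.e.\ that the morphism $\phi_\eta$ really extends uniquely and compatibly to $\phi$ over the whole DVR; this is where the combination of canonicity of AH with the reflexive-hull uniqueness of Lemma \ref{Lemma:unique:arrow:reflexive:hull} is essential, rather than any substantially new input.
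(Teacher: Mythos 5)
Your proposal is correct and follows essentially the same route as the paper: finite type from boundedness of stable pairs (\cite{HMX}, \cite{Kol08}), the valuative criterion reduced to the known one-parameter extension and uniqueness results for Koll\'ar families of stable pairs (the paper simply cites \cite{kol1}*{Theorem 2.50 and Proposition 2.76 (2)}) with $\phi$ extended uniquely via Lemma \ref{Lemma:unique:arrow:reflexive:hull}, and the DM property from injectivity of $\Aut(\cX,\phi)\to\Aut(X,D)$ together with finiteness of automorphisms of the coarse stable pair. The only cosmetic difference is that the paper first reduces to the connected stack $\cK_{n,v}^c$, whereas you deduce the connected case afterwards as an open and closed substack of $\cK_{n,v}$; both reductions work.
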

\begin{proof}It suffices to prove the desired claim for $\cK_{n,v}^c$. We divide the proof into three steps.

\emph{$\cK_{n,v}^c$ is of finite type}: This follows from the results \cite{HMX} and \cite{Kol08} (see also \cite{Kol11}). By \cite{HMX}*{Theorem 1.1}, there is a projective morphism of quasi-projective varieties $X \to B$, with a divisor $D \subseteq X$ such that every stable pair of dimension $n$ and volume $v$ appears as $(X_b,D_b)$ for a certain $b \in B$. Up to replacing $B$ with a resolution, we may assume that $B$ is smooth. By taking the flattening stratification, we can assume that $X \to B$ is flat. Using boundedness and proceeding as in \cite{Kol08}*{Corollary 25}, we can stratify further so that $(K_{X_b}+D_b)^{[m]}$ commutes with base change for every $m \in \mathbb{Z}$. In particular, we can consider the associated AH family $(f:\cX \to B, \cL)$. On the locus $\cU$ where $f$ is Gorenstein and has trivial stabilizers, we have a morphism $\omega_{f|_{\cU}} \to \cL\big|_{\cU}$ which extends uniquely to a morphism $\omega_f \to \cL$ by Lemma \ref{Lemma:unique:arrow:reflexive:hull}. Since every point of $\cK_{n,v}^c$ corresponds to the AH stack of a fiber of $X \to B$ and $B$ is of finite type, it follows that $\cK_{n,v}^c$ is as well.

\emph{$\cK_{n,v}^c$ is proper}: The valuative criterion for properness now follows from \cite{kol1}*{Theorem 2.50 and Proposition 2.76 (2)}.

\emph{$\cK_{n,v}^c$ is DM}: Consider an AH twisted stable pair $(\cX \to \spec(k),\omega_\cX \to \cL)$, and let $X$ be the coarse space of $\cX$. Since $\cX$ has no generic stabilizers, an automorphism of $\cX$ which induces the identity on $X$ is the identity. So it suffices to know that the pair $(X,D)$ has finite automorphisms where $D=\cD_\phi^c$. This is \cite{KP}*{Proposition 5.5}. 
\end{proof}

\section{Local and global structure of twisted stable pairs}\label{Section:local:and:global:structure:of:TSP}
The goal of this section is to study the twisted stable pairs of Definition \ref{Def:moduli:functor}.
The section is divided into two subsection. In the first one we focus on the properties of an AH family, and we give conditions that are equivalent to having a morphism to an AH family. In the second section we study the local structure of a twisted stable pair in a neighbourhood of the divisor $\cD$. By a careful analysis of the case that $\cX$ is a surface, we show that the the only contribution to the different on a twisted stable pair comes from the double locus of $\cD$ (see Proposition \ref{Prop:no:different:on:stack} and Corollary \ref{Cor:different:only:from:double:locus:arbitrary:dim}).

\subsection{Global structure of AH families} In this subsection, we study the following questions. Given an AH family $\cX \to B$ over a scheme $B$ with coarse space $X$ and a morphism $f:\cY \to X$ from a DM stack $\cY$, then: \begin{enumerate}
    \item when can we lift $f$ to a map $\cY \to \cX$;
    \item when is such a lift to be an isomorphism? 
\end{enumerate} 
Among other things, this will allow us to relate our definition of a family of twisted stable pairs over the spectrum of an Artin ring with the $\mathbb{Q}$-Gorenstein deformations of Hacking in \cite{Hac04} (see Corollary \ref{Cor:comparison:Hacking}). 

\begin{Prop}\label{Prop:functor:morphism:to:the:AHstack}
Suppose $(X\to B,F)$ is a Koll\'ar family of $\mathbb{Q}$-line bundles with associated AH family $(\cX\to B,\cL)$. Let $p:\cX \to X$ be the coarse moduli space map and consider a morphism $f:Y \to X$ from a scheme $Y$. Then the groupoid $\Hom_X(Y,\cX)$ of maps lifting $f$ is equivalent to the following groupoid:
\begin{itemize}
    \item objects are given by pairs $(\cG,\phi)$ where $\cG$ is a line bundle on $Y$, and 
    $\phi:\bigoplus_{n \in \mathbb{Z}} f^*(F^{[n]}) \to \bigoplus_{n \in \mathbb{Z}} \cG^{\otimes n}$ is a homomorphism of graded $\cO_Y$-algebras, and
    \item morphisms between $(\cG_1,\phi_1) \to (\cG_2,\phi_2)$ are given by an isomorphism of line bundles $\psi:\cG_1 \to \cG_2$ such that $\phi_2= (\bigoplus_n \psi^{\otimes n}) \circ \phi_1$.
\end{itemize}

\end{Prop}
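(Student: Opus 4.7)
The proof is essentially a formal manipulation using the quotient stack description $\cX = [\cP(F)/\mathbb{G}_m]$, where $\cP(F) = \spec_{\cO_X}(\bigoplus_{n \in \mathbb{Z}} F^{[n]})$, together with the universal properties of the relative $\spec$ and of quotient stacks. My plan is to unwind the data of a lift in three steps.

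First, by the very definition of $\cX$ as a quotient stack, giving a morphism $Y \to \cX$ lifting $f : Y \to X$ is the same as giving a $\mathbb{G}_m$-torsor $T \to Y$ together with a $\mathbb{G}_m$-equivariant morphism $T \to \cP(F)$ of $X$-schemes. Second, I would translate each of these pieces into the language of line bundles and graded algebras. A $\mathbb{G}_m$-torsor on $Y$ is the same as a line bundle $\cG$ on $Y$, via $T = \spec_Y\bigl(\bigoplus_{n \in \mathbb{Z}} \cG^{\otimes n}\bigr)$, with the $\mathbb{G}_m$-action corresponding to the $\mathbb{Z}$-grading. Since $\cP(F) \to X$ is affine, an $X$-morphism $T \to \cP(F)$ corresponds to a morphism of $\cO_X$-algebras $\bigoplus_n F^{[n]} \to f_*\bigl(\bigoplus_n \cG^{\otimes n}\bigr)$; $\mathbb{G}_m$-equivariance is precisely the condition that this algebra map respect the gradings. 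Finally, by the adjunction $f^* \dashv f_*$, such graded $\cO_X$-algebra maps are in bijection with graded $\cO_Y$-algebra maps $\bigoplus_n f^*(F^{[n]}) \to \bigoplus_n \cG^{\otimes n}$, giving the desired equivalence on objects.

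For the morphism part of the groupoid, a $2$-isomorphism between two lifts $Y \to \cX$ over $X$ amounts to an isomorphism of the underlying $\mathbb{G}_m$-torsors $T_1 \to T_2$ commuting with the equivariant maps to $\cP(F)$. The first piece is exactly an isomorphism of line bundles $\psi : \cG_1 \to \cG_2$, and when this is translated through the $\spec_Y$ and the adjunction used above, the compatibility with the equivariant maps becomes the relation $\phi_2 = \bigl(\bigoplus_n \psi^{\otimes n}\bigr)\circ \phi_1$. Checking that these translations are mutually inverse and functorial in $Y$ is then a straightforward diagram chase.

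The only real subtlety I anticipate is pinning down the sign/weight conventions so that the $\mathbb{G}_m$-action on $T$ arising from the $\mathbb{Z}$-grading on $\bigoplus_n \cG^{\otimes n}$ matches the action on $\cP(F)$ used in \cite{AH} to define $\cX_F$; once this convention is fixed consistently, each of the three bijections above is formal, so the composite bijection between $\Hom_X(Y,\cX)$ and the groupoid of pairs $(\cG,\phi)$ follows.
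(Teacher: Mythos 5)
Your proposal is correct and follows essentially the same route as the paper: describe $\cX=[\cP(F)/\mathbb{G}_m]$, identify a lift with a $\mathbb{G}_m$-torsor $T\to Y$ (equivalently a line bundle $\cG$) plus an equivariant map to $\cP(F)$, and translate equivariant maps into graded $\cO_Y$-algebra homomorphisms $\bigoplus_n f^*(F^{[n]})\to\bigoplus_n\cG^{\otimes n}$. The only cosmetic difference is that the paper base-changes $\cP(F)$ to $Y$ (so the translation happens over $Y$ directly) whereas you pass through $f_*$ and the adjunction $f^*\dashv f_*$, which amounts to the same formal step; you also spell out the morphism-level comparison that the paper leaves implicit.
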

\begin{proof}
 Recall that, as a stack over $X$, we have $\cX=[\cP(F)/\mathbb{G}_m]=[\spec_{\cO_X}(\bigoplus_{n \in \mathbb{Z}} \cF^{[n]})/\mathbb{G}_m]$. Therefore for a scheme $Y$ over $X$, the groupoid $\cX(Y)$ is equivalent to the groupoid of $\mathbb{G}_m$-torsors $E \to Y$, with a $\mathbb{G}_m$-equivariant morphism $E \to \cP(F)$ over $X$. This is equivalent to a $\mathbb{G}_m$-equivariant morphism $E \to \cP(F)\times _X Y \cong \spec(\bigoplus_{n \in \mathbb{Z}} f^*(F^{[n]}))$. There exsts a unique line bundle $\cG$ on $Y$ such that the $\mathbb{G}_m$-torsor $E \to Y$ may be written as $\spec(\bigoplus_{n \in \mathbb{Z}} \cG^{\otimes n}))$. To conclude the proof it suffices to notice that $\mathbb{G}_m$-equivariant morphisms $$\spec(\bigoplus_{n \in \mathbb{Z}} \cG^{\otimes n}))\to \spec(\bigoplus_{n \in \mathbb{Z}} f^*(F^{[n]}))$$ correspond to graded $\cO_Y$-algebra homomorphisms $\bigoplus_{n \in \mathbb{Z}} f^*(F^{[n]}) \to \bigoplus_{n \in \mathbb{Z}} \cG^{\otimes n}$.
\end{proof}

\begin{Cor}\label{Cor:particular:case:lemma:functor:morphism:to:the:AHstack} Let $(p:X\to B,F)$ a Koll\'ar family of $\mathbb{Q}$-line bundles with associated AH family $(\cX\to B,\cL)$. Consider a DM stack $\cY$ with a morphism $f:\cY \to X$. Assume that there is a line bundle $\cG$ on $\cY$ and an isomorphism of graded $\cO_X$-algebras $\bigoplus_{n\in \mathbb{Z}} f_*(\cG^{\otimes n}) \cong \bigoplus_{n\in \mathbb{Z}} F^{[n]}$.
Then there is a morphism $g:\cY \to \cX$ over $X$ such that the following diagram is cartesian.
$$ \xymatrix{\cP(\cG) \ar[r] \ar[d] & \cP(F) \ar[d] \\ \cY \ar[r] & \cX}$$
\end{Cor}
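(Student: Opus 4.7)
The plan is to reduce to Proposition~\ref{Prop:functor:morphism:to:the:AHstack} by constructing, out of the given isomorphism of graded $\cO_X$-algebras, a homomorphism of graded $\cO_\cY$-algebras $\phi : \bigoplus_n f^*(F^{[n]}) \to \bigoplus_n \cG^{\otimes n}$. First I would observe that, although Proposition~\ref{Prop:functor:morphism:to:the:AHstack} is stated only for $Y$ a scheme, its proof uses nothing about $Y$ beyond the quotient presentation $\cX = [\cP(F)/\bG_m]$, so it extends verbatim to the case where the source is a DM stack $\cY$. Concretely, morphisms $\cY \to \cX$ over $X$ are in bijection with pairs $(\cG,\phi)$ as above, with the line bundle $\cG$ corresponding to the $\bG_m$-torsor classifying the lift.

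To produce $\phi$ from the given data, I would apply $f^*$ to the isomorphism $\bigoplus_n f_*(\cG^{\otimes n}) \cong \bigoplus_n F^{[n]}$ and then postcompose with the counit $\epsilon : f^*f_* \to \mathrm{id}$ of the adjunction:
\[
\phi : \bigoplus_n f^*(F^{[n]}) \xrightarrow{\sim} \bigoplus_n f^*f_*(\cG^{\otimes n}) \xrightarrow{\epsilon} \bigoplus_n \cG^{\otimes n}.
\]
Both $f^*$ and $\epsilon$ are symmetric monoidal, so $\phi$ is a morphism of graded $\cO_\cY$-algebras. Feeding $(\cG,\phi)$ into the (extended) Proposition~\ref{Prop:functor:morphism:to:the:AHstack} then yields the desired lift $g : \cY \to \cX$ of $f$.

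For the cartesian square, note that $\cP(F) \to \cX$ is by construction the universal $\bG_m$-torsor for the presentation $\cX = [\cP(F)/\bG_m]$. Hence its pullback along $g$ is the $\bG_m$-torsor on $\cY$ classified by $g$, and unwinding the correspondence established in (the extended) Proposition~\ref{Prop:functor:morphism:to:the:AHstack} shows that this torsor is exactly $\spec_{\cO_\cY}(\bigoplus_n \cG^{\otimes n}) = \cP(\cG)$. This identifies $\cP(\cG)$ with $\cP(F) \times_\cX \cY$, as required. I do not foresee any real obstacle: the construction of $\phi$ is a formal adjunction step and the cartesian square drops out of the quotient presentation of $\cX$. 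The only point worth double-checking is that Proposition~\ref{Prop:functor:morphism:to:the:AHstack} truly goes through with a DM-stack source, but its proof only invokes the description of $\cX$ as $[\cP(F)/\bG_m]$ together with the universal property of the relative spectrum, both of which are stack-local statements.
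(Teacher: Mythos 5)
Your proposal is correct and follows essentially the same route as the paper: the paper likewise uses the adjunction $\Hom(f^*\bigoplus_n F^{[n]},\bigoplus_n\cG^{\otimes n})\cong\Hom(\bigoplus_n F^{[n]},\bigoplus_n f_*(\cG^{\otimes n}))$ to turn the hypothesis into a $\bG_m$-equivariant map $\cP(\cG)\to\cP(F)$, and then obtains $g$ and the cartesian square from the quotient presentation $\cX=[\cP(F)/\bG_m]$ (citing Olsson, Exercise 10.F). Your only cosmetic difference is routing the argument through a DM-stack version of Proposition \ref{Prop:functor:morphism:to:the:AHstack}, whose proof indeed only uses the description of maps into a quotient stack and so extends as you say.
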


\begin{proof} First observe that
$$
     \Hom \left(f^*\bigoplus_n  F^{[n]}, \bigoplus_n \cG^{\otimes n}\right)\cong\Hom \left(\bigoplus_n F^{[n]},f_* \bigoplus_n \cG^{\otimes n}\right) \cong\Hom \left(\bigoplus_n F^{[n]}, \bigoplus_n f_*(\cG^{\otimes n})\right).
$$
The graded isomorphism in the hypothesis gives an isomorphism of $\mathbb{G}_m$-torsors $$\cP(\cG) \to \spec(f^*(\bigoplus_n  F^{[n]}))\cong \cP(F) \times_X \cY.$$ The resulting morphism $\mathbb{G}_m$-equivariant map $\cP(\cG) \to \cP(F)$ induces $\cY\cong [\cP(\cG)/\mathbb{G}_m] \to [\cP(F)/\mathbb{G}_m] \cong \cX$ and the diagram is cartesian by \cite{Ols16}*{Exercise 10.F}.
\end{proof}

Note that in Corollary \ref{Cor:particular:case:lemma:functor:morphism:to:the:AHstack}, if we assume that $\cY \to X$ is the coarse space, we cannot conclude that the morphism $\cY \to \cX$ is an isomorphism. For example,  consider $\cY=B\bmu_2$ with $\cG=\cO_{\cY}$ and $\cX=X=\spec(k)$ with $\cL=\cO_X$. The problem is that $\cY$ might have more stabilizers than $\cX$. The following lemma shows that this is the only reason for the failure of $\cY \to \cX$ to be an isomorphism.

\begin{Lemma}\label{Lemma:iso:to:AH:stack:if:P(F):scheme}
 In the situation of Corollary \ref{Cor:particular:case:lemma:functor:morphism:to:the:AHstack}, suppose $\cY$ is separated with coarse moduli space $X$. Then the morphism $g:\cY \to \cX$ is the relative coarse moduli space of the map $\cY \to B\bG_m$ induced by $\cG$.
\end{Lemma}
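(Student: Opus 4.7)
The plan is to show that $g : \cY \to \cX$ satisfies the universal property of the relative coarse moduli space of $\cY \to B\bG_m$, where the map to $B\bG_m$ is classified by $\cG$. First, I would observe that $\cX \to B\bG_m$ (classified by $\cL$) is representable: by condition (i) of Definition \ref{Def:polorbitspace}, the automorphism group of any geometric point of $\cX$ acts faithfully on the fiber of $\cL$, so it injects into $\bG_m$. The cartesian square in Corollary \ref{Cor:particular:case:lemma:functor:morphism:to:the:AHstack} identifies $\cG$ with $g^{*}\cL$, so $g$ is a morphism over $B\bG_m$. The universal property of the relative coarse moduli space then produces a canonical factorization $\cY \to \overline{\cY} \xrightarrow{\bar g} \cX$ over $B\bG_m$, and the problem reduces to proving $\bar g$ is an isomorphism.

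I would next describe $\overline{\cY}$ concretely using that $B\bG_m$ classifies line bundles. The map $\cY \to B\bG_m$ is classified by the $\bG_m$-torsor $\cP(\cG) = \spec_\cY \bigoplus_{n \in \bZ} \cG^{\otimes n} \to \cY$, and I would argue that $\overline{\cY} \cong [Q / \bG_m]$, where $Q$ denotes the coarse moduli space of $\cP(\cG)$ as an algebraic space (with its induced $\bG_m$-action). This is verified via the universal property: $[Q/\bG_m]$ is representable over $B\bG_m$ because its classifying torsor $Q$ is an algebraic space, and any factorization $\cY \to \cW$ over $B\bG_m$ with $\cW$ representable corresponds to a $\bG_m$-equivariant map from $\cP(\cG)$ to the classifying torsor of $\cW$, which, the latter being an algebraic space, must factor through the coarse moduli $Q$. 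Since $\cX = [\cP(F)/\bG_m]$ by the very definition of the AH family, it now suffices to produce a $\bG_m$-equivariant isomorphism $Q \cong \cP(F)$ over $X$.

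To compute $Q$, I would use that $\cP(\cG) \to \cY$ is affine and $\cY$ is a tame separated DM stack with coarse space $X$ (so coarse moduli commutes with affine morphisms): $Q \cong \spec_X f_*(\bigoplus_{n} \cG^{\otimes n}) = \spec_X(\bigoplus_{n} f_*\cG^{\otimes n})$. The hypothesized isomorphism $\bigoplus_n f_*\cG^{\otimes n} \cong \bigoplus_n F^{[n]}$ of graded $\cO_X$-algebras then yields $Q \cong \cP(F)$, and because it respects the $\bZ$-grading it automatically identifies the $\bG_m$-actions, which on both sides are given by weights on the graded pieces. This produces the required isomorphism $\overline{\cY} \cong \cX$, and comparing with the construction of $g$ in Corollary \ref{Cor:particular:case:lemma:functor:morphism:to:the:AHstack} (where $\cP(\cG) \to \cP(F)$ is the projection, hence the coarse moduli map under this identification) shows that $\bar g$ is exactly this isomorphism.

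The step I expect to demand the most care is justifying the two structural claims: (a) the relative coarse moduli of a morphism to $B\bG_m$ is the $\bG_m$-quotient of the coarse moduli of the classifying torsor, and (b) for a tame separated DM stack $\cY$ with coarse space $X$ and an affine morphism $\cP \to \cY$, the coarse moduli of $\cP$ is $\spec_X f_*\cO_\cP$. Both are standard in the theory of tame stacks (following Abramovich--Olsson--Vistoli and Keel--Mori in characteristic zero), but they are not explicitly cited in the excerpt and would need either a careful local argument on an étale atlas $[U/H] \to \cY$ or a precise reference.
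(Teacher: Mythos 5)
Your argument is correct, but it is organized differently from the paper's. The paper first reduces to the case where $\cY \to B\bG_m$ is representable, by passing to the relative coarse space $\cY'$ and noting that the hypotheses (a line bundle $\cG'$ pulling back to $\cG$, with $f'_*(\cG')^{\otimes m} = F^{[m]}$) persist; then, working étale-locally on $X$ via \cite{Ols16}*{Theorem 11.3.1} to write $\cY = [\spec(A)/G]$, it computes that the coarse space of $\cP(\cG) = [\spec(\bigoplus_n M^{\otimes n})/G]$ is $\spec(\bigoplus_n (M^{\otimes n})^G) \cong \cP(F)$, and uses \cite{AH}*{Proposition 2.3.10} (faithfulness of the stabilizer action on fibers of $\cG$, coming from representability of $\cY \to B\bG_m$) to conclude that $\cP(\cG)$ is already an algebraic space, hence equals $\cP(F)$; the cartesian square of Corollary \ref{Cor:particular:case:lemma:functor:morphism:to:the:AHstack} then forces $g$ to be an isomorphism. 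You instead avoid the reduction and the faithfulness ingredient entirely: you give a structural identification of the relative coarse space of $\cY \to B\bG_m$ as $[Q/\bG_m]$, where $Q$ is the (absolute) coarse space of the torsor $\cP(\cG)$, and then identify $Q$ with $\cP(F)$ via $Q \cong \spec_X f_*(\bigoplus_n \cG^{\otimes n})$ and the hypothesized graded isomorphism. The underlying local computation is the same one the paper performs (invariants of $\bigoplus_n M^{\otimes n}$ under a finite, hence linearly reductive, group), so your flagged fact (b) is exactly the paper's étale-local step, and your fact (a) is a standard consequence of the universal property of relative coarse spaces in \cite{AOV} together with the description of representable stacks over $B\bG_m$ as $[W/\bG_m]$ with $W$ an algebraic space; both are legitimate to cite or to verify by the local argument you indicate. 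What your route buys is a more transparent statement being proved — you literally exhibit the relative coarse space — without needing \cite{AH}*{Proposition 2.3.10}; what the paper's route buys is the slightly stronger intermediate output that in the representable case $g$ itself is an isomorphism, which is the form in which the lemma gets used later (e.g.\ in Corollary \ref{Cor:comparison:Hacking} and Lemma \ref{Lemma:we:have:the:map:on:D:divided:by:tau:over:the:pt}). One small point to keep explicit in your write-up: the isomorphism $\overline{\cY} \cong \cX$ you construct must be compared with the map $\bar g$ furnished by the universal property, which you do correctly by observing that both are induced by the unique equivariant factorization of $\cP(\cG) \to \cP(F)$ through the coarse space $Q$.
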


\begin{proof} Let $\cY \to \cY' \to B\bG_m$ the relative coarse moduli space of the map $\cY \to B\bG_m$. Then $\cY' \to B\bG_m$ is representable and equips $\cY'$ with a line bundle $\cG'$ that pulls back to $\cG$. Furthermore, $f$ factors through $f' : \cY' \to X$ and we have $f'_*(\cG')^{\otimes m} = f_*(\cG)^{\otimes m} = F^{[m]}$. Thus we may apply Corollary \ref{Cor:particular:case:lemma:functor:morphism:to:the:AHstack} to obtain a morphism $g' : \cY' \to \cX$ and it suffices to prove $g'$ is an isomorphism. In particular, without loss of generality we may assume that $\cY \to B\bG_m$ is representable and we wish to prove that $g : \cY \to \cX$ is an isomorphism. 

To prove this claim, note first that the question is local over $X$, so from \cite{Ols16}*{Theorem 11.3.1} we can assume that $X=V/G$ and $\cY=[V/G]$ for a finite group $G$. Moreover, we can assume that $V=\spec(A)$ is affine. Then $\cG$ corresponds to a locally free $A$-module $M$ with a $G$-action and the coarse space of $\cG \cong [\spec( \bigoplus_{n \in \mathbb{Z}} M^{\otimes n})/G] $ is given by $$\spec\left(\bigoplus_{n \in \mathbb{Z}} ((M^{\otimes n})^G)\right) = \spec\left(\bigoplus_{n \in \mathbb{Z}}  f_*(\cG^{\otimes n})\right) \cong \spec\left(\bigoplus_{n\in \mathbb{Z}} F^{[n]}\right)=\cP(F).$$
Now, by \cite{AH}*{Proposition 2.3.10} the representability of $\cY \to B\bG_m$ implies that for every $p \in \cY$, the action of $\Aut_\cY(p)$ on $\cG_p$ is faithful. 
Then $\cP(\cG)=\spec_\cY ( \bigoplus_{n \in \mathbb{Z}} \cG^{\otimes n})$ is already an algebraic space so it is isomorphic to its coarse space: $\cP(F) \cong \cP(\cG)$. Thus, since $\cP(F) \to \cX$ is a smooth atlas and the diagram of Lemma \ref{Cor:particular:case:lemma:functor:morphism:to:the:AHstack} is cartesian, $g$ is an isomorphism.
\end{proof}

Lemma \ref{Lemma:iso:to:AH:stack:if:P(F):scheme} allows us to compare our definition of twisted stable family with $\mathbb{Q}$-Gorenstein deformations in the case where the divisor $D$ is Cartier.

\begin{Cor}\label{Cor:comparison:Hacking}
Let $(X,D)$ be an slc pair where $D$ is a Cartier divisor. Then $K_X$ is $\mathbb{Q}$-Cartier and the canonical covering stack $\cX' \to X$ is isomorphic to the AH stack $\cX$ for the $\mathbb{Q}$-line bundle $L=K_X+D$.
\end{Cor}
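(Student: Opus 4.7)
The plan is to build the isomorphism $\cX' \cong \cX$ directly using the machinery developed in this subsection, namely Corollary \ref{Cor:particular:case:lemma:functor:morphism:to:the:AHstack} combined with Lemma \ref{Lemma:iso:to:AH:stack:if:P(F):scheme}. The key idea is that since $D$ is Cartier on $X$, the two graded algebras $\bigoplus_n \omega_X^{[n]}$ and $\bigoplus_n \omega_X^{[n]}(nD)$ differ only by a twist by a genuine line bundle, so their associated $\mathbb{G}_m$-quotients must coincide.

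First I would handle the $\mathbb{Q}$-Cartier claim: by the definition of slc, $K_X+D$ is $\mathbb{Q}$-Cartier, and since $D$ is Cartier, $K_X = (K_X+D) - D$ is $\mathbb{Q}$-Cartier as well. This ensures the canonical covering stack $\cX'$ exists and carries a line bundle $\omega_{\cX'}$ whose coarse pushforward satisfies $\pi_*(\omega_{\cX'}^{\otimes n}) = \omega_X^{[n]}$, where $\pi:\cX' \to X$ denotes the coarse space morphism. Simultaneously, $(X,L=\omega_X(D))$ is a Koll\'ar family of $\mathbb{Q}$-line bundles over $\operatorname{Spec} k$, so the AH stack $\cX$ is defined.

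Next, I would build the comparison morphism. Set $\cG := \omega_{\cX'} \otimes \pi^*\cO_X(D)$, a line bundle on $\cX'$. By the projection formula, using that $\cO_X(D)$ is a genuine line bundle pulled back from $X$,
\[
\pi_*(\cG^{\otimes n}) \;\cong\; \pi_*(\omega_{\cX'}^{\otimes n}) \otimes \cO_X(nD) \;\cong\; \omega_X^{[n]}(nD) \;=\; L^{[n]},
\]
and these isomorphisms are compatible with multiplication, providing an isomorphism of graded $\cO_X$-algebras $\bigoplus_n \pi_*(\cG^{\otimes n}) \cong \bigoplus_n L^{[n]}$. Corollary \ref{Cor:particular:case:lemma:functor:morphism:to:the:AHstack} applied with $\cY = \cX'$ then yields a morphism $g:\cX' \to \cX$ over $X$.

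Finally, to show $g$ is an isomorphism, I invoke Lemma \ref{Lemma:iso:to:AH:stack:if:P(F):scheme}. Its hypotheses require that $\cX'$ be separated with coarse moduli space $X$, which holds by construction of the canonical covering stack. The main point to verify, and the only mildly delicate step, is that the morphism $\cX' \to B\mathbb{G}_m$ classifying $\cG$ is representable. For each geometric point $p \in \cX'$, the stabilizer $\Aut(p)$ acts trivially on the stalk $(\pi^*\cO_X(D))_p$, since this line bundle is pulled back from the coarse space; hence the action on $\cG_p$ coincides with the action on $\omega_{\cX',p}$, which is faithful by the defining property of the canonical covering stack. Thus $\cX' \to B\mathbb{G}_m$ is already representable, so the relative coarse moduli space of this map is $\cX'$ itself, and Lemma \ref{Lemma:iso:to:AH:stack:if:P(F):scheme} concludes that $g:\cX' \to \cX$ is an isomorphism.
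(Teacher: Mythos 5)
Your proposal is correct and follows essentially the same route as the paper: both take $\cG = \omega_{\cX'}\otimes\pi^*\cO_X(D)$, check that its pushforwards recover $L^{[n]}$ and that the stabilizers act faithfully on $\cG$ (trivial action on the pulled-back Cartier part, faithful on $\omega_{\cX'}$), and then conclude via Lemma \ref{Lemma:iso:to:AH:stack:if:P(F):scheme}. The only cosmetic difference is that you identify $\pi_*(\cG^{\otimes n})$ with $L^{[n]}$ by the projection formula (plus the implicit reflexive-hull comparison $\omega_X^{[n]}\otimes\cO_X(nD)\cong(\omega_X(D))^{[n]}$), where the paper argues directly by reflexivity and agreement on a big open set.
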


\begin{proof}
Consider the coarse space map $p:\cX' \to X$ and let $\cG:=K_{\cX'}+p^*D$. Then:
\begin{enumerate}
    \item $\cG$ is a line bundle with $p_*(\cG^{\otimes n})= \cO_X(K_X+D)^{[n]}$, and
    \item for every point $q \in \cX'$, the action of $\Aut_{\cX'}(q)$ on $\cG_q$ is faithful.
\end{enumerate}
Point (1) follows since both sides of the equality are reflexive and they agree on the big open set where $p$ is an isomorphism. Point (2) holds because the action of $\Aut_{\cX'}(q)$ on $(K_{\cX'})_q$ is faithful by definition, the action on $(p^*D)_q$ is trivial, and the tensor product of a trivial and a faithful representation is a faithful representation. Thus we may apply Lemma \ref{Lemma:iso:to:AH:stack:if:P(F):scheme} to conclude that $\cX \cong \cX'$.
\end{proof}

\subsection{Local structure of twisted stable pairs}\label{Subsection:different} 

The goal of this subsection is to study the local structure of $\cX$ along the divisor $\cD$. First, we prove Theorem \ref{thm:intro2} (see Proposition \ref{Prop:no:different:on:stack} and Corollary \ref{Cor:different:only:from:double:locus:arbitrary:dim}) which says that on a twisted stable pair the singularities of $\cX$ do not contribute to the different. Then we explore the relationship between the stack structure on $\cD$ and the different on the coarse space of the pair (Lemmas \ref{Lemma:relation:singularities:different} and \ref{Lemma:AH:stack:iso:along:nodes}). 

We start with the local notion of a twisted pair: 

\begin{Def} A \emph{twisted pair} is a pair $(\cX, \phi:\omega_{\cX} \to \cL)$ where $\cX$ is an open substack of an $\cX'$ with $(\cX' \to \spec(k),\phi':\omega_{\cX'} \to \cL)$ a twisted stable pair and $\phi$ pulled back from $\phi'$.
\end{Def}

\begin{Prop}\label{Prop:no:different:on:stack} Let $(f:\cX \to B, \phi:\omega_f \to \cL)$ be a twisted surface pair. Then $\cL\big|_{\cD_\phi} \cong \omega_{\cD_\phi /B}$.
\end{Prop}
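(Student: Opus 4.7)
The plan is to exploit the identification $\omega_{\cX/B}\otimes \cL^{-1} \cong \cO_\cX(-\cD)$ which is immediate from Lemma~\ref{Lemma:phi:gives:an:ideal}: by construction of $\cD$, the map $\phi \otimes \cL^{-1}$ is injective with image exactly $\cO_\cX(-\cD)$. From the resulting short exact sequence
\[
0 \to \omega_{\cX/B} \otimes \cL^{-1} \to \cO_\cX \to \cO_\cD \to 0
\]
I will extract two short exact sequences with the same first morphism --- one ending in $\cL|_\cD$, the other ending in $\cE xt^1_{\cO_\cX}(\cO_\cD, \omega_{\cX/B})$. Comparing them gives $\cL|_\cD \cong \cE xt^1_{\cO_\cX}(\cO_\cD, \omega_{\cX/B})$, and then local duality identifies the Ext sheaf with $\omega_{\cD/B}$.

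First, tensoring the sequence above with the line bundle $\cL$ yields
\[
0 \to \omega_{\cX/B} \to \cL \to \cL|_\cD \to 0.
\]
Second, applying $\cH om_{\cO_\cX}(-, \omega_{\cX/B})$ to the same sequence produces a long exact sequence of local Ext sheaves in which: $\cH om(\cO_\cD, \omega_{\cX/B})=0$ because $\omega_{\cX/B}$ is reflexive (in particular $S_2$) and $\cO_\cD$ is supported in pure codimension one; $\cE xt^1(\cO_\cX, \omega_{\cX/B})=0$ because $\cO_\cX$ is locally free; and the middle term is canonically $\cL$, via $\cH om(\omega_{\cX/B} \otimes \cL^{-1}, \omega_{\cX/B}) \cong \cL \otimes \cH om(\omega_{\cX/B}, \omega_{\cX/B}) \cong \cL$. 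The last identification $\cH om(\omega_{\cX/B}, \omega_{\cX/B}) \cong \cO_\cX$ can be justified via Lemma~\ref{Lemma:unique:arrow:reflexive:hull}: both sides are reflexive and visibly agree on the fiberwise-big Gorenstein locus $U(f)$ of $\cX \to B$ (an slc surface is Gorenstein outside isolated points, by the condition that $\phi$ is an isomorphism at codimension-one singular points combined with nodality of the double locus). A short diagram chase shows the resulting morphism $\omega_{\cX/B} \to \cL$ is again $\phi$, so one obtains the second short exact sequence
\[
0 \to \omega_{\cX/B} \xrightarrow{\phi} \cL \to \cE xt^1_{\cO_\cX}(\cO_\cD, \omega_{\cX/B}) \to 0,
\]
and the two cokernels are canonically identified.

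To close, I would apply relative local duality to the codimension-one closed embedding $\cD \hookrightarrow \cX$. The surface hypothesis is crucial here: the fibers $\cX_b$ are demi-normal surfaces, hence $S_2$ in dimension two and so Cohen--Macaulay, while the reduced one-dimensional $\cD_b$ is automatically Cohen--Macaulay; together with flatness of $\cD \to B$ (Corollary~\ref{Cor:D:is:a:FLAT:family}), this makes the embedding $\cD \hookrightarrow \cX$ relatively Cohen--Macaulay, and the standard local duality yields $\cE xt^1_{\cO_\cX}(\cO_\cD, \omega_{\cX/B}) \cong \omega_{\cD/B}$. The main delicate point is to set up this duality cleanly on a DM stack in the relative setting with possibly singular fibers. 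I expect the cleanest path is to reduce to a scheme via an \'etale presentation, verify the duality fiberwise where everything is Cohen--Macaulay, and then promote to the relative statement using base-change compatibility of $\omega_{\cX/B}$ and flatness of $\cD \to B$; as a sanity check, on the fiberwise-big open locus where $\cD$ is Cartier, the isomorphism $\cL|_\cD \cong \omega_{\cD/B}$ reduces to classical adjunction with the Poincar\'e residue map.
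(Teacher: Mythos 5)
Your proposal is essentially the paper's own argument: dualize the sequence $0 \to \cO_\cX(-\cD) \to \cO_\cX \to \cO_\cD \to 0$ into $\omega_f$, identify $\cO_\cX(-\cD)$ with $\omega_f \otimes \cL^{-1}$ (Lemma \ref{Lemma:phi:gives:an:ideal}), use $\cH om(\omega_f,\omega_f)\cong\cO_\cX$ via Lemma \ref{Lemma:unique:arrow:reflexive:hull}, and identify $\cE xt^1(\cO_\cD,\omega_f)$ with $\iota_*\omega_{\cD/B}$ by Grothendieck duality (fibers CM). The only difference is bookkeeping: where you compare the cokernels of two short exact sequences (and hence need your diagram chase identifying the two maps $\omega_{\cX/B}\to\cL$), the paper simply reads off a surjection $\cL \to \iota_*\omega_{\cD/B}$ from the long exact sequence and concludes because a surjection of line bundles on $\cD$ is an isomorphism.
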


\begin{proof}
We drop the subscript $\phi$ in $\cD_\phi$. The proof follows closely \cite{Kollarsingmmp}*{Pag 152}.

Let $\iota:\cD \to \cX$ denote the natural closed embedding. From Grothendieck duality, since $f$ is Cohen-Macauley and $\cD \to B$ is Gorenstein, we have $\iota_*\omega_{\cD/B} \cong \cE xt^1(\iota_* \cO_\cD,\omega_f)$. Consider the exact
sequence $$0 \to \cO_\cX(-\cD) \to \cO_\cX\to \iota_*\cO_\cD \to 0.$$  Taking $\cH om_\cX(-,\omega_f)$ induces a morphism
$$ \cH om(\cO_\cX, \omega_f) \to\cH om(\cO_\cX(-\cD), \omega_f) \to \cE xt^1 (\iota_* \cO_\cD, \omega_f) \to \cE xt^1(\cO_\cX, \omega_f) = 0$$
where the last term is 0 since $\cO_\cX$ is locally free.
But from Lemma \ref{Lemma:phi:gives:an:ideal}, the ideal $\cO_\cX(-\cD)$ is defined as $\omega_f \otimes \cL^{-1}$, therefore
$$\cH om(\cO_\cX(-\cD), \omega_f) \cong \cH om(\omega_f \otimes \cL^{-1}, \omega_f)
\cong \cH om(\omega_f, \omega_f)\otimes \cL.$$

Now from Lemma \ref{Lemma:unique:arrow:reflexive:hull}, taking $U$ the $f$-Gorenstein locus, we have $\cH om(\omega_f, \omega_f) \cong \cO_\cX$. Therefore the sequence above gives a surjective morphism $\cL \to \iota_*(\omega_{\cD/B})$. This induces a surjective morphism $\iota^*(\cL) \to \omega_{\cD/B}$, but a surjective morphism of line bundles is an isomorphism.
\end{proof}

\begin{Cor}\label{Cor:different:only:from:double:locus:arbitrary:dim}
Let $(\cX\to \spec(k), \omega_\cX \to \cL)$ be a twisted stable pair of any dimension. Then $\cL\big|_{\cD} \cong \omega_{\cD}$. Moreover, when $\cD$ is $S_2$, it is an slc canonically polarized orbispace.\end{Cor}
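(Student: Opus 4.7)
The plan is to reduce the general-dimensional statement to the two-dimensional case of Proposition \ref{Prop:no:different:on:stack} by slicing with general sections of an ample power of $\cL$, and then extend the resulting isomorphism to all of $\cD$ via reflexivity. Since $\cL$ is ample on $\cX$, for $m \gg 0$ one may choose $d - 2 = \dim \cX - 2$ general sections of $\cL^{\otimes m}$ whose common vanishing locus is a two-dimensional closed substack $\cS \subset \cX$. By a stacky Bertini argument, together with inversion of adjunction as used in the proof of Theorem \ref{Teo:we:have:alg:stack}, one can arrange $(\cS, \cD \cap \cS)$ to be slc with $\cD \cap \cS$ reduced of pure dimension one. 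Iterated adjunction for the Cartier divisors cut out by the sections gives $\omega_\cS \cong \omega_\cX|_\cS \otimes \cL^{\otimes m(d-2)}|_\cS$, so $(\cS, \cD \cap \cS)$ is a twisted stable surface pair polarized by $\cL^{\otimes m(d-2)+1}|_\cS$.

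Proposition \ref{Prop:no:different:on:stack} then yields $\cL^{\otimes m(d-2)+1}|_{\cD \cap \cS} \cong \omega_{\cD \cap \cS}$. A parallel adjunction on the divisor side provides $\omega_{\cD \cap \cS} \cong \omega_\cD|_{\cD \cap \cS} \otimes \cL^{\otimes m(d-2)}|_{\cD \cap \cS}$, and canceling the common factor yields the desired $\cL|_{\cD \cap \cS} \cong \omega_\cD|_{\cD \cap \cS}$. Varying the chosen sections sweeps out a big open subset $V \subset \cD$, and the local isomorphisms patch canonically (the uniqueness coming from Lemma \ref{Lemma:unique:arrow:reflexive:hull}) to give $\cL|_V \cong \omega_\cD|_V$. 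Assuming $\cD$ is $S_2$, the dualizing sheaf $\omega_\cD$ is reflexive and $\cL|_\cD$ is a line bundle, so Lemma \ref{Lemma:unique:arrow:reflexive:hull} extends this to the desired isomorphism $\cL|_\cD \cong \omega_\cD$ on all of $\cD$.

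To conclude that $\cD$ is an slc canonically polarized orbispace, note that ampleness of $\cL$ transfers through the isomorphism to give ampleness of $\omega_\cD$, whence the canonical polarization. For the slc condition, we use Kollár's adjunction theory: $(\cX, \cD)$ being slc implies $(\cD^n, \Diff)$ is lc, and the identity $\cL|_\cD \cong \omega_\cD$ on the nose (with no correction term from the different) forces $\Diff = 0$ on $\cD^n$; hence $\cD^n$ is itself lc, and since $\cD$ inherits demi-normality from the slc pair structure together with the $S_2$ hypothesis, $\cD$ is slc. The main obstacle is the stacky Bertini argument: one must verify that a general section of $\cL^{\otimes m}$ cuts out a substack whose intersection with $\cD$ is reduced and transverse, and that the sliced pair remains slc, in the orbifold setting with possibly non-Gorenstein singularities. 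Once this is in hand, the remaining steps are a formal consequence of adjunction and the reflexivity extension of Lemma \ref{Lemma:unique:arrow:reflexive:hull}.
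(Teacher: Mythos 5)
There is a genuine gap at the center of your argument, namely the passage from the slices to $\cD$ itself. Your Bertini construction produces, for each choice of general sections, an \emph{abstract} isomorphism $\cL\big|_{\cD\cap\cS}\cong \omega_\cD\big|_{\cD\cap\cS}$ on a one-dimensional \emph{closed} substack of $\cD$. Isomorphisms on a sweeping family of closed slices do not assemble into an isomorphism on an open subset $V\subset\cD$: nothing in your construction singles out a map $\cL\big|_\cD\to\omega_\cD$ restricting to these isomorphisms, and Lemma \ref{Lemma:unique:arrow:reflexive:hull} cannot repair this, since it only extends (uniquely) a homomorphism already given on a big \emph{open} substack; it does not manufacture one from data on curves. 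This is precisely what the paper's argument supplies and yours lacks: the isomorphism in Proposition \ref{Prop:no:different:on:stack} is not abstract but comes from the Grothendieck duality map $\cL\to\iota_*\omega_{\cD}$ (through $\cE xt^1(\iota_*\cO_\cD,\omega_\cX)$), which exists wherever $\cX$ is CM and $\cD$ is Gorenstein, hence at every codimension-one point of $\cD$. The paper therefore just localizes at the codimension-two points of $\cX$, applies the surface statement there to see that this canonical map is an isomorphism in codimension one on $\cD$, and extends using that $\omega_\cD$ is an $S_2$ sheaf; no slicing is needed. Note also that your route proves the first isomorphism only under the extra hypothesis that $\cD$ is $S_2$, whereas the corollary asserts it unconditionally, and it would further require adjunction statements such as $\omega_\cS\cong(\omega_\cX\otimes\cL^{\otimes m(d-2)})\big|_\cS$ and its analogue on $\cD$ in a setting where $\omega_\cX$, $\omega_\cD$ are not line bundles and $\cD$ need not be CM --- an unaddressed burden on top of the stacky Bertini statement you flag but do not prove.

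The slc deduction is also incorrect as stated. The isomorphism $\cL\big|_\cD\cong\omega_\cD$ does \emph{not} force $\Diff_{\cD^n}=0$: pulling back along the normalization gives $\nu^*\omega_\cD\cong\omega_{\cD^n}(G)$ with $G$ the preimage of the double locus, so the different is exactly $G$ (this is the content of the Remark following the corollary). Accordingly, what adjunction gives is that the pair $(\cD^n,G)$ is lc, and that is what one must use: lc-ness of the normal stack $\cD^n$ by itself (which for curves is vacuous) together with demi-normality does not yield slc-ness of $\cD$, which requires the conductor pair to be lc. The paper instead observes that $\cD$ is $S_2$ and nodal in codimension one, hence demi-normal, and concludes by inversion of adjunction; your argument needs to be rerouted through $(\cD^n,G)$ or through that inversion-of-adjunction step. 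The ``canonically polarized orbispace'' part of your write-up is fine once the isomorphism and slc-ness are in place.
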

\begin{proof} By \cite{Kov18}*{Lemma 3.7.5}, the canonical sheaf $\omega_Z$ is $S_2$ on any excellent scheme $Z$ admitting a dualizing complex. Furthermore, by \cite{BH93}*{Proposition 1.2.16}, the proeprty of being $S_2$ can be checked after finite \'etale base change. Therefore $\omega_{\cD}$ is an $S_2$ sheaf on $\cD$. Now, we may apply Proposition \ref{Prop:no:different:on:stack} on the codimension $2$ points of $\cX$ to see that $\omega_{\cD}$ agrees with $\cL\big|_{\cD}$ in codimension $1$ on $\cD$. Since both sheaves are $S_2$, we conclude that $\cL\big|_{\cD} \cong \omega_{\cD}$. 

If $\cD$ is $S_2$, since it is nodal in codimension $1$, it is demi-normal. In particular by inversion of adjunction (\cite{Pat16}*{Lemma 2.11}) $\cD$ is slc. Finally, $\omega_{\cD}$ is a polarizing line bundle since it is the pullback of one by a closed embedding.
\end{proof} 

\begin{Remark} Usually when one considers questions about adjunction for stable pairs, one works with the normalization of the divisor $n : \cD^n \to \cD$. By Corollary \ref{Cor:different:only:from:double:locus:arbitrary:dim}, we have that $(\iota \circ n)^*\cL = n^*\omega_{\cD}$. Since $\cD$ is nodal in codimension $1$, this latter sheaf is isomorphic to $\omega_{\cD}(G)$ where $G$ is the double locus of $\cD$. Therefore, we may interpret the corollary as stating that the only contribution to the different on $\cD^n$ comes from the double locus of $\cD$. 
\end{Remark}

Note that in general, we have an adjunction morphism $\cM_{n,v} \to \cO rb^\cL$ given by $$(\cX \to B, \phi : \omega_{\cX} \to \cL) \mapsto (\cD_{\phi}, \cL\big|_{\cD_{\phi}}).$$ Indeed $\cD_\phi \to B$ is flat with reduced fibers (Corollary \ref{Cor:D:is:a:FLAT:family}) and $\cL\big|_{\cD}$ is a polarizing line bundle. Consider the locus $\cU \subseteq \cM_{n,v}$ where $\cD_\phi \to \cM_{n,v}$ is $S_2$ which is open by \cite{EGAIV}*{Théorème 12.1.6} and the fact that the morphism is closed. Over $\cU$, Corollary \ref{Cor:different:only:from:double:locus:arbitrary:dim} gives us that $(\cD_\phi \to B, \cL\big|_{\cD_{\phi}})$ is a family of canonically polarized slc orbispaces of dimension $(n-1)$ and so over this locus, the morphism above lands in the open substack of $\cO rb^{\omega}$ of those orbispaces with slc singularities. 

The following corollary follows from Proposition \ref{Prop:no:different:on:stack} and \cite{Kollarsingmmp}*{Proposition 2.35}:
\begin{Cor}
Let $(\cX,\cD)$ be a twisted surface pair over $\spec (k)$ and $p \in \cD$ such that $\cD$ is smooth at $p$. Then $\cX$ is smooth at $p$.
\end{Cor}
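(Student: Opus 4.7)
The plan is to reduce to the scheme case via an \'etale cover and then invoke the local computation of the different on a surface lc pair from \cite{Kollarsingmmp}*{Proposition 2.35}, which says that the different vanishes at a smooth point of $D$ precisely when the ambient surface is smooth there.

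First, I would choose an \'etale cover $\widetilde{\cX}\to \cX$ by a scheme (using that $\cX$ is DM) with a lift $\tilde p\in \widetilde{\cX}$ of $p$. By Definition \ref{Def:slc:stack}, the pullback pair $(\widetilde{\cX},\widetilde{\cD})$ is slc, where $\widetilde{\cD}$ is the pullback of $\cD$. Since \'etale morphisms preserve smoothness in both directions, $\widetilde{\cD}$ is smooth at $\tilde p$, and it suffices to prove that $\widetilde{\cX}$ is smooth at $\tilde p$.

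Next, I would translate Proposition \ref{Prop:no:different:on:stack} into the vanishing of the different at $\tilde p$. Pulling the isomorphism $\cL\big|_{\cD}\cong \omega_{\cD}$ back to $\widetilde{\cX}$ and restricting to $\widetilde{\cD}$ gives $\widetilde{\cL}\big|_{\widetilde{\cD}}\cong \omega_{\widetilde{\cD}}$. On the other hand, from Lemma \ref{Lemma:phi:gives:an:ideal} the injection $\omega_{\widetilde{\cX}}\hookrightarrow \widetilde{\cL}$ identifies $\widetilde{\cL}$ with the reflexive hull of $\omega_{\widetilde{\cX}}(\widetilde{\cD})$, so $\widetilde{\cL}$ represents the $\bQ$-Cartier divisor class $K_{\widetilde{\cX}}+\widetilde{\cD}$. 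By the definition of the different recalled in Section \ref{Subsection:different}, the restriction of $\widetilde{\cL}$ to the normalization $\widetilde{\cD}^n$ agrees with $K_{\widetilde{\cD}^n}+\Diff_{\widetilde{\cD}^n}(0)$. Since $\widetilde{\cD}$ is smooth near $\tilde p$, the normalization map $\widetilde{\cD}^n\to \widetilde{\cD}$ is an isomorphism locally, so $\omega_{\widetilde{\cD}^n}\cong \omega_{\widetilde{\cD}}$ near $\tilde p$, and combining this with $\widetilde{\cL}\big|_{\widetilde{\cD}}\cong \omega_{\widetilde{\cD}}$ yields $\Diff_{\widetilde{\cD}^n}(0)=0$ in a neighborhood of $\tilde p$.

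Finally, \cite{Kollarsingmmp}*{Proposition 2.35} asserts that for a surface lc pair the different has nonnegative coefficient at every smooth point of $D^n$, and has coefficient zero at such a point if and only if the ambient surface is smooth there. Applied to $(\widetilde{\cX},\widetilde{\cD})$ at $\tilde p$, this forces $\widetilde{\cX}$ to be smooth at $\tilde p$, and \'etale descent then gives smoothness of $\cX$ at $p$. The main obstacle I expect is the middle step, namely carefully matching $\widetilde{\cL}$ with the $\bQ$-Cartier class $K_{\widetilde{\cX}}+\widetilde{\cD}$ and identifying its pullback to $\widetilde{\cD}^n$ with $K_{\widetilde{\cD}^n}+\Diff_{\widetilde{\cD}^n}(0)$; however, because $\widetilde{\cL}$ is a genuine line bundle (not merely a $\bQ$-line bundle) and $\widetilde{\cD}$ is smooth at $\tilde p$, this comparison is clean via the Poincar\'e residue description of the different.
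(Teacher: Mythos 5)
Your argument is correct and is essentially the paper's own (unwritten) proof: the paper simply cites Proposition \ref{Prop:no:different:on:stack} together with \cite{Kollarsingmmp}*{Proposition 2.35}, which is exactly the combination you use after the routine reduction to an \'etale scheme chart. Your middle step is sound because the isomorphism $\cL\big|_{\cD}\cong\omega_{\cD}$ in Proposition \ref{Prop:no:different:on:stack} is the natural residue/duality map, so it does give vanishing of the different (not merely linear triviality) near $\tilde p$, as you anticipate with the Poincar\'e residue remark.
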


 Proposition \ref{Prop:no:different:on:stack} tells us that in replacing a surface pair $(X,D)$ with its associated AH pair $(\cX, \cD)$, the different on $D$ gets replaced by the stack structure on $\cD$. Our goal now is to make explicit the relation between the stack structure on $\cD$ and the different on $D$.
 
\begin{Lemma}\label{Lemma:AH:stack:iso:along:nodes}
Consider $(X,D)$ an lc surface and assume that $p \in D$ is a nodal point of $D$. Then $K_X+D$  is Cartier at $p$ and $X$ agrees with its AH stack in a neighbourhood of $p$.
\end{Lemma}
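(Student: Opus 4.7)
The plan is to derive the second assertion from the first, and to prove the first via the classification of log canonical surface germs with nodal reduced boundary.

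First, I would observe that once one knows $K_X + D$ is Cartier in some neighborhood $U$ of $p$, the AH-claim follows formally. Indeed, the AH stack $\cX$ is built from the Koll\'ar family associated to $F = \cO_X(K_X+D)$ as $\cX = [\cP(F)/\mathbb{G}_m]$ with $\cP(F) = \operatorname{Spec}_X \bigoplus_n F^{[n]}$. When $F|_U$ is a genuine line bundle, one has $\bigoplus_n F^{[n]}|_U = \bigoplus_n F^{\otimes n}|_U$, so $\cP(F)|_U \to U$ is a $\mathbb{G}_m$-torsor and its stack quotient by $\mathbb{G}_m$ is canonically $U$ itself, giving $\cX|_U \cong U$.

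For the Cartier statement, I would invoke Koll\'ar's classification of lc surface germs with reduced boundary having two analytic branches at the marked point (see e.g.\ \cite{Kollarsingmmp}*{Section 3.3}). Since $p$ is a node of $D$, there are precisely two such branches, and the classification shows that the formal germ of $(X,D)$ at $p$ is isomorphic to the formal germ at the origin of $(\mathbb{A}^2/\bmu_n, D_n)$, where $\bmu_n$ acts on $\mathbb{A}^2$ by $(x,y) \mapsto (\zeta x,\zeta^q y)$ for some $q$ coprime to $n$ (the case $n=1$ being $X$ smooth at $p$) and $D_n$ is the image of $\{xy=0\}$. This is a toric pair with $D_n$ the toric boundary.

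In this toric model the rational $2$-form $\frac{dx\wedge dy}{xy}$ has total $\bmu_n$-weight $(1+q)-(1+q)=0$, so it descends to a nowhere vanishing section of $\omega_X(D)$ on the quotient, trivializing $\cO_X(K_X+D)$ in a formal neighborhood of $p$. Since being Cartier is local and descends faithfully flatly from the formal completion, this proves that $K_X+D$ is Cartier near $p$, and combined with the first paragraph it proves the lemma. The main obstacle I see is the appeal to the classification in the previous paragraph; if one prefers to avoid it, one can instead try to construct the index-one cover of $(X,D)$ near $p$ directly and argue via inversion of adjunction that this cover must be smooth with the two branches of $D$ becoming coordinate axes, thereby producing the toric normal form intrinsically.
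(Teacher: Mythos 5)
Your proof is correct and follows essentially the same route as the paper's: both reduce the germ at $p$ to a cyclic quotient of $(\mathbb{A}^2,\{xy=0\})$ with the branches preserved (the paper via Step 2 of the proof of \cite{Kollarsingmmp}*{Corollary 2.32} together with \cite{Kollarsingmmp}*{Corollary 2.43}, you via the classification of lc surface germs), observe that the generator $\frac{dx\wedge dy}{xy}$ of the log canonical sheaf is invariant and hence descends as a Cartier divisor, and note that the AH stack is trivially isomorphic to $X$ over the locus where $K_X+D$ is Cartier.
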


\begin{proof}
From Step 2 of the proof of \cite{Kollarsingmmp}*{Corollary 2.32}, it follows that the singularity of $X$ at $p$ is a cyclic quotient singularity. In particular, denoting the completed local ring $\widehat{\cO_{X,p}}$ by $R$, there
is a morphism $\pi:\spec(k[\![x,y]\!]) \to \spec(R)$ which is the quotient by a cyclic group $G$. The action of $G$ is étale in codimension one so $\pi^*K_{\spec(R)}=K_{\spec(k[\![x,y]\!])}$ and
the pair $(\spec(k[\![x,y]\!]),\pi^*D)$ is lc from \cite{Kollarsingmmp}*{Corollary 2.43}. But then $p^*D$ is nodal, and the group $G$ preserves the two branches, so up to an analytic change of coordinates we may suppose that $D=(xy)$ and $x,y$ are eigenvectors for $G$. Furthermore, the log canonical divisor is Cartier generated by $\frac{dx \wedge dy}{xy}$.
Then the action of $G$ on the log canonical divisor is trivial so the log canoical divisor descends to a Cartier divisor on $\spec(R)$.  
\end{proof}

We are left with understanding $\cD$ along the points where $D$ is smooth:
\begin{Lemma}\label{Lemma:relation:singularities:different}
 Let $(X,D)$ be an lc surface and $q \in D$ a closed point at which $D$ is smooth. Assume that $q$ appears in the different of $(X,D)$ with multiplicity $m$. Let $(\cX,\cD)$ be the
 twisted surface pair associated to $(X,D)$ and let $p \in \cD$ be the point lying over $q$ with stabilizer group $G:=\Aut_\cX(p)$. Then we have the following.
 \begin{enumerate}
  \item If $p$ is a smooth point of $\cD$, then $m=1-\frac{1}{|G|}$;
  \item If $p$ is a node, then $m=1$, $G=\bmu_2$ and $G$ acts by swapping the two branches of $\cD$.
 \end{enumerate}
 \end{Lemma}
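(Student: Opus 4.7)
The proof is étale-local around $p$, so my plan is to set up a Luna-type slice $\cX = [U/G]$ with $G = \Aut_\cX(p) = \bmu_n$ and a $G$-fixed $p_0 \in U$ over $p$, and then to treat the two cases separately. Let $\tilde\cD \subset U$ denote the preimage of $\cD$. The Kollár hypothesis (no codimension-one stabilizers on $\cX$) translates to the $G$-action on $U$ being free in codimension one.

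In Case 1, the Corollary preceding the Lemma gives that $\cX$, hence $U$, is smooth at $p_0$. Linearizing and straightening the smooth divisor $\tilde\cD$, I may write $U = \spec k[\![u,v]\!]$ with $\bmu_n$ acting by diagonal characters $(\chi^a,\chi^b)$, $\gcd(a,n)=\gcd(b,n)=1$, and $\tilde\cD = (u=0)$. I plan to read off the different from Proposition \ref{Prop:no:different:on:stack}: it supplies the local generator $dv$ of $\cL\big|_\cD \cong \omega_\cD$, and taking its $n$-th power to descend to the coarse space $D = \spec k[\![t]\!]$ with $t = v^n$ produces
\[
(dv)^n = \frac{(dt)^n}{n^n\, t^{n-1}},
\]
which identifies $\omega_X^{[n]}(nD)\big|_D$ with $\omega_D^{[n]}((n-1)q)$ and hence gives $m = 1-1/|G|$. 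This part is routine.

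The heart of the argument, and the main obstacle, is Case 2, where I need to force $G = \bmu_2$ acting by swap. I would proceed in two steps. First, I claim $G$ must swap the two analytic branches $\tilde\cD_1,\tilde\cD_2$ of $\tilde\cD$ at $p_0$: otherwise each branch is $G$-invariant, and exactly as in Case 1 its coarse space is a smooth curve through $q$, forcing $D$ to be nodal at $q$, a contradiction. Second, to cut $|G|$ down to $2$, I would invoke the polarizing condition on $\cL$. By Proposition \ref{Prop:no:different:on:stack}, $\cL\big|_\cD\cong\omega_\cD$, so the $G$-character of $\cL_p$ equals that of $\omega_\cD$ at the node. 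A Poincaré-residue calculation shows that any branch-swapping element $g \in G$ interchanges the two local residue generators $ds/s$ and $-dt/t$, and thereby acts on $\omega_\cD$ at $p$ by $-1 = \zeta^{n/2}$. The polarizing property of $\cL$ (faithful $G$-action on $\cL_p$) forces this character to be coprime to $n$, so $\gcd(n/2,n)=1$, i.e.\ $n=2$.

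Once $G = \bmu_2$ with swap is in hand, the coefficient $m = 1$ follows at once from the Remark after Corollary \ref{Cor:different:only:from:double:locus:arbitrary:dim}: pulling $\cL\big|_\cD$ back along the normalization $\nu : \cD^n \to \cD$ gives $\nu^*(\cL\big|_\cD) \cong \omega_{\cD^n}(G_{\mathrm{dbl}})$, where $G_{\mathrm{dbl}}$ is the preimage of the double locus. Étale-locally $\tilde\cD^n = \tilde\cD_1 \sqcup \tilde\cD_2$ with $\bmu_2$ acting freely by swap, so $\cD^n$ has a unique point $p^\sharp$ over $p$ with trivial stabilizer, and $G_{\mathrm{dbl}}$ contains $p^\sharp$ with coefficient $1$. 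Since $\cD^n \to D$ is an isomorphism near $p^\sharp$, descent to the coarse space yields $\Diff_D\big|_q = 1\cdot q$, as required.
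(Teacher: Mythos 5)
Your argument is correct, and while Case (1) is essentially the paper's computation (you compare the pullback of the log canonical to the stacky curve with the coarse curve, just phrased with explicit forms $dv$, $(dt)^n/(n^nt^{n-1})$ instead of the divisor identities $\pi_D^*(K_D+mq)=K_\cD$, $\pi_D^*K_D=K_\cD-(r-1)p$), your Case (2) takes a genuinely different route. The paper works on the surface: it passes to an \'etale chart $X'$ with $G$-action, lets $H\subseteq G$ be the branch-preserving subgroup, forms the intermediate quotient $(X'/H,D'/H)$, and invokes Lemma \ref{Lemma:AH:stack:iso:along:nodes} (Cartierness of the log canonical at a node of the boundary of an lc surface) to show $H$ acts trivially on $(K_{X'}+D')_p$, so faithfulness of the $G$-action on $\cL_p$ kills $H$ and forces $G\cong\bmu_2$ acting by swap. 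You instead work on the curve: via $\cL\big|_{\cD}\cong\omega_\cD$ (Proposition \ref{Prop:no:different:on:stack}) and a Rosenlicht/Poincar\'e residue computation at the node, you show the $G$-action on the fiber of $\omega_\cD$ at $p$ factors through the branch-swap sign character (branch-preserving elements fix the generator with residues $(+1,-1)$, swaps send it to its negative), and then faithfulness gives $|G|=2$ directly. This is a clean substitute: it re-proves, intrinsically on the nodal curve and without linearizing or classifying the surface germ, exactly the content of Lemma \ref{Lemma:AH:stack:iso:along:nodes} that the paper needs, and it has the bonus of making the conclusion $m=1$ explicit (the paper leaves it implicit, verifying it only in the subsequent example by an indirect plt/inversion-of-adjunction argument). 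The price is that both of your cases lean on the same polarizing (faithful-fiber) hypothesis the paper uses, so neither route is strictly more general.

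One step you should spell out, in both the Case (1) descent and the final $m=1$ computation: the different is defined by comparing $\nu^*(\omega_X^{[k]}(kD))$ with $\omega_{D^n}^{[k]}$ on the coarse curve, whereas your adjunction statements live on $\cD^n$ and involve $\cL^{\otimes k}$. To pass from one to the other at the stacky point you need the counit $\pi^*\pi_*(\cL^{\otimes k})\to\cL^{\otimes k}$ to be an isomorphism near $p$ when $k$ is a multiple of the order of the $G$-character on $\cL_p$ (equivalently, a multiple of the index of $K_X+D$ at $q$); this holds precisely because that character is faithful, so an invariant generator of the fiber of $\cL^{\otimes k}$ exists. Without this remark, the identification of $\nu^*(\cL^{\otimes k})\big|_{\cD^n}$ with the pullback of $\omega_X^{[k]}(kD)$ near $p^\sharp$, and hence the claim that no extra correction supported at $q$ appears upon "descent to the coarse space," is asserted rather than proved. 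It is a short verification, comparable in weight to steps the paper itself glosses, but it is the one place where your write-up is incomplete.
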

 
 \begin{Remark} For case $(1)$, $m$ determines the stabilizer group since the stack is cyclotomic. \end{Remark} 
 
\begin{proof} Since the question is local, we may pass to an open subset and assume that $\cX$ has no stabilizers outside of $p$ and $D$ has no different outside of $q$.

For case $(1)$, consider the following commutative diagram.
 
 $$
 \xymatrix{\cD \ar[r]^\alpha \ar[d]_{\pi_D} & \cX \ar[d]^\pi \\
 D \ar[r]_a & X}
 $$
 
Letting $n$ be the index of $K_X + D$, we have the following equalities:
\begin{itemize}
\item $\pi^*(n(K_X + D)) = n(K_\cX + \cD)$ since $\pi$ is an isomorphism in codimension one,
\item $a^*(n(K_X + D)) = n(K_D + mq)$ by assumption on the different, and
\item $\alpha^*(n(K_\cX + \cD)) = nK_{\cD}$ by Proposition \ref{Prop:no:different:on:stack}.
\end{itemize}
Putting this together along with commutativity of the above diagram, we obtain 
$$
\pi_D^*(n(K_D+mq))= \pi_D^*a^*(n(K_X + D)) = \alpha^*\pi^*(n(K_X + D)) = nK_\cD,
$$
or simply $\pi_D^*(K_D + mq) = K_\cD$. Up to completing around $p$, we may assume that coarse map $\cD \to D$ is given by $\spec(k[\![x]\!]) \to \spec(k[\![x^r]\!])$
where $r=|G|$. In this case, one readily sees that $\pi_D^*(q) = rp$ and $\pi_D^*K_D = K_\cD -(r-1)p$. Putting this together with the previous equality, we have then $K_\cD = K_\cD-(r-1)p+rmp$ which gives $m=\frac{r-1}{r}$.

For point $(2)$, we may again replace $(\cX,\cD)$ with an étale neighboourhood of $p$ and assume the following:
 \begin{enumerate}
  \item there is an lc surface pair $(X',D')$ and a distinguished point $p' \in D'$;
  \item there is an action of $G$ on $X'$ preserving $D'$ and fixing only $q$, and
  \item $X=X'/G$, $D=D'/G$ and the quotient map sends $p'$ to $p$.
 \end{enumerate}
Since $D$ is smooth, we know that $G$ swaps the two branches of $D'$. Let $H$ be the normal subgroup of $G$ which preserves the branches of $D'$.
Then the pair $(X'/H,D'/H)$ is lc, the map $\pi:X' \to X'/H$ is étale away from $q$, and $D'/H$ is a nodal curve. But then from Lemma \ref{Lemma:AH:stack:iso:along:nodes}
the log canonical divisor $L$ of $(X'/H,D'/H)$ is Cartier. Since $\pi$ is étale in codimension one, $K_{X'}+D'\cong \pi^*L$. In particular, $H$ acts trivially on $(K_{X'}+D')_p$. Then $H=\{1\}$ since the action of $G$ on $(K_{X'}+D')_p$ is faithful (recall that $\cX \to B\bG_m$ is representable).
Therefore, any non-zero element of $G$ swaps the branches of $D'$ and $G \cong \bmu_2$. 
\end{proof}
\begin{Remark}
For other relations between the singularities of $X$ along $D$ and the different on $D$, one can consult \cite{Kollarsingmmp}*{Theorem 3.36}.
\end{Remark}
\begin{EG}
Consider $(\mathbb{A}^2_{u,v},D':=V(uv))$, with the action of $\mathbb{Z}/2\mathbb{Z}$ that sends $(u,v)\mapsto (-u,-v)$. Then $D'$ is invariant and the quotient is the surface pair $(X:=\spec(k[x,y,z]/xy-z^2),D:=V(z))$.
The divisor $D$ is still a nodal curve and the pair is lc since it is the quotient of an lc pair (see \cite{Kollarsingmmp}*{Corollary 2.43}).
Consider now the action of $G:=\bmu_2$ on $X$, which sends $(x,y,z) \mapsto (y,x,-z)$.
The action is free in codimension one and preserves $D$ so $(X/G,D/G)$ is again lc.

The log-canonical divisor on $X/G$ is not Cartier. Indeed, it suffices to show that the generator of $G$ acts nontrivially on a section of the log canonical divisor on $X$.
By adjunction, $\omega_X=(\omega_{\mathbb{A}^3}(X))\big|_{X}$ so a section of $\omega_X$ is given by $(\frac{dx \wedge dy \wedge dz}{xy-z^2})\big|_{X}$ which is invariant under $G$.
It follows that $G$ acts on a generator for $\omega_X(D)$ by $$\frac{dx \wedge dy \wedge dz}{xyz-z^3} \mapsto -\frac{dx \wedge dy \wedge dz}{zxy-z^3}$$
so the log canonical divisor is not Cartier on the quotient. Then the twisted pair associated to $(X/G,D/G)$ is $([X/G],[D/G])$, the divisor $D/G$ is smooth, but $[D/G]$ is not. By Lemma \ref{Lemma:relation:singularities:different}, we may conclude that the fixed point $p$ has different $1$ on $D/G$.

One can argue indirectly that $p$ must have different 1. Indeed the different is an effective divisor with coefficient at most 1. If it was not 1, then $(D/G,\Diff_{D/G}(0))$ would be klt and $(X/G,D/G)$ would be plt by inversion of adjunction. Then from \cite{Kollarsingmmp}*{Corollary 2.43}, the pairs $(X,D)$ and $(\mathbb{A}^2_{u,v},V(uv))$ would be plt but this is not the case.
\end{EG}

Finally we study how the normalization map $\cD^n \to \cD$ behaves around the node point $p \in \cD$ in the situation of point (2) of Lemma \ref{Lemma:relation:singularities:different}. 

\begin{Cor}\label{Cor:the:normalization:of:a:stacky:curve:when:we:swap:branches:is:a:scheme}
 Suppose that $(\cX,\cD)$ is an lc twisted surface pair and $p \in \cD$ is a node lying over a smooth point $q$ of $D$. Let $\cD^n$ be the normalization of $\cD$. Then the composition $\cD^n \to \cD \to D$ is an isomorphism in a neighbourhood of each $q$.
\end{Cor}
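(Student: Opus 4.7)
The plan is to reduce to an explicit calculation in an étale-local (or formal) neighborhood of $p$, using the stabilizer description supplied by Lemma \ref{Lemma:relation:singularities:different}.

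First I would invoke Lemma \ref{Lemma:relation:singularities:different}(2) to identify $G := \Aut_\cX(p) \cong \bmu_2$, acting by swapping the two branches of $\cD$ at $p$. By the standard étale-local structure of separated DM stacks at a closed point, there is an étale presentation $[U/\bmu_2] \to \cD$ around $p$, with $U$ a scheme, a single node $\tilde p$ lying above $p$, and $\bmu_2$ fixing $\tilde p$ while swapping its analytic branches. Shrinking and passing to the henselization (or completion) at $\tilde p$, I would model $U$ as $\spec(k[\![x,y]\!]/(xy))$ with the action $x\leftrightarrow y$; correspondingly, $D$ is modeled étale-locally by $\spec\bigl((k[\![x,y]\!]/(xy))^{\bmu_2}\bigr)=\spec(k[\![x+y]\!])$, which is consistent with the smoothness of $q$.

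Next, since normalization commutes with étale base change, $\cD^n$ pulls back étale-locally to $[U^n/\bmu_2]$, where $U^n = \spec(k[\![x]\!])\sqcup\spec(k[\![y]\!])$ is the normalization of $U$. The induced $\bmu_2$-action swaps the two components and is therefore \emph{free}, so the stack quotient $[U^n/\bmu_2]$ is in fact a scheme, isomorphic to a single component $\spec(k[\![x]\!])$. In particular, $\cD^n$ has a unique point above $p$ and is smooth there.

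Finally, I would identify the composition $\cD^n \to D$ on functions: it corresponds to the ring homomorphism $k[\![x+y]\!] \hookrightarrow k[\![x]\!]$ determined by sending the invariant $x+y$ to its restriction on the branch $V(y) \cong \spec(k[\![x]\!])$, namely $x+0=x$. This map is evidently an isomorphism, hence $\cD^n \to D$ is an isomorphism on a Zariski neighborhood of $q$ (the property of being an isomorphism being étale-local on the target). The only substantive step is arranging the étale-local presentation with the specified $\bmu_2$-action, but this is standard once the stabilizer has been pinned down by Lemma \ref{Lemma:relation:singularities:different}(2) and no further input is required.
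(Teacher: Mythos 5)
Your proposal is correct and follows essentially the same route as the paper: reduce via étale charts (with the $\bmu_2$ swap action pinned down by Lemma \ref{Lemma:relation:singularities:different}(2)) to the explicit model $[\spec(k[x,y]/xy)/\bmu_2]$, use that normalization commutes with étale base change so that $\cD^n$ becomes the free quotient of the two branches and hence a scheme, and conclude by the ring computation $t \mapsto x+y \mapsto x$.
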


\begin{proof} It suffices to check the required map is an isomorphism after taking \'etale charts so without loss of generality, we may suppose that $\cD$ is isomorphic to
 $[\spec(k[x,y]/xy)/\bmu_2]$ with the action that sends $x \mapsto y$ and $y \mapsto x$. Since normalization commutes with étale
 base change, we have that the normalization $\cD^n$ is the quotient
 $$[\spec(k[x]) \sqcup \spec(k[y])/\bmu_2]$$ with the action $(x,y) \mapsto (y,x)$. Then $\cD^n$ is just isomorphic to the scheme $\spec( k[z])$ by the map $x,y \mapsto z$, and we can write the morphisms $\cD^n \to \cD \to D$ as 
  \begin{align*}\spec(k[x]) &\cong [\spec(k[x]) \sqcup \spec(k[y])/\bmu_2] \to \\ & \to [\spec(k[x,y]/xy)/\bmu_2] \to \spec(k[x+y]) \cong \spec(k[t]).\end{align*}
   The composition is the morphism induced by $t \to x+y \to x$ which is an isomorphism.\end{proof}

\section{Gluing morphisms for families of twisted stable surfaces}\label{Section:gluing}
In this section we produce
gluing morphisms which describe the boundary of the moduli of twisted stable surfaces in terms of moduli of twisted stable surface pairs (see Theorem \ref{thm:intro3}).
More precisely, in Subsection \ref{subsec:gluing:map}, we prove that one can functorially glue a family of twisted stable surface pairs over an arbitrary base to obtain a family of twisted stable surfaces.
In Subsection \ref{subsec:gluing:data}, we define an algebraic stack $\cG_{2,b}$ of gluing data. Finally, in Subsection
\ref{subsec:boundary:strata} we show that the image of $\cG_{2,v} \to \cK_{2,v}$ stratifies $\cK_{2,v}$ into finitely many boundary components.

\subsection{Gluing morphisms for twisted stable surfaces}\label{subsec:gluing:map} We begin this subsection by recalling some of the results
in \cite{Kollarsingmmp}*{Chapter 5}.

Consider an slc stable surface $X'$ and let $X$ be its normalization with conductor $D \subseteq X$. Since $X'$ is nodal in codimension 1, the normalization $X \to X'$ induces a rational and generically fixed point free involution $D \dashrightarrow D$. By \cite{Kollarsingmmp}*{Proposition 5.12}, this gives a generically fixed point free involution on the normalization $\tau:D^n \to D^n$ which preserves the different. It follows from \cite{Kollarsingmmp}*{Theorem 5.13} that the converse also holds. Namely, the data of a stable lc pair $(X,D)$ and a generically fixed point free involution $D^n \to D^n$ which preserves the different determine a unique stable surface $X'$. The goal of this subsection is to understand \cite{Kollarsingmmp}*{Theorem 5.13} in terms of twisted stable surfaces, which will give us a generalization to families over arbitrary bases.

We begin with two auxiliary results:

\begin{Lemma}\label{Lemma:we:have:tau:on:the:stack}
Let $(\cX,\cD)$ be a normal AH twisted stable surface pair, and let $\cD^n$ be the normalization of $\cD$. Denote the coarse space of $\cD^n$ by $D^n$ and let $\tau:D^n \to D^n$ be a different preserving involution on $D^n$. Then there is a unique involution
$\sigma:\cD^n \to \cD^n$ lifting $\tau$:
$$\xymatrix{\cD^n \ar[r]^\sigma \ar[d]_{\text{coarse space}} & \cD^n \ar[d]^{\text{coarse space}}\\
D^n \ar[r]_\tau & D^n}.$$
\end{Lemma}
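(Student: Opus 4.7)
The strategy is to identify $\cD^n$ as an iterated root stack of $D^n$ along the support of the different, and then invoke the universal property of root stacks together with the hypothesis that $\tau$ preserves the different.

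First, I describe the coarse moduli map $\pi : \cD^n \to D^n$ locally. Combining Lemma~\ref{Lemma:AH:stack:iso:along:nodes} (which forces $\cX = X$ and $\cD = D$ over nodes of $D$) with Corollary~\ref{Cor:the:normalization:of:a:stacky:curve:when:we:swap:branches:is:a:scheme} (which handles nodes of $\cD$ lying over smooth points of $D$), the only points at which $\pi$ fails to be an isomorphism are smooth points of $\cD^n$ mapping to smooth points of $D$. At such a point, Lemma~\ref{Lemma:relation:singularities:different}(1) identifies the stabilizer as $\bmu_r$, where $1-1/r$ is the multiplicity of the different at the image.

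Next I identify $\cD^n$ with a root stack. Writing $\operatorname{Diff}_{D^n}(0) = \sum_i (1 - 1/r_i) q_i + \Delta$ with $\Delta$ of coefficient $1$ supported on the node-preimage locus, the iterated root stack $\cD' := \sqrt[r_i]{(q_i / D^n)}$ has coarse space $D^n$ and, by the previous step, is \'etale-locally isomorphic to $\cD^n$ with the same cyclic stabilizer at each $q_i$. Since both are smooth tame orbifold curves with the same stabilizer data, the local classification of such stacks in characteristic zero yields a canonical isomorphism $\cD^n \cong \cD'$.

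Finally I construct $\sigma$. Because $\tau$ preserves the different, it permutes the $q_i$ while fixing the orders $r_i$, so $\tau^* \cO_{D^n}(q_i) \cong \cO_{D^n}(q_{\tau^{-1}(i)})$ with matching root order. The universal property of root stacks then produces, from $\tau$ together with the tautological $r_i$-th roots coming from $\cD^n \cong \cD'$, a unique morphism $\sigma : \cD^n \to \cD^n$ lifting $\tau$. Applying the same uniqueness to $\sigma^2$ and $\operatorname{Id}_{\cD^n}$---which both lift $\tau^2 = \operatorname{Id}$---gives $\sigma^2 = \operatorname{Id}$, while uniqueness of the lift itself is the uniqueness clause of the lemma.

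The main obstacle is the root-stack identification $\cD^n \cong \cD'$: one must ensure that the stabilizer data obtained locally in the first step genuinely determines the global stack structure on $\cD^n$, which is not automatic in general but does hold for smooth tame orbifold curves in characteristic zero, and there the identification can be made canonical rather than merely abstract.
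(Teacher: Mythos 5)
Your proposal is correct and takes essentially the same approach as the paper: both use Lemma \ref{Lemma:AH:stack:iso:along:nodes}, Corollary \ref{Cor:the:normalization:of:a:stacky:curve:when:we:swap:branches:is:a:scheme} and Lemma \ref{Lemma:relation:singularities:different} to reduce to the smooth stacky points, whose stabilizer orders are determined by the different and hence preserved by $\tau$, and then lift $\tau$ via the universal property of root stacks. The only (harmless) difference is that you identify $\cD^n$ globally as an iterated root stack of $D^n$, whereas the paper extends the involution formally locally across each such point using the same universal property.
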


\begin{proof}
It follows from Corollary \ref{Cor:the:normalization:of:a:stacky:curve:when:we:swap:branches:is:a:scheme} and Lemma \ref{Lemma:relation:singularities:different}
that the coarse space morphism $\cD^n \to D^n$ is an isomorphism away from the points where the different has a coefficient $0 < c < 1$.
These correspond to smooth points of $\cD$ with nontrivial stabilizer and it suffices check that we can extend the morphism along such points. 

From Lemma \ref{Lemma:relation:singularities:different}, two points with stabilizers of different orders cannot be swapped.
Locally around such points on the coarse space, we have an involution $\tau$ of $Y:= \spec(k [\![s]\!])$ that we wish to extend uniquely to an
involution of the $m^{th}$ root stack $\cY:=[\spec(k[\![t]\!])/(t \mapsto \zeta_m t)]$ for some $m$.
Here $\zeta_m$ is a primitive $m^{th}$ root of unity and $t^m = s$.
The required assertion follows from the universal property of root stacks (see \cite{Ols16}*{10.3}). Such a root stack is the universal objects for triples $(Y,L,v)$ where $Y$ has a morphism $f:Y \to \spec(k[\![s]\!])$, $L$ is a line bundle on $Y$, and $v$ is a section of $L$. Moreover, we require that there is an isomorphism $L^{\otimes m}  \to f^*\cO_{\spec(k[\![s]\!])} $ sending $v^{\otimes m} \mapsto s$.
\end{proof}
\begin{Notation}Let $\nu:\cD^n \to \cD$ be the normalization morphism. We will denote the nodes of $\cD$ by $\cN$ and set $\Delta:=\nu^{-1}(\cN)$. Then $\Delta$ is a scheme from Lemma \ref{Lemma:AH:stack:iso:along:nodes} and Corollary \ref{Cor:the:normalization:of:a:stacky:curve:when:we:swap:branches:is:a:scheme}.
\end{Notation}

\begin{Lemma}\label{Lemma:we:have:the:map:on:D:divided:by:tau:over:the:pt}
Consider a normal twisted stable surface pair $(\cX,\cD)$. Let $\cD^n$ be the normalization of $\cD$ and let $\tau:\cD^n \to \cD^n$ be an involution on $\cD^n$ which is generically fixed point free and preserves $\Delta$. Let $X$ and $D$ be the coarse spaces of $\cX$ and $\cD$ respectively. Consider the stable surface $X'$ obtained from $X$, $D$ and $\tau$ using \cite{Kollarsingmmp}*{Theorem 5.13}. Let $\cX'$ be the AH stack
associated to $(X',\omega_{X'})$. Then $\cX$ is the normalization of $\cX'$ and there is a map $h:[\cD^n/\tau] \to \cX'$ which makes the following diagram commute.\footnote{For properties of group quotients of DM stack, see \cite{Rom05}.}
$$ \xymatrix{\cD^n \ar[r] \ar[d] & \cX \ar[d] \\
[\cD^n/\tau] \ar[r]^-h & \cX'}$$
\end{Lemma}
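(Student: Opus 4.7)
My strategy is to first construct the morphism $\cX \to \cX'$ from the functoriality of the AH construction, then verify it realizes $\cX$ as the normalization of $\cX'$, and finally construct $h$ as the descent of a $\sigma$-invariant morphism $\cD^n \to \cX'$ to the stacky quotient $[\cD^n/\tau]$.

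To build the morphism $\cX \to \cX'$, I would appeal to the characterization of maps to AH stacks from Proposition \ref{Prop:functor:morphism:to:the:AHstack} (extended from schemes to DM stack sources by \'etale descent). A lift of the composition $\cX \to X \to X'$ to $\cX'$ is specified by a line bundle $\cG$ on $\cX$ together with a graded $\cO$-algebra homomorphism $\bigoplus_n \pi^*(\omega_{X'}^{[n]}) \to \bigoplus_n \cG^{\otimes n}$. I would take $\cG = \cL$, the polarizing line bundle on $\cX$, and use that Koll\'ar's adjunction for the normalization of an slc surface (cf. \cite{Kollarsingmmp}*{Theorem 5.13}) canonically identifies the reflexive pullback of $\omega_{X'}^{[n]}$ along $\pi$ with $\cO_X(n(K_X+D))^{[n]}$, together with Theorem \ref{Teo:AH} identifying the latter with $p_*(\cL^{\otimes n})$. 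To verify that the resulting map $\cX \to \cX'$ is the normalization, I would observe that it is finite (since $\cX \to X$ and $X \to X'$ are finite, and $\cX' \to X'$ is a coarse moduli map) and is an isomorphism on the open locus where $X'$ is smooth and disjoint from the image of $D$, hence birational; together with the normality of $\cX$, the universal property of normalization identifies $\cX$ with the normalization of $\cX'$.

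Next, for the morphism $h$, I would form the composition $g : \cD^n \to \cD \to \cX \to \cX'$ and show it is invariant under the canonical lift $\sigma : \cD^n \to \cD^n$ of $\tau$ given by Lemma \ref{Lemma:we:have:tau:on:the:stack}, so that it descends to $h : [\cD^n/\tau] \to \cX'$ via the universal property of quotient stacks (\cite{Rom05}). After post-composing with $\cX' \to X'$, the $2$-equality $g \circ \sigma \cong g$ reduces to the $\tau$-invariance of the induced map $D^n \to X'$, which is built into the construction of $X'$ in \cite{Kollarsingmmp}*{Theorem 5.13}. To lift this to an equality of morphisms into $\cX'$, I would again invoke Proposition \ref{Prop:functor:morphism:to:the:AHstack}: morphisms $\cD^n \to \cX'$ over a fixed coarse map are classified by line-bundle-plus-graded-algebra data, and by Corollary \ref{Cor:different:only:from:double:locus:arbitrary:dim} the pullback of the polarizing bundle of $\cX'$ along $g$ is identified with $\omega_{\cD^n}(\Delta)$, a datum preserved by the different-preserving involution $\sigma$.

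The main obstacle I anticipate is this last step: upgrading the coarse-space $\tau$-invariance of $D^n \to X'$ to stack-level $\sigma$-invariance of $\cD^n \to \cX'$. This requires a careful tracking of how the polarization transforms under $\sigma$ across the nodes of $\cD$ and the smooth points of $\cD$ with nontrivial stabilizer, and will likely proceed \'etale-locally using the explicit descriptions of $\cX$ around these points supplied by Lemma \ref{Lemma:AH:stack:iso:along:nodes} and Lemma \ref{Lemma:relation:singularities:different}.
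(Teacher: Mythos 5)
Your construction of the map $\cX \to \cX'$ is a workable variant of the paper's argument: the paper instead takes the normalization $\cY \to \cX'$, notes $p_*(\nu^*\omega_{\cX'}^{\otimes n}) \cong (\omega_X(D))^{[n]}$ by Koll\'ar's gluing theory, and identifies $\cY \cong \cX$ via Lemma \ref{Lemma:iso:to:AH:stack:if:P(F):scheme}, using representability of $\cY \to B\bG_m$; your route (build the lift of $\cX \to X \to X'$ by Proposition \ref{Prop:functor:morphism:to:the:AHstack} with $\cG = \cL$, then argue finite plus birational onto a normal target) uses the same identifications in the opposite order. One point you gloss over: ``finite'' here must mean representable, and representability of $\cX \to \cX'$ (equivalently injectivity on stabilizer groups) is exactly what the paper extracts from faithfulness of the stabilizer action on $\cL$, i.e.\ the pullback of $\omega_{\cX'}$; without it, finite--birational--normal does not yield an isomorphism with the normalization (a root stack over a smooth curve is proper, quasi-finite and birational but not an isomorphism). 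This is fixable, but it needs to be said.

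The genuine gap is in the descent step producing $h$. To descend $g:\cD^n \to \cX'$ to $[\cD^n/\tau]$ you need not merely some $2$-isomorphism $g\circ\sigma \cong g$ but equivariance data satisfying the cocycle condition for the $\bZ/2$-action, and your proposal addresses neither point. Your plan to compare $g$ and $g\circ\sigma$ through Proposition \ref{Prop:functor:morphism:to:the:AHstack} only checks that the pulled-back polarizing bundle $\omega_{\cD^n}(\Delta)$ is abstractly preserved by $\sigma$; in that classification a morphism is a pair (line bundle, graded algebra homomorphism), and two lifts are isomorphic only via a line bundle isomorphism intertwining the two algebra homomorphisms. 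That compatibility is precisely where the content lies (it is the same subtlety that forces the twist by $\chi$ in Proposition \ref{Prop:line:bundle:which:gives:rep:morphism:Ddividedbytau:to:BGm}), and you explicitly defer it as ``the main obstacle,'' so the proof is not complete as written. The paper avoids all of this with a soft rigidity argument: $f$ and $f\circ\tau$ agree on a dense open subscheme of $\cD^n$ (where everything is a scheme and agreement is the coarse-level $\tau$-invariance from Koll\'ar's construction), hence agree everywhere by \cite{DH18}*{Lemma 7.3}; since the resulting $2$-isomorphism is unique by separatedness, the cocycle condition is automatic and $h:[\cD^n/\tau]\to\cX'$ exists. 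That rigidity lemma, or an equivalent uniqueness statement, is the missing idea you would need to close your argument.
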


\begin{proof} Recall that $\cX'$ is a scheme away from a finite set of points. Therefore the normalization $\nu:\cY \to \cX'$ has $X$ as coarse space, and if we denote by $p:\cY \to X$ the coarse space map, $p_*(\nu^* \omega_{\cX'}^{\otimes n}) \cong (\omega_{X}(D))^{[n]}$ for every $n$ by \cite{Kollarsingmmp}*{5.7.1}. But $\omega_{\cX'}$ is a uniformizing line bundle for $\cX'$, and $\nu$ is representable, so the morphism $\cY \to B\bG_m$ induced by $\nu^*(\omega_{\cX'})$ is also representable. It follows from Lemma \ref{Lemma:iso:to:AH:stack:if:P(F):scheme} that $\cY \cong \cX$. In particular we have a map $\cX \to \cX'$ which induces $f:\cD^n \to \cX'$. There is an open dense subset $U$ of $\cD^n$ where $\cD^n$ is a scheme and the two maps $f\big|_{U}, (f \circ \tau)\big|_{U} :U \to \cX'$ agree. By \cite{DH18}*{Lemma 7.3}, the arrows $f$ and $f \circ \tau$ agree and so they induce a map $h:[\cD^n /\tau] \to \cY$ making the diagram commute.
\end{proof}

The canonical line bundle $\omega_{\cX'}$ induces a map $\cX' \to B\bG_m$ and we obtain a map $[\cD^n/\tau] \to B\bG_m$ given by composition with the map $h$ from Lemma \ref{Lemma:we:have:the:map:on:D:divided:by:tau:over:the:pt}. We wish to understand this map $[\cD^n/\tau] \to B\bG_m$ in terms of $\cX$, $\cD^n$ and $\tau$. First, observe that $\tau$ induces a map $\psi:[\cD^n/\tau] \to B\bmu_2$. We can identify $\Pic(B\bmu_2)$ with the characters of $\bmu_2$ and we denote by $\chi_{B\bmu_2}$ the line bundle corresponding to the sign representation. Given a morphism $\cY \to B\bmu_2$, we will denote by $\chi_{\cY}$ the pull back of $\chi_{B\bmu_2}$ to $\cY$. 

Now, the involution $\tau$ naturally acts on
$\omega_{\cD^n}(\Delta)$ since $\tau$ preserves the different $\Delta$. Thus $\omega_{\cD^n}(\Delta)$ descends to a line bundle $\cG$ on $[\cD^n/\tau]$.

\begin{Prop}\label{Prop:line:bundle:which:gives:rep:morphism:Ddividedbytau:to:BGm} In the situation above, the line bundle corresponding to $[\cD^n/\tau] \to B\bG_m$ is $\cG \otimes \chi_{[\cD^n/\tau]}$.
\end{Prop}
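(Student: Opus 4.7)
The plan is to identify $h^*\omega_{\cX'}$ on $[\cD^n/\tau]$ by first computing the underlying line bundle after pullback to $\cD^n$, and then comparing the two $\tau$-equivariant structures.

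First, I would identify the underlying line bundle of $h^*\omega_{\cX'}$ on $\cD^n$. Using the factorization $\cD^n \to \cX \to \cX'$ from Lemma \ref{Lemma:we:have:the:map:on:D:divided:by:tau:over:the:pt}, I can pull $\omega_{\cX'}$ back along the normalization, which, by the standard adjunction formula for normalizations of nodal varieties, gives $\omega_{\cX}(\cD)$. Restricting to $\cD$ yields $\omega_{\cD}$ by Corollary \ref{Cor:different:only:from:double:locus:arbitrary:dim}, and pulling back further along $\cD^n \to \cD$ produces $\omega_{\cD^n}(\Delta)$, as explained in the remark after that corollary. In particular this matches the underlying line bundle of $\cG$.

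Next, I would observe that $h^*\omega_{\cX'}$ and $\cG$ are both line bundles on $[\cD^n/\tau]$ whose pullbacks to $\cD^n$ give the same line bundle $\omega_{\cD^n}(\Delta)$. Their ratio is therefore a line bundle on $[\cD^n/\tau]$ whose pullback to $\cD^n$ is trivial; such line bundles correspond to characters of the generic stabilizer $\bmu_2$ of the quotient and so equal either $\cO_{[\cD^n/\tau]}$ or $\chi_{[\cD^n/\tau]}$. Hence $h^*\omega_{\cX'} \cong \cG \otimes \chi_{[\cD^n/\tau]}^{\epsilon}$ for some $\epsilon \in \{0,1\}$, and it remains to show $\epsilon = 1$.

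Finally, I would determine $\epsilon$ by an \'etale-local computation at a generic point of a component of the gluing locus of $\cX'$, where $\cX'$ is of the form $\spec k[x,y,z]/(xy)$ with $\omega_{\cX'}$ generated by $\omega = \frac{dx \wedge dy \wedge dz}{xy}$. Taking iterated Poincar\'e residues along the two branches of $\cX$ and then along the two components of $\cD$ yields generators on $D_1$ and $D_2$ which differ by a sign, owing to the antisymmetry of the wedge product. Since $\tau$ swaps these two components via the identity on the coordinate along the gluing locus, the resulting $\tau$-equivariant structure on $\omega_{\cD^n}(\Delta)$ coming from $\omega_{\cX'}$ differs from the natural one (which defines $\cG$ by pullback of forms together with $\tau^{-1}(\Delta) = \Delta$) by the sign character. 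Therefore $\epsilon = 1$, proving the proposition.

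The main obstacle will be handling the sign conventions carefully: making precise the canonical identification $\nu^*\omega_{\cX'} \cong \omega_{\cX}(\cD)$ via the Poincar\'e residue and simultaneously pinning down the natural equivariant structure on $\omega_{\cD^n}(\Delta)$ used to define $\cG$, so that the comparison of the two equivariant structures indeed yields the sign character rather than the trivial one.
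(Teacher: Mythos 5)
Your step 1 (pulling $\omega_{\cX'}$ back along $\cD^n \to \cX \to \cX'$ and identifying the underlying bundle with $\omega_{\cD^n}(\Delta)$ via Proposition \ref{Prop:no:different:on:stack}) is exactly the paper's first step, and your residue computation at a generic point of the double locus is a correct hands-on substitute for the paper's citation of Koll\'ar's Proposition 5.8, which is precisely the statement that the generator of $\omega_{\cX'}$ restricts to opposite residues on the two branches. The gap is in your intermediate step. First, the generic stabilizer of $[\cD^n/\tau]$ is \emph{trivial}, not $\bmu_2$, since $\tau$ is generically fixed point free; the kernel of $\Pic([\cD^n/\tau]) \to \Pic(\cD^n)$ is computed by $H^1(\bmu_2, H^0(\cO_{\cD^n}^*))$, which is a product of sign ambiguities indexed by the $\tau$-stable connected components, so the dichotomy ``$\cO$ or $\chi$'' only makes sense component by component. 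Second, and more seriously, you cannot determine $\epsilon$ by an \'etale-local computation at a single point: on components where $\cD^n \to [\cD^n/\tau]$ is \'etale, $\cO$ and $\chi_{[\cD^n/\tau]}$ both pull back trivially and are \'etale-locally isomorphic, so no computation in a local model can distinguish them. A local model only sees the descent datum, not the isomorphism class of a line bundle.

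The fix is to drop the ``$\cO$ or $\chi$'' reduction and argue with equivariant structures directly, as the paper does: both $h^*\omega_{\cX'}$ and $\cG \otimes \chi_{[\cD^n/\tau]}$ correspond to isomorphisms $\alpha : \sigma^*\omega_{\cD^n}(\Delta) \to p_2^*\omega_{\cD^n}(\Delta)$ on the same underlying bundle, and two such isomorphisms differ by a global unit on $\cD^n$. Your residue computation (carried out at the generic point of \emph{every} component of the double locus, not just one) shows this unit equals $1$ at all generic points of $\cD^n$, hence everywhere since $\cD^n$ is reduced and separated; this is the paper's ``it suffices to restrict to a dense open'' step. With that replacement your argument is complete and is essentially the paper's proof, with the explicit computation of the sign via iterated Poincar\'e residues playing the role of \cite{Kollarsingmmp}*{Proposition 5.8}.
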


\begin{proof} Let $\pi:\cD^n \to [\cD^n/\tau]$ be the quotient map, let $\cH:=h^*\omega_{\cX'}$, and let $\sigma:\bmu_2 \times \cD^n  \to \cD^n$ be the action induced by $\tau$. The key ingridient is \cite{Kollarsingmmp}*{Proposition 5.8}.

From the diagram of Lemma \ref{Lemma:we:have:the:map:on:D:divided:by:tau:over:the:pt} and Proposition \ref{Prop:no:different:on:stack}, we have $\pi^*\cH \cong \omega_{\cD^n}(\Delta)$. The line bundle $\cH$ is uniquely determined by the isomorphism $\alpha:\sigma^*( \omega_{\cD^n}(\Delta)) \to p_2^*(\omega_{\cD^n}(\Delta))$ (see \cite{Rom05}*{Example 4.3}). To determine $\alpha$, it suffices to restrict $\cH$ to a dense open subset of $\cD^n$: we can assume that $\cX$ and $[\cD^n/\tau]$ are schemes. Now from \cite{Kollarsingmmp}*{Proposition 5.8}, the sections of $\cH$ and those of $\cG \otimes \chi_{B\bmu_2}$ agree so $\cH \cong \cG \otimes \chi_{B\bmu_2}$.
\end{proof}

\begin{Notation}
 We will denote by $\cC$ the relative coarse moduli space of the map $[\cD^n/\tau] \to B\bG_m$ induced by $\cG \otimes \chi_{[\cD^n/\tau]}$.
\end{Notation}

Now, since $\tau$ sends $\Delta$ to itself, we can factor $\Delta \to \cC$ as $\Delta \to [\Delta/\tau] \to [\cD^n/\tau] \to \cC$ and we have the following commutative diagram. 
$$\xymatrix{\Delta \ar[dd] \ar[dr]\ar[rr] & & \cN \ar[dr] & & & \\
& \cD^n \ar[dd] \ar[rr]^\nu & & \cD \ar[r]  &\cX  & \\
[\Delta/\tau]\ar[dr]^-f & & & & & \\
& \cC & &  &  & }$$

\begin{Lemma}\label{Lemma:right:stacky:structure:on:quotient:of:delta:to:have:closed:embedding}
 The map $f:[\Delta/\tau] \to \cC$ is a closed embedding. In particular, the composition $[\Delta/\tau] \to \cC \to B\bG_m$ is representable.
\end{Lemma}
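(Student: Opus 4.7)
The plan is to show $f \colon [\Delta/\tau] \to \cC$ is a proper monomorphism of DM stacks; this forces it to be a closed embedding. First, since $\tau$ preserves the nodes $\cN$ of $\cD$, it also preserves $\Delta = \nu^{-1}(\cN)$, so the natural map $[\Delta/\tau] \hookrightarrow [\cD^n/\tau]$ is a closed embedding. Composing with the proper morphism $[\cD^n/\tau] \to \cC$ (a relative coarse moduli morphism is proper and a universal homeomorphism on geometric points), one obtains that $f$ is proper and injective on geometric points.

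The heart of the argument is injectivity on stabilizers. Because $\Delta$ is a scheme (by Lemma \ref{Lemma:AH:stack:iso:along:nodes} and Corollary \ref{Cor:the:normalization:of:a:stacky:curve:when:we:swap:branches:is:a:scheme}), the stabilizer of a geometric point of $[\Delta/\tau]$ is either trivial or $\bmu_2$, the latter occurring precisely at $\tau$-fixed points of $\Delta$. By Proposition \ref{Prop:line:bundle:which:gives:rep:morphism:Ddividedbytau:to:BGm} together with the universal property of the relative coarse moduli, $\cC$ is obtained from $[\cD^n/\tau]$ by rigidifying exactly the inertia subgroups that act trivially on $\cG \otimes \chi_{[\cD^n/\tau]}$. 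At a $\tau$-fixed point of $\Delta$, the $\bmu_2$-action on $\chi_{[\cD^n/\tau]}$ is the sign character by the very definition of $\chi$, and so it is faithful. Hence $\bmu_2$ acts faithfully on $\cG \otimes \chi_{[\cD^n/\tau]}$, independently of how it acts on $\cG$, and the stabilizer of any geometric point of $[\Delta/\tau]$ injects into its image in $\cC$.

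Combining these two steps, $f$ is a proper monomorphism of Noetherian DM stacks, hence finite and a closed embedding. For the \emph{in particular} claim: $\cC \to B\bG_m$ is representable by the defining property of the relative coarse moduli, so the composition $[\Delta/\tau] \to \cC \to B\bG_m$ — a closed embedding followed by a representable morphism — is representable. The main obstacle I anticipate is making the rigidification description of $\cC$ precise enough to check the stabilizer condition, namely verifying that the kernel of the inertia map $I_{[\cD^n/\tau]} \to \bG_m$ is exactly what $[\cD^n/\tau] \to \cC$ kills; once this is in place, the rest of the argument is essentially formal from the computation of stabilizers of $[\Delta/\tau]$.
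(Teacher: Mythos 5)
The central step of your argument contains a genuine error. You claim that at a $\tau$-fixed point of $\Delta$ the stabilizer $\bmu_2$ acts faithfully on $\cG \otimes \chi_{[\cD^n/\tau]}$ ``independently of how it acts on $\cG$.'' This is false: the stabilizer is $\bmu_2$, so its action on the fiber of $\cG$ is either trivial or the sign character, and if it were the sign character then the two signs would cancel and the action on $\cG \otimes \chi_{[\cD^n/\tau]}$ would be \emph{trivial}, in which case the relative coarse space map $\psi:[\cD^n/\tau] \to \cC$ would kill that stabilizer and $f$ would fail to be a closed embedding (indeed $[\Delta/\tau]\to B\bG_m$ would not even be representable there). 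The substantive content of the lemma, and the reason the twist by $\chi$ is the correct one, is precisely the local computation you skipped: formally locally at a fixed point $q \in \Delta$ one has $\cD^n \cong \spec(k[\![z]\!])$ with $\tau: z \mapsto -z$, and since $q$ lies in $\Delta$ the sheaf $\omega_{\cD^n}(\Delta)$ is generated by $dz/z$, which is $\tau$-invariant; hence the $\bmu_2$-action on the fiber of $\cG$ is trivial, and only then does the sign character on $\chi$ give faithfulness on the tensor product (so that $\psi$ is an isomorphism at $\pi(q)$ by \cite{AH}*{Proposition 2.3.10}). Note that the invariance of $dz/z$ uses $q \in \Delta$ essentially: the generator $dz$ of $\omega_{\cD^n}$ is anti-invariant, so away from $\Delta$ the conclusion genuinely fails.

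There is also a gap in your reduction of ``closed embedding'' to properness plus injectivity on geometric points and on stabilizers. That package is weaker than being a monomorphism (the normalization of a cuspidal curve is proper, bijective on points and injective on the trivial stabilizers, yet not a closed immersion), and for a closed immersion you in fact need the stabilizer maps to be isomorphisms, not merely injective (e.g.\ $\spec(k) \to B\bmu_2$ is proper, injective on points and on stabilizers, but not a closed embedding). The paper's route avoids all of this: since $[\Delta/\tau] \to [\cD^n/\tau]$ is a closed embedding (as $\Delta \subset \cD^n$ is $\tau$-invariant and closed), it suffices to prove that $\psi:[\cD^n/\tau]\to\cC$ is an isomorphism in a neighbourhood of the image of $[\Delta/\tau]$ --- trivially at non-fixed points of $\tau$, where there is no stabilizer at all, and at fixed points by the $dz/z$ computation above. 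If you repair the faithfulness step, I would recommend restructuring your proof along these lines rather than trying to verify a monomorphism criterion directly.
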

\begin{proof}

 Since $\Delta \to \cD^n$ is a closed embedding, then $[\Delta/\tau] \to [\cD^n/\tau]$ is also a closed embedding. Therefore the claim follows if we can prove that $\psi:[\cD^n/\tau] \to \cC$ is an isomorphism along $[\Delta/\tau]$. Let $\pi:\cD^n \to [\cD^n/\tau]$ be the quotient map.
 
 If $q \in \Delta$ is such that $\tau(q) \neq q$, then $\pi(q)$ has no stabilizer so $\psi$ is necessarily an isomorphism in a neighboourhood of $\pi(q)$. Assume then that $\tau(q)=q$. Formally locally around $q$, we can replace $\cD^n$ with $\spec(k[\![z]\!])$ and assume the involution $\tau$ sends $z \mapsto -z$ where $z$ is a uniformizer for $q$. Since $dz/z$ is a generator of $\omega_{\cD^n}(\Delta)$, $\tau$ acts trivially on the fiber of $\omega_{\cD^n}(\Delta)$ at $q$. Therefore, $\tau$ acts
 faithfully on the fiber of $\cG \otimes \chi_{[\cD^n/\tau]}$ at $q$ and so by \cite{AH}*{Proposition 2.3.10}, $\psi$ is an isomorphism at $\pi(q)$.
\end{proof}

We now define a DM stack $\cS$ with two representable morphisms $g_1:\cN \to \cS$ and $g_2:[\Delta/\tau] \to \cS$, such that the two morphisms $\Delta \to [\Delta/\tau] \to \cS$ and
$\Delta \to \cN \to \cS$ are isomorphic. We will denote with roman letters the coarse space of a DM stack, as in the conventions.
We define $S$ to be the pushout in algebraic spaces of the diagram below:
$$\xymatrix{\Delta \ar[d] \ar[r] & N \ar[d]\\\Delta/\tau \ar[r] & S.}$$
Thus $S$ is a disjoint union of points. We determine a stack $\cS$ with coarse space $S$ by requiring that for every point $q \in S$, the group
$\Aut_q(S)$ is $\bmu_2$ if there is a $\bmu_2$ stabilizer on a point of $g_1^{-1}(q) \cup g_2^{-1}(q)$ and trivial otherwise. Recall these are the only options for the stabilizers of $[\Delta/\tau]$ and $\cN$. Now by Lemma \ref{Lemma:unique:representable:arrow:gerbes:over:pt}, the morphisms $g_i$ are uniquely determined.
\begin{Lemma}\label{Lemma:unique:representable:arrow:gerbes:over:pt}
 Let $a,b$ be positive integers with $a$ even. Then, up to isomorphism, there are unique representable
morphisms $\spec(k) \to B\bmu_b$ and $B\bmu_2 \to B\bmu_a$.
\end{Lemma}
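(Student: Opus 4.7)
The plan is to translate both statements into the language of torsors and group homomorphisms, and then reduce to a small amount of elementary cyclic-group arithmetic. The main tool is the standard dictionary: for a finite abelian group scheme $G$ over $k$, a morphism from a scheme $T$ to $BG$ is the same as a $G$-torsor on $T$; and any such morphism is automatically representable, since its base change along any test scheme is a $G$-torsor, hence an algebraic space. More generally, for abelian $G$ and $H$, a morphism $BG\to BH$ is determined up to $2$-isomorphism by a pair $(\phi\colon G\to H,\,P)$ where $\phi$ is a group homomorphism and $P$ is an $H$-torsor on the base.

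For the first assertion, a morphism $\spec(k)\to B\bmu_b$ is a $\bmu_b$-torsor on $\spec(k)$; since $k$ is algebraically closed of characteristic zero, such a torsor is trivial, so there is a unique morphism up to isomorphism. By the general observation above it is automatically representable.

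For the second assertion, I would write a morphism $B\bmu_2\to B\bmu_a$ as a pair $(\phi\colon \bmu_2\to \bmu_a,\,P)$ as above; by the first part, $P$ is necessarily trivial, so the $2$-isomorphism classes of morphisms are in bijection with $\mathrm{Hom}(\bmu_2,\bmu_a)$ (using that $\bmu_a$ is abelian, so conjugation acts trivially on homomorphisms). A morphism is representable precisely when the induced map on inertia at the unique point, namely $\phi$ itself, is injective. Since $a$ is even, $\bmu_a$ contains a unique subgroup of order two, so there is a unique injective homomorphism $\bmu_2\hookrightarrow \bmu_a$. This gives the desired unique representable morphism up to isomorphism.

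The main (mild) obstacle is correctly identifying the $2$-groupoid $\mathrm{Hom}(B\bmu_2, B\bmu_a)$ and checking that $2$-isomorphism collapses it to the set $\mathrm{Hom}(\bmu_2,\bmu_a)$; once this is set up, the representability criterion and the count of injective homomorphisms are immediate.
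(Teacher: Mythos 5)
Your proposal is correct, but it takes a genuinely different route from the paper. You invoke the standard dictionary: morphisms $\spec(k)\to B\bmu_b$ are $\bmu_b$-torsors over $\spec(k)$ (all trivial over an algebraically closed field of characteristic $0$), and $2$-isomorphism classes of morphisms $B\bmu_2\to B\bmu_a$ are classified by $\Hom(\bmu_2,\bmu_a)$ modulo conjugation, which is just $\Hom(\bmu_2,\bmu_a)\cong \bZ/2$ since $\bmu_a$ is abelian and $a$ is even; representability is then the statement that the induced map on stabilizers, i.e.\ the homomorphism itself, is injective, leaving exactly the unique embedding $\bmu_2\hookrightarrow\bmu_a$. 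The paper instead argues by hand: it takes the presentation $\bmu_2\rightrightarrows\spec(k)$ of $B\bmu_2$, describes a morphism to $B\bmu_a$ as a map $\spec(k)\to B\bmu_a$ together with descent data, and explicitly solves the cocycle condition to find exactly two morphisms (the one factoring through $\spec(k)$ and the one induced by $\bmu_2\hookrightarrow\bmu_a$), of which only the latter is representable. The conclusions agree; your approach is shorter and more conceptual but leans on the classification of maps between classifying stacks (which you rightly flag as the point needing verification, and which over $\spec(k)$ is exactly what the paper's cocycle computation establishes), while the paper's argument is self-contained and elementary. Two small points to tidy: the representability of $\spec(k)\to B\bmu_b$ is most cleanly justified by the fact that any morphism from a scheme to an algebraic stack is representable (the diagonal of the stack being representable), equivalently that the base change along $T\to B\bmu_b$ classifying a torsor $P$ is $P$ itself; and your description of $\Hom(B\bmu_2,B\bmu_a)$ as pairs $(\phi,P)$ should be understood over the base $\spec(k)$, where $P$ is forced to be trivial --- over a general base the classification is more involved, but that generality is not needed here.
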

\begin{proof}Since $k$ is algebraically closed, there is a unique $\bmu_b$-torsor over $\spec(k)$, so there is a unique morphism $\spec(k) \to B\bmu_b$.
For the claim about $B\bmu_2$, consider the presentation of $B\bmu_2$ given by $\bmu_2 \rightrightarrows \spec(k)$, where one arrow $\sigma$ is the action of $\bmu_2$ on the point and the other one $\pi_2$ is the structure map. Note however, since $\mu_2$ acts trivially on $\spec(k)$ these are both just the structure map. Then a morphism $B\bmu_2 \to B\bmu_a$ is equivalent to the data of:
\begin{enumerate}
    \item A morphism $f:\spec(k) \to B\bmu_a$, and
    \item An isomorphism $\psi $ in $B\bmu_a(\bmu_2) \cong B\bmu_a(\spec (k)) \times B\bmu_a(\spec (k))$ between $f \circ \sigma $ and $f \circ \pi_2$.
\end{enumerate}
Moreover, $\psi$ must satisfy the cocycle condition. More precisely, let $m:\bmu_2 \times \bmu_2 \to \bmu_2$ be the multiplication and $p_{2,3}$ the projection onto the second factor $\bmu_2 \times \bmu_2 \to \bmu_2$. Then we require that $p_{2,3}^*\psi \circ (\Id_{\bmu_2}\times \sigma)^* \psi= m^* \psi$.

Now, there is a unique morphism $\spec(k) \to B\bmu_a$ from the previous point. An isomorphism $\psi$ as above is the data of two automorphisms $(\alpha,\beta)$ in $B\bmu_a(\spec(k))$.
We will denote by $\{\pm 1\}$ the two points of $\bmu_2$ and by $(1,1),(1,-1),(-1,1),(-1,-1)$ the four points of $\bmu_2 \times \bmu_2$. To fix the notation, $\alpha$ will be the automorphism over $1$ and $\beta$ the one over $-1$. Then the arrows $p_{2,3}$, $\Id_{\bmu_2} \times \sigma$ and $m$ behave as follows:
\begin{itemize}
\item $p_{2,3}(1,1)=p_{2,3}(-1,1)=1$ and $p_{2,3}(1,-1)=p_{2,3}(-1,-1)=-1$;
\item $\Id_{\bmu_2} \times \sigma(1,1)=\Id_{\bmu_2} \times \sigma(1,-1)=1$ and $\Id_{\bmu_2} \times \sigma(-1,1)=\Id_{\bmu_2} \times \sigma(-1,-1)=-1$;
\item $m (1,1)=m (-1,-1)=1$, whereas $m (1,-1)=m(-1,1)=-1$.
\end{itemize}
In particular, the cocycle condition is the following equality of automorphisms over $\mu_2 \times \mu_2$:
$$(\alpha, \beta,\alpha,\beta) \circ (\alpha,\alpha,\beta,\beta)=(\alpha, \beta, \beta,\alpha).$$
Therefore we must have $\alpha \circ \beta = \beta$ which implies $\alpha=\Id$ and that $\beta \circ \beta =\Id$. Thus the only $\psi$ which satisfy the cocycle condition are $(\Id,\Id)$ and $(\Id, \beta)$ where $\beta$ is the unique element of $\bmu_a$ with $\beta^2=\Id$ and $\beta \neq \Id$.
This means that there are exactly two morphisms $B\bmu_2 \to B\bmu_a$. The first one is the composition $B\bmu_2 \to \spec(k) \to B\bmu_a$ and the second one is induced by the morphism $\bmu_2 \hookrightarrow \bmu_a$ as in \cite{Ols16}*{Exercise 10.F}. Only the second is representable.
\end{proof}
\begin{Prop}\label{Prop:we:have:the:map:P:to:AH:Stack} With notation as above, let $\cX'$ be the AH stack associated to $(X' \to \spec(k),\omega_{X'})$. Then there is a unique representable morphism $\cS \to \cX'$ which makes the diagram below commutative.
$$\xymatrix{\Delta \ar[dd] \ar[dr]\ar[rr] & & \cN \ar[dr] \ar[dd]|!{[d]}\hole & &  \\
& \cD^n \ar[dd] \ar[rr] & & \cD \ar[r]  &\cX \ar[dd]    & \\
[\Delta/\tau]\ar[dr]^-f \ar[rr]|!{[r]}\hole & & \cS \ar@{.>}[rrd] & & \\
& \cC \ar[rrr] & &  & \cX' }$$
\end{Prop}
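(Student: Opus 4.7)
The plan is to construct the dashed arrow $\cS \to \cX'$ by first factoring the map $h \colon [\cD^n/\tau] \to \cX'$ of Lemma \ref{Lemma:we:have:the:map:on:D:divided:by:tau:over:the:pt} through $\cC$, and then gluing with the map $\cN \to \cX'$ along their common restriction to $\Delta$. The stack $\cS$ has been defined precisely so that, at the level of coarse spaces, $S$ is the pushout of $[\Delta/\tau] \leftarrow \Delta \to \cN$ and, stack-theoretically, every point of $\cS$ carries the minimal stabilizer compatible with both incoming maps.

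First, I would factor $h$ through $\cC$. By Proposition \ref{Prop:line:bundle:which:gives:rep:morphism:Ddividedbytau:to:BGm} the map $[\cD^n/\tau] \to B\bG_m$ classifying $\cG \otimes \chi_{[\cD^n/\tau]}$ factors through $\cC$, and $\cC \to B\bG_m$ is representable. Since $\omega_{\cX'}$ is a polarizing (hence faithful) line bundle on $\cX'$, the induced map $\cX' \to B\bG_m$ is representable, and by Lemma \ref{Lemma:we:have:the:map:on:D:divided:by:tau:over:the:pt} and Proposition \ref{Prop:line:bundle:which:gives:rep:morphism:Ddividedbytau:to:BGm} the composite $[\cD^n/\tau] \to \cX' \to B\bG_m$ agrees with the map classifying $\cG \otimes \chi_{[\cD^n/\tau]}$. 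The universal property of the relative coarse moduli space then yields a unique factorization $\bar h \colon \cC \to \cX'$ with $h = \bar h \circ \pi$; restricting produces $[\Delta/\tau] \xrightarrow{f} \cC \xrightarrow{\bar h} \cX'$. Independently, $\cN \hookrightarrow \cD \to \cX \to \cX'$ supplies a map $\cN \to \cX'$, and the commutativity of the diagram in Lemma \ref{Lemma:we:have:the:map:on:D:divided:by:tau:over:the:pt} forces the two compositions $\Delta \to [\Delta/\tau] \to \cX'$ and $\Delta \to \cN \to \cX'$ to be 2-isomorphic.

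Next I would build $\cS \to \cX'$ one point at a time, since $S$ is a disjoint union of points. For $q \in S$ with trivial stabilizer in $\cS$, the preimages in $\cN$ and $[\Delta/\tau]$ are scheme points, and the induced map to $\cX'$ is immediate from either side; by the pushout structure and the already established agreement on $\Delta$, both options define the same map $\spec(k) \to \cX'$. For $q$ with $\bmu_2$ stabilizer in $\cS$, by construction there is either a point of $\cN$ or of $[\Delta/\tau]$ over $q$ with $\bmu_2$ stabilizer (the two cases corresponding exactly to the alternatives in Lemma \ref{Lemma:relation:singularities:different} and Corollary \ref{Cor:the:normalization:of:a:stacky:curve:when:we:swap:branches:is:a:scheme}), and the already-constructed representable maps provide $B\bmu_2 \to \cX'$ at that point. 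By Lemma \ref{Lemma:unique:representable:arrow:gerbes:over:pt}, such a representable morphism is unique up to isomorphism once the image point is fixed, which yields simultaneously the existence of $\cS_q \to \cX'$ and the uniqueness of the entire morphism $\cS \to \cX'$ as a representable map.

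The main obstacle I expect is verifying coherence of the 2-isomorphism at a point $q \in S$ with $\bmu_2$ stabilizer lying in the image of both $\cN$ and $[\Delta/\tau]$. One must check that the representable map $B\bmu_2 \to \cX'$ obtained from the $\cN$-side and the one from the $[\Delta/\tau]$-side are naturally 2-isomorphic, in a way compatible with the 2-isomorphism on $\Delta$ established in the first step. This should reduce to comparing the action of the stabilizer $\bmu_2$ on the fiber of $\omega_{\cX'}$ at the image point: both representable maps are completely pinned down by this action because $\cX' \to B\bG_m$ is representable, and the action is controlled by the descent datum for $\cG \otimes \chi_{[\cD^n/\tau]}$ in Proposition \ref{Prop:line:bundle:which:gives:rep:morphism:Ddividedbytau:to:BGm} together with the analogous descent datum coming from adjunction and Proposition \ref{Prop:no:different:on:stack} on the $\cN$-side. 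Once these two actions are identified, uniqueness in Lemma \ref{Lemma:unique:representable:arrow:gerbes:over:pt} finishes both existence and uniqueness of the dashed arrow.
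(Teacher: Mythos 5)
Your argument is correct and follows essentially the same route as the paper: reduce to one point of $\cS$ at a time, use the defining description of the stabilizers of $\cS$, and deduce both existence and uniqueness from Lemma \ref{Lemma:unique:representable:arrow:gerbes:over:pt} applied to representable maps into the residual gerbe of $\cX'$ at the image point. The only difference is that you also spell out the factorization of $h$ through the relative coarse space $\cC$ (via representability of $\cX' \to B\bG_m$ and Proposition \ref{Prop:line:bundle:which:gives:rep:morphism:Ddividedbytau:to:BGm}), a step the paper leaves implicit in its diagram, and you handle the coherence at doubly-hit $\bmu_2$-points by comparing stabilizer actions on fibers of $\omega_{\cX'}$, which is an equivalent formulation of the same uniqueness statement.
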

\begin{proof} It suffices to consider the following subdiagram:
$$\xymatrix{& \cN \ar[d] \ar[dr] &\\
[\Delta/\tau] \ar[r] \ar@/_1pc/[rr]&\cS \ar@{.>}[r] & \cX'.}$$

Up to considering one connected component of $\cS$ at a time, we can assume that $S \cong \spec(k)$.
Now, recall that
the diagram on coarse spaces is a pushout and the diagram of solid arrows commutes. Then
if we replace all the stacks above with their
coarse spaces, we have an arrow $q:S=\spec(k) \to X'$. In particular, if we denote $\bmu_a:=\Aut_{\cX'}(q)$, we have a closed embedding
$B\bmu_a \to \cX'$ (this follows from \cite{Ols16}*{Theorem 11.3.1}). The arrows $[\Delta/\tau] \to \cX'$ and $\cN \to \cX'$
factor through $B\bmu_a \to \cX'$, so in the diagram above we can replace $\cX'$ with $B\bmu_a$. Now the claim follows from the description of $\cS$ and Lemma \ref{Lemma:unique:representable:arrow:gerbes:over:pt}.
\end{proof}

We recall a theorem due to David Rydh on the existence of pinchings in algebraic stacks:

\begin{Teo}[\cite{Rydh}*{Theorem A.4}]\label{Teo:Rydh}
Consider a diagram of solid arrows between algebraic stacks over $B$ as below, with $i$ a closed embedding and $f$ a finite morphism.
$$\xymatrix{\cX \ar@{^{(}->}[r]^i \ar[d]_f& \cY \ar@{.>}[d]^{f'} \\ \cX' \ar@{.>}[r]_{i'} & \cY'}$$
Then there are arrows $i'$ and $f'$ over $S$, such that the resulting diagram is a pushout in algebraic stacks. Moreover, $i'$ is a closed embedding and
$f'$ is integral and an isomorphism away from $\cX$.
If we take the topological spaces of the algebraic stacks above, the corresponding square is a pushout in topological spaces.
Finally, if $\cX$, $\cY$ and $\cX'$ are proper over $B$, then also $\cY'$ is proper over $B$ (recall that for us all the algebraic
stacks are of finite type).
\end{Teo}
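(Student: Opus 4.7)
The plan is to construct $\cY'$ by descending from Ferrand's classical pushout for affine schemes. In the affine case, where $\cX = \spec A$, $\cX' = \spec A'$, $\cY = \spec B$ with $f$ corresponding to a finite ring map $A \to A'$ and $i$ corresponding to a surjection $B \twoheadrightarrow A$, I would set $\cY' := \spec(A' \times_A B)$. A direct verification shows that the projection $A' \times_A B \twoheadrightarrow A'$ is surjective (giving $i'$ as a closed embedding), that $A' \times_A B \to B$ is integral (using that finiteness of $A \to A'$ produces monic polynomial relations for elements of $B$ over $A' \times_A B$), and that inverting $\ker(B \twoheadrightarrow A)$ makes $f'$ into an isomorphism on the complement of $\cX$. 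The universal property in the affine category is immediate from the fiber product, and the assertion about underlying topological spaces is already visible at this level.

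To globalize, I would check that Ferrand's construction commutes with flat (in particular étale) base change on the target, which permits gluing affine pushouts along étale neighborhoods to produce $\cY'$ first as a scheme and then as an algebraic space via étale descent. For the stack case, I would choose a smooth atlas $U \to \cY$, set $V := U \times_{\cY} \cX$, and select a smooth atlas $U' \to \cX'$ together with an étale refinement through which the finite (hence affine) map $V \to \cX'$ factors. Applying the algebraic-space pushout construction to $U' \leftarrow V \to U$ yields a candidate smooth atlas of $\cY'$, and the same construction applied to the corresponding groupoids produces a smooth groupoid presentation of $\cY'$. The closed embedding, integrality, and isomorphism-away-from-$\cX$ properties then descend immediately from the affine statements.

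The main technical obstacle is verifying that the pushout of groupoids remains a smooth groupoid and that the resulting stack is independent of the chosen atlases; this uses crucially that $f$ is affine (so that after refinement $V \to U'$ behaves well étale-locally) and that $i$ is a closed embedding. For properness over $B$ under the hypothesis that $\cX$, $\cY$, and $\cX'$ are proper, I would proceed via the valuative criterion: $\cY'$ is of finite type by construction, and any map $\spec K \to \cY'$ from the fraction field of a DVR $R$ factors topologically through either $f'(\cY)$ or $i'(\cX')$, hence can be lifted to $\cY$ or $\cX'$ by their properness over $B$ and then pushed forward to produce an extension $\spec R \to \cY'$. Uniqueness of such extensions reduces to separatedness of $\cY$ and $\cX'$, while universal closedness follows from the fact that $\cY \sqcup \cX' \to \cY'$ is a surjective proper map from a stack proper over $B$.
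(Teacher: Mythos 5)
The paper does not prove this statement at all: it is quoted as Theorem A.4 of \cite{Rydh} (the authors explicitly thank Rydh for the statement and its proof), and the only in-paper commentary is Remark \ref{Remark:the:construction:of:a:pushout:is:etale:local}, which records that the construction of $\cY'$ is smooth-local. So there is no in-paper argument to compare yours against; your proposal has to stand on its own, and your affine step (Ferrand's fiber-product ring $A' \times_A B$, surjectivity of the projection to $A'$, integrality of the map to $B$) together with étale descent to algebraic spaces is indeed the standard route for the space-level statement.

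However, the passage to stacks, which is the actual content of Rydh's theorem, has a genuine gap. First, a concrete error: with $V := U \times_{\cY} \cX$ for a smooth atlas $U \to \cY$, the map $V \to \cX'$ is \emph{not} finite — $V \to \cX$ is a smooth cover with positive-dimensional fibers and only $\cX \to \cX'$ is finite — so the proposed factorization through an étale refinement of an atlas of $\cX'$ does not get off the ground. More fundamentally, to present the pushout by a groupoid you need a \emph{single} smooth atlas $V$ of $\cX$ realized simultaneously as $U \times_{\cY} \cX$ and $U' \times_{\cX'} \cX$; the natural order is the reverse of yours (choose $U' \to \cX'$, pull back along the finite morphism to get $V \to \cX$, then lift $V$ to a smooth chart of $\cY$ restricting to $V$ along the closed immersion $\cX \hookrightarrow \cY$), and this lifting of smooth charts along a non-nilpotent closed immersion, together with the verification that the resulting pushout groupoid is smooth and the stack independent of choices, is precisely the hard part — you flag it as ``the main technical obstacle'' but do not resolve it. Smaller issues: the ring map attached to $f$ is $A' \to A$ (finite), not $A \to A'$; producing $\cY'$ ``first as a scheme'' is unjustified (Ferrand's scheme-level pushout needs affineness hypotheses), though harmless since you only need algebraic spaces; and in the properness argument, separatedness of $\cY'$ does not follow formally from the existence of a proper surjection $\cY \sqcup \cX' \to \cY'$ with proper source and needs a separate argument.
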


\begin{Remark}\label{Remark:the:construction:of:a:pushout:is:etale:local}The construction of $\cY'$ can be performed smooth locally.
In particular, if we take $\spec(A') \to \cY'$ a smooth morphism,
and we pullback $i'$, $f'$, $i$ and $f$ through it, the corresponding diagram will be a pushout in schemes.\end{Remark}

The following Lemma is the last technical result we need before proving Theorem \ref{Teo:gluing:morphisms}.
\begin{Lemma}\label{Lemma:the:normalizatio-conductor:diagram:with:D:is:a:pushout}
 The following diagram is a pushout in algebraic stacks:
 $$\xymatrix{\Delta \ar[r] \ar[d]&\cN\ar[d] \\ \cD^n \ar[r] & \cD}$$
\end{Lemma}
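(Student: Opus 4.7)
The plan is to apply Rydh's pushout theorem (Theorem \ref{Teo:Rydh}) to construct the pushout stack and then show that the resulting canonical morphism to $\cD$ is an isomorphism by an \'etale-local analysis at the nodes of $\cD$.

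First I would verify the hypotheses of Theorem \ref{Teo:Rydh}: the map $\Delta \hookrightarrow \cD^n$ is a closed immersion, being the preimage $\nu^{-1}(\cN)$ of the closed substack $\cN \subseteq \cD$ under the normalization $\nu : \cD^n \to \cD$; and $\Delta \to \cN$ is finite as the restriction of the finite morphism $\nu$. Theorem \ref{Teo:Rydh} then produces an algebraic stack $\cP := \cD^n \sqcup_\Delta \cN$ with a closed immersion $\cN \hookrightarrow \cP$ and a finite morphism $\cD^n \to \cP$ that is an isomorphism away from $\cN$. The commutative square in the statement of the lemma, combined with the universal property of the pushout, yields a canonical morphism $\phi : \cP \to \cD$ compatible with the structure maps from $\cN$ and $\cD^n$.

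It then suffices to show $\phi$ is an isomorphism. Away from $\cN$, both $\cP$ and $\cD$ are naturally identified with $\cD^n \smallsetminus \Delta$, so $\phi$ is already an isomorphism there. Near a node $p$ of $\cD$, Lemmas \ref{Lemma:AH:stack:iso:along:nodes} and \ref{Lemma:relation:singularities:different} give two possibilities. If $p$ lies over a node of $D$, then $\cD$ is \'etale-locally a scheme isomorphic to $\spec(k[x,y]/(xy))$, and the claim reduces to the classical ring-theoretic pullback identity
\[\{(f,g) \in k[x] \times k[y] : f(0) = g(0)\} \cong k[x,y]/(xy).\]
If instead $p$ lies over a smooth point of $D$, then \'etale-locally $\cD \cong [\spec(k[x,y]/(xy))/\bmu_2]$ with $\bmu_2$ swapping the branches; in this case I would reduce the pushout claim to the scheme-theoretic case by descent along the smooth cover $\spec(k[x,y]/(xy)) \to \cD$, invoking Remark \ref{Remark:the:construction:of:a:pushout:is:etale:local} to see that the Rydh pushout construction commutes with this smooth base change.

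The main obstacle is the second case, where $\cN$ itself is a nontrivial gerbe (isomorphic to $B\bmu_2$) and $\cD^n$ becomes a scheme only after taking the quotient; the $\bmu_2$-action on the branches must be tracked carefully when descending the scheme-level pushout identity to the quotient stack. Once the local isomorphism is verified in both cases, $\phi$ is an isomorphism globally, completing the proof of the lemma.
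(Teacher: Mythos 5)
Your proposal is correct and follows essentially the same route as the paper: construct the pushout via Rydh's Theorem \ref{Teo:Rydh}, obtain the canonical map to $\cD$ from the universal property, and use the \'etale-local nature of the pinching (Remark \ref{Remark:the:construction:of:a:pushout:is:etale:local}) to reduce to the scheme-level conductor-square statement. The only difference is cosmetic: the paper cites a reference for the scheme case, whereas you verify it directly through the local models at the two kinds of nodes coming from Lemmas \ref{Lemma:AH:stack:iso:along:nodes} and \ref{Lemma:relation:singularities:different}.
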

\begin{proof} From \cite{Rydh}*{Theorem A.4}, the pushout $\cD'$ of the diagram without $\cD$ exists; we need to show that $\cD' \cong \cD$. Since $\cD'$ can be constructed étale locally, we can assume that $\cD$, $\cD^n$, $\cN$ and $\Delta$ are schemes. Then the conclusion follows from \cite{ks}.
\end{proof}
Now we aim at constructing $\cX'$ from the data of $\cD$, $\tau$ and $\cX$. The situation is sumarized in the following diagram, where the dashed arrows are pushouts, the wavy ones come from the universal property of a pushout, and all the arrows are representable. From the definition of $\cS$, the morphism
$[\Delta/\tau] \to \cS$ is finite. Therefore from \cite{Rydh}*{Theorem A.4} the two maps $[\Delta/\tau] \to \cS$ and $[\Delta/\tau] \to \cC$ have a
pushout $\cC \to \cQ$ and $\cS \to \cQ$. Then we have a morphism $\alpha: \cQ \to \cX'$. To check that $\alpha$ is representable, it suffices to
check that it is injective on automorphisms of closed points. But from the description of $\cQ$ given in \cite{Rydh}*{Theorem A.4}, it suffices that $\cC \to \cX'$ and $\cS \to \cX'$ are representable, which is true.

Then by Lemma \ref{Lemma:the:normalizatio-conductor:diagram:with:D:is:a:pushout}, we have a morphism $\cD \to \cQ$ which
is representable by \cite{Rydh}*{Theorem A.4}. One can check that it is also quasifinite, and $\cD$ is proper, so $\cD \to \cQ$ is finite.
Finally, again from \cite{Rydh}*{Theorem A.4}, the two maps $\cD \to \cX$ and $\cD \to \cQ$ admit a pushout $\cZ$.
As before, we have a representable morphism $\Phi:\cZ \to \cX'$.
\begin{equation}\label{eqn:big:diagram}
\xymatrix{\Delta \ar[dd] \ar[dr]\ar[rr] & & \cN \ar[dr] \ar[dd]|!{[d]}\hole & & & \\
& \cD^n \ar[dd] \ar[rr] & & \cD \ar[r] \ar@{~>}[dd]  &\cX \ar@{-->}[dd] \ar[dr]   & \\
[\Delta/\tau] \ar[dr]^-f \ar[rr]|!{[r]}\hole & & \cS \ar[rrr]|!{[r]}\hole|!{[rr]}\hole \ar@{-->}[dr] & & & \cX'\\
& \cC \ar@{-->}[rr]& & \cQ \ar@{-->}[r] & \cZ.\ar@{~>}[ur]_-\Phi  & }
\end{equation}

\begin{Prop}\label{Prop:pushout:gives:the:AH:stack:over:speck} The morphism $\Phi$ is an isomorphism. In particular, $\cZ$ is Gorenstein.
\end{Prop}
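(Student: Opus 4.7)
The plan is to show $\Phi$ is representable (already established), induces an isomorphism on coarse spaces, and induces an isomorphism on stabilizer groups at every closed point; a representable morphism with these properties is automatically an isomorphism. The Gorenstein claim then follows immediately, since $\cX'$ is by construction the AH stack of $(X',\omega_{X'})$, so $\omega_{\cX'}$ is a line bundle and hence so is $\omega_{\cZ}$.

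For the coarse spaces, I would apply Theorem \ref{Teo:Rydh} iteratively. Passing to coarse spaces commutes with the pushouts defining $\cQ$ and $\cZ$ by Theorem \ref{Teo:Rydh}: $Q$ is the pushout in algebraic spaces of $\Delta/\tau \to C$ and $\Delta/\tau \to S$, and then $Z$ is the pushout of $D \to X$ and $D \to Q$. Here $C$ is the coarse space of $[\cD^n/\tau]$, i.e.\ $D^n/\tau$, and $S$ is the pushout of $\Delta \to N$ and $\Delta \to \Delta/\tau$ by construction. Unwinding, this is exactly the sequence of algebraic-space pushouts through which the stable surface $X'$ is assembled from $(X,D,\tau)$ in \cite{Kollarsingmmp}*{Theorem 5.13}, so $Z \cong X'$ and $\Phi$ is an isomorphism on coarse spaces.

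For the stabilizers, I would work étale locally on $\cX'$, using Remark \ref{Remark:the:construction:of:a:pushout:is:etale:local} to compute $\cZ$ as an étale-local pushout of quotient stacks. Away from the image of the conductor $\cD$, both $\cZ$ and $\cX'$ agree with $\cX$, so there is nothing to check. The relevant points of $\cD$ split into two cases: (i) nodes of $\cD$, which lie over smooth points of $D$ where the different is an integral boundary, and (ii) smooth points of $\cD$ with a nontrivial cyclic stabilizer (different coefficient $1-1/|G|$). In case (i), Lemma \ref{Lemma:AH:stack:iso:along:nodes} shows $\cX \cong \cX'$ locally, matching the stabilizers on the nose. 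In case (ii), Lemma \ref{Lemma:relation:singularities:different}, Proposition \ref{Prop:no:different:on:stack}, and Proposition \ref{Prop:line:bundle:which:gives:rep:morphism:Ddividedbytau:to:BGm} together identify the stabilizer inherited by $\cZ$ through the pushout with the index of $\omega_{X'}$, which by definition is the AH stabilizer.

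The main obstacle I expect is this final verification at the fixed points of $\tau$ on $\cD^n$ lying over $\Delta$. There the line bundle descending to $[\cD^n/\tau]$ gets twisted by $\chi_{[\cD^n/\tau]}$ (Proposition \ref{Prop:line:bundle:which:gives:rep:morphism:Ddividedbytau:to:BGm}), and Lemma \ref{Lemma:unique:representable:arrow:gerbes:over:pt} is needed to rule out spurious maps and to ensure the resulting stabilizer is precisely the AH stabilizer of $X'$, rather than some larger cyclic group. Once this local matching of stabilizers is in hand, representability of $\Phi$ forces the injection of stabilizers to be an isomorphism, and the representability together with the coarse-space identification upgrades $\Phi$ to an isomorphism of stacks.
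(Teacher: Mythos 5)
Your overall architecture (representable, iso on coarse spaces, iso on stabilizers $\Rightarrow$ iso) has a genuine gap at the final step: that criterion is false as stated. For a counterexample, let $\bmu_2$ act on $\spec(k[t])$ and on $\spec(k[t^2,t^3])$ by $t\mapsto -t$; the induced morphism $[\spec(k[t])/\bmu_2]\to[\spec(k[t^2,t^3])/\bmu_2]$ is representable, finite, bijective, induces the identity on the common coarse space $\spec(k[t^2])$ and an isomorphism on every stabilizer group, yet it is not an isomorphism (pulling back along the atlas $\spec(k[t^2,t^3])$ recovers the normalization of the cusp). Some hypothesis ruling out this phenomenon is needed, and this is exactly how the paper closes the argument: it observes that $\cZ$ is \emph{seminormal} because it is a pushout of seminormal stacks, that $\cX'$ is seminormal, and that $\Phi$ is proper; a proper morphism of seminormal DM stacks inducing an equivalence on $\spec(k)$-points is an isomorphism. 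Without seminormality (or some equivalent control on $\cO_{\cX'}\to\Phi_*\cO_\cZ$) your conclusion does not follow.

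The stabilizer comparison is also asserted rather than proved, and your proposed case analysis is aimed at the wrong object. The points where $\Phi$ could fail to be an isomorphism are the finitely many glued points coming from $\cS$ and $\cQ$, and the stabilizer of $\cZ$ there is produced by the pushouts, not read off from the local structure of $\cD$ alone; note that $\Aut_{\cX'}(p)$ can be $\bmu_a$ with $a>2$, while $\cS$ only carries $\bmu_2$ or trivial stabilizers, so Lemmas \ref{Lemma:AH:stack:iso:along:nodes}, \ref{Lemma:relation:singularities:different} and Proposition \ref{Prop:line:bundle:which:gives:rep:morphism:Ddividedbytau:to:BGm} do not by themselves identify $\Aut_{\cZ}(p)$ with $\bmu_a$ — that identification is precisely the content to be established. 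The paper sidesteps any explicit computation of $\Aut_\cZ(p)$ with a divisibility argument: setting $m=|\Aut_{\cX'}(p)|$ (the index of $K_{X'}$ at $p$) and $n=|\Aut_{\cZ}(p)|$, representability of $\Phi$ gives $n\mid m$, while $\Phi^*(\omega_{\cX'})^{\otimes n}$ descends to a line bundle near $p\in X'$ agreeing with $\omega_{X'}^{[n]}$ in codimension one, forcing $m\mid n$; hence $m=n$. I would recommend adopting both the divisibility argument and the seminormality step; your coarse-space identification via commuting pushouts with coarse spaces and \cite{Kollarsingmmp}*{Theorem 5.13} matches the paper and is fine (though the commutation is a consequence of universal properties rather than of Theorem \ref{Teo:Rydh} itself), and the Gorenstein conclusion is immediate once $\Phi$ is an isomorphism, as you say.
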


\begin{proof} From the univesal property of the pushouts and coarse spaces, one can check that taking the coarse space commutes with pushouts. In particular, by \cite{Kollarsingmmp}*{Chapter 5}, the morphism $\Phi$ is an isomorphism on coarse spaces.
Furthermore, $\cZ$ is seminormal since it is the pushout of seminormal stacks. 

Observe that $\Phi$ is an isomorphism away from a finite set of points $\{q_1,...,q_r\}$. Let $p:=q_i$ and let $m:=|\Aut_{\cX'}(p)|$ be the index of $K_{X'}$ at $p$. If $n:=|\Aut_{\cZ}(p)|$, the line bundle $\Phi^*(\omega_{\cX'})^{\otimes n}$ descends to a line bundle in a neighborhood of $p \in X'$ and agrees with $\omega_{X'}^{[n]}$ in codimension one so the index $m$ divides $n$. On the other hand, by representability, $n$ divides $m$. Thus $m = n$ and the morphism $\Phi_p: \Aut_{\cZ}(p) \to \Aut_{\cX'}(p)$ is bijective. We conclude that $\Phi$ is a proper morphism of seminormal DM stacks such that $\cZ(\spec(k)) \to \cX'(\spec(k))$ is an equivalence so $\Phi$ is an isomorphism.
\end{proof}

\begin{EG}
Consider $(X,D)=(\mathbb{A}^2_{x,y},V(xy))$, so $D$ is a nodal curve and consider the involution $\tau:D \to D$ which sends $(x,y) \mapsto (-x,-y)$. Call the resulting slc surface $X'$.

Observe that $(\omega_X(D))\big|_{D}=\omega_D$ is generated by the section $\frac{dx \wedge dy}{xy}$ which is fixed by $\tau$. Thus the action on $\omega_D \otimes \chi$ is non-trivial and $\cC \cong [D^n/\tau]$. Then $\cN = p$ is a node, $\Delta = q_1 \sqcup q_2$ lying above the node which are both fixed points of $\tau$, and $\cS = [p/\bmu_2] = B\bmu_2$. It follows that $\cQ = [D/\tau]$ and the AH stack $\cX'$ of $X'$ is the pushout in stacks of $D \hookrightarrow \mathbb{A}^2$ and $D \to [D/\tau]$. In particular it has a $\bmu_2$ stabilizer at the origin. Therefore $\omega_{X'}$ has index $2$ at the origin. 

Using a computer algebra system, one can compute equations for $X'$ directly from the presentation above. They are given by the following ideal in $k[a,b,c,d,e]$:
$$
(cd - ae, bd - ce, ab - c^2, de - c^3, bc^2 - e^2, ac^2 - d^2).
$$
\end{EG}

We are now ready to prove the main theorem of this subsection, which states that the construction of
$\cZ$ above can be carried out in families over an arbitrary base.

\begin{Teo}\label{Teo:gluing:morphisms}Consider a twisted stable family of normal surface pairs $(f:\sX\to B,\phi: \omega_f \to \sL)$.
Assume that $\sD_\phi \to B$ has a simultaneous normalization,
namely that there is a flat proper morphism $\sD^n \to B$ and a morphism $\nu:\sD^n \to \sD_\phi$ such
that for every $b \in B$, the map $\nu_b:\sD^n_b \to (\sD_\phi)_b$ is a normalization. Let $\{\sigma_i:B \to \sD^n\}$ be
disjoint sections which surject onto the locus where
$\sD^n \to \sD_\phi$ is not an isomorphism.
Finally let $\tau:\sD^n \to \sD^n$ be an involution over $B$ which preserves the closed substack
$\bigsqcup \sigma_i(B)$ and which is fiberwise generically fixed point free. 

Then there exists a DM stack $\sZ$ which fits in the commutative diagram below.
$$ \xymatrix{\sD^n \ar[r] \ar[d] & \sX \ar[d] \\ [\sD^n/\tau] \ar[r] & \sZ}$$
Moreover, $\sZ \to B$ is flat and proper, and for every $p\in B$,
the fiber $\sZ_p$ is the AH stack of the stable surface obtained from gluing data $D_p^n$, $\tau\big|_{D_p^n}$ and $X_p$ by \cite{Kollarsingmmp}*{Theorem 5.13}. 
\end{Teo}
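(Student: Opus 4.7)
The plan is to globalize, relatively over $B$, the pushout construction carried out fiberwise in Proposition \ref{Prop:pushout:gives:the:AH:stack:over:speck}, using Rydh's Theorem \ref{Teo:Rydh} to form the pushouts and Remark \ref{Remark:the:construction:of:a:pushout:is:etale:local} to verify base-change compatibility. Namely, I will construct relative analogues $\sC$, $\sS$, $\sQ$, $\sZ$ of the stacks $\cC$, $\cS$, $\cQ$, $\cZ$ appearing in diagram \eqref{eqn:big:diagram}, in the same order, and then check that formation of $\sZ$ commutes with base change to each fiber $b \in B$.

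To build these pieces, set $\Delta := \bigsqcup_i \sigma_i(B) \subset \sD^n$, which is a closed substack preserved by $\tau$. The action of $\tau$ on the reflexive sheaf $\omega_{\sD^n/B}(\Delta)$ makes it descend to a line bundle $\sG$ on $[\sD^n/\tau]$, and arguing as in Proposition \ref{Prop:line:bundle:which:gives:rep:morphism:Ddividedbytau:to:BGm}, the twist $\sG \otimes \chi_{[\sD^n/\tau]}$ defines a morphism $[\sD^n/\tau] \to B\bG_m$ along which $[\Delta/\tau]$ is representable (by the family version of Lemma \ref{Lemma:right:stacky:structure:on:quotient:of:delta:to:have:closed:embedding}, using that $\tau$ is fiberwise generically fixed-point-free). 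Let $\sC \to B$ be the relative coarse moduli space of this morphism. Denoting by $\sN \subset \sD_\phi$ the image $\nu(\Delta)$, the relative analogue of $\cS$ is built exactly as in the absolute case, producing $\sS \to B$ together with representable finite maps $\sN \to \sS$ and $[\Delta/\tau] \to \sS$; Lemma \ref{Lemma:unique:representable:arrow:gerbes:over:pt} ensures uniqueness of these maps component by component. Applying Theorem \ref{Teo:Rydh} over $B$ to the closed embedding $[\Delta/\tau] \hookrightarrow \sC$ and the finite map $[\Delta/\tau] \to \sS$ produces $\sQ \to B$. Proceeding in families as in Lemma \ref{Lemma:the:normalizatio-conductor:diagram:with:D:is:a:pushout} yields a finite representable map $\sD_\phi \to \sQ$, and a second application of Theorem \ref{Teo:Rydh} to $\sD_\phi \hookrightarrow \sX$ and $\sD_\phi \to \sQ$ produces the desired $\sZ \to B$. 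Rydh's theorem guarantees each pushout is proper, so $\sZ \to B$ is proper.

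The main technical obstacle will be showing that this construction commutes with base change along every $b \hookrightarrow B$, as this is what identifies $\sZ_b$ with the output of \cite{Kollarsingmmp}*{Theorem 5.13} via Proposition \ref{Prop:pushout:gives:the:AH:stack:over:speck} and what will ultimately yield the flatness of $\sZ \to B$. My approach is to reduce to a statement about schemes by taking a smooth cover and appealing to Remark \ref{Remark:the:construction:of:a:pushout:is:etale:local}, then to use that the relevant affine pushouts involve maps that are flat over $B$: $\sX \to B$ is flat by hypothesis, $\sD_\phi \to B$ is flat by Corollary \ref{Cor:D:is:a:FLAT:family}, $\sD^n \to B$ is flat by assumption, and the $\sigma_i$ are sections of a smooth morphism, so the same flatness over $B$ passes to $\Delta$, $\sN$, $[\sD^n/\tau]$, $[\Delta/\tau]$, $\sC$ and $\sS$. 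Once base-change compatibility is established, Proposition \ref{Prop:pushout:gives:the:AH:stack:over:speck} identifies $\sZ_b$ with the AH stack of the stable surface glued from $(X_b, D_b, \tau|_{\sD^n_b})$, and flatness of $\sZ \to B$ follows from the fibral flatness criterion \cite{EGAIV}*{Th\'eor\`eme 11.3.10} applied to an \'etale atlas, the fibers all being proper, two-dimensional, reduced, and of constant Hilbert polynomial.
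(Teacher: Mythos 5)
Your relative construction of $\sC$, $\sS$, $\sQ$, $\sZ$ via Theorem \ref{Teo:Rydh} and your \'etale-local reduction of the base-change question follow the paper's argument essentially step by step; the genuine gap is in the final flatness step. You propose to get flatness of $\sZ \to B$ from the fibral flatness criterion of EGA IV, 11.3.10 together with constancy of the Hilbert polynomials of the fibers. This does not work over the bases the theorem must cover: $B$ is an arbitrary scheme, possibly non-reduced (indeed Artinian bases are the whole point of having a functorial gluing morphism of stacks), and constancy of Hilbert polynomials detects flatness only over a reduced base. The fibral criterion also has no admissible input: it needs a morphism $V \to \sZ$ over $B$ with $V$ flat over $B$ and $V_b \to \sZ_b$ flat for all $b$, but the available maps $\sX \to \sZ$, $\sD_\phi \to \sZ$, $\sQ \to \sZ$ are pinchings, closed immersions or normalization-type integral maps, none of which is flat on fibers, and taking a smooth atlas of $\sZ$ is circular, since flatness of that atlas over $B$ is exactly what is to be proved.

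The repair is precisely Lemma \ref{Lemma:pushout:commutes:with:bc}, whose hypotheses you have already assembled but which you only exploited for base change. \'Etale-locally (Remark \ref{Remark:the:construction:of:a:pushout:is:etale:local}) each pinching is $\spec$ of the fiber product $A = A' \times_{B'} B$, and the exact sequence $0 \to A \to B \times A' \to B' \to 0$ gives, via the long exact sequence of $\operatorname{Tor}$, that $A$ is flat over the base ring as soon as $A'$, $B$ and $B'$ are; the same sequence, using only flatness of $B'$ (the pinched locus, i.e.\ $[\Delta/\tau]$ resp.\ $\sD_\phi$), gives compatibility with arbitrary base change, not merely restriction to points. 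Applying this first to the pushout defining $\sQ$ (with $\sC$, $\sS$, $[\Delta/\tau]$ flat over $B$) and then to the one defining $\sZ$ (with $\sX$, $\sQ$, $\sD_\phi$ flat over $B$, the last by Corollary \ref{Cor:D:is:a:FLAT:family}) yields flatness of $\sZ \to B$ directly and lets Proposition \ref{Prop:pushout:gives:the:AH:stack:over:speck} identify the fibers as you intend. One smaller point: your claim that flatness ``passes to $\sN$'' (and to $\sS$) needs an argument, since $\sN$ is the image of $\Delta$, not itself a union of disjoint sections a priori; the paper handles this by showing $\Delta$, $\sN$, $[\Delta/\tau]$ and hence $\sS$ are constant families over a connected base, and you should include that step.
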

\begin{proof}Throughout the proof, we will drop the subscript $\phi$ on $\cD_\phi$. We will denote by $\Delta$ the closed substack, which is actually a scheme, given by the sections
$\sigma_i$. Furthermore, we can assume that $S$ is connected.

From Proposition \ref{Prop:no:different:on:stack} we have that $\sL\big|_{\sD^n}\cong \omega_{\sD^n/B}(\Delta)$. It descends to a line bundle
$\sG$ on $[\sD^n/\tau]$, and we let $[\sD^n/\tau] \to \sC$ be the relative coarse moduli space of the map $[\sD^n/\tau] \to B\bG_{m,B}$ induced by
$\sG \otimes \chi_{[\sD^n/\tau]}$. The induced morphism $\sC \to B$ is flat and the construction of $\sC$ commutes with arbitrary base change $B' \to B$ by \cite{AOV}*{Proposition 3.4}. 

Consider the quotient stack $[\Delta/\tau]$. Now, $\Delta \subseteq \sD^n$ is a disjoint union of sections of $\sD^n \to B$, so $\Delta \cong \bigsqcup B$. Since $\tau$ acts fiberwise and preserves $\Delta$, and since $B$ is connected, we have
that $[\Delta/\tau] \cong [\Delta_b/\tau]
\times B$ for any $b \in B$. In other words, $[\Delta/\tau]$ is a constant family over $B$.
Similarly, if we denote by $\sN$ the singular locus of $\sD \to B$, then $\sN \cong \sN_b \times B$ and $\Delta\cong \Delta_b \times B$.
For any fixed $b$, considering $\Delta_b, [\Delta_b/\tau ]$ and $\sN_b$, we can construct $\cS_b$ as in Proposition \ref{Prop:we:have:the:map:P:to:AH:Stack}. Now we can define $\cS:=\cS_b \times B$. Then $\cS$ fits into a commutative diagram as below and by construction its formation commutes with base change.
$$\xymatrix{\Delta \ar[r] \ar[d] & \sN \ar[d] \\ [\Delta/\tau] \ar[r]& \sS}$$

Now we proceed by constructing the analogous pushouts from Diagram \ref{eqn:big:diagram} and the discussion before it. Along the way, we will need to check that our pushouts, which are pinchings as in Theorem \ref{Teo:Rydh}, commute with arbitrary basechange $B' \to B$. This will be checked étale locally (see Remark \ref{Remark:the:construction:of:a:pushout:is:etale:local}) using Lemma \ref{Lemma:pushout:commutes:with:bc}.

First we have the following diagram which, as in Lemma \ref{Lemma:the:normalizatio-conductor:diagram:with:D:is:a:pushout}, we claim that is a pushout:
$$\xymatrix{\Delta \ar[d] \ar[r] & \sN \ar[d] \\ \sD^n \ar[r] & \sD.}$$
Indeed, $\Delta, \sN, \sD^n$ and $\sD$ are flat over $B$, so we can check that the diagram above is a pushout after pulling back along $\spec(k)
\to B$, but this is the content of Lemma \ref{Lemma:the:normalizatio-conductor:diagram:with:D:is:a:pushout}.

Next we construct the analagous pushouts to the dashed ones in Diagram \ref{eqn:big:diagram}. This produces the desired proper morphism $\sZ \to B$ which is flat and commutes with base change by the above discussion. So $\sZ$ satisfies the claimed properties.
\end{proof}
In the proof of Theorem \ref{Teo:gluing:morphisms}, we needed the following technical result to check that the gluing construction commutes with base change and produces a flat family. 
\begin{Lemma}\label{Lemma:pushout:commutes:with:bc} Let $R$ be a ring, and consider two homomorphisms of
$R$-algebras $f:A' \to B'$ and $g:B \to B'$ with $A := A' \times_{B'} B$ their fiber product.
Assume that $B'$ is flat over $R$ and that $(g,-f):B\times A' \to B'$ is surjective. Then the square
$$
\xymatrix{A' \otimes_R S \ar[r] & B' \otimes_R S \\ A \otimes_R S \ar[r] \ar[u] & B \otimes_R S \ar[u]}
$$
is cartesian for any ring homomorphism $R \to S$. Moreover, if $A'$ and $B$ are flat over $R$, so is $A$.\end{Lemma}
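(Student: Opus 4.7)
My plan is to realize the fiber product $A = A' \times_{B'} B$ as the kernel of the $R$-linear map
$$h \colon A' \times B \longrightarrow B', \qquad (a', b) \longmapsto f(a') - g(b).$$
The surjectivity hypothesis on $(g, -f)$ is precisely the statement that $h$ is surjective. Hence we have a short exact sequence of $R$-modules
$$0 \longrightarrow A \longrightarrow A' \times B \longrightarrow B' \longrightarrow 0. \qquad (\ast)$$

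For the first claim, since $B'$ is flat over $R$, tensoring $(\ast)$ with any $R$-algebra $S$ over $R$ preserves exactness, yielding
$$0 \longrightarrow A \otimes_R S \longrightarrow (A' \otimes_R S) \times (B \otimes_R S) \longrightarrow B' \otimes_R S \longrightarrow 0,$$
after identifying $(A' \times B) \otimes_R S$ with $(A' \otimes_R S) \times (B \otimes_R S)$. This exhibits $A \otimes_R S$ as the kernel of the difference map and thus as the fiber product of $A' \otimes_R S$ and $B \otimes_R S$ over $B' \otimes_R S$, which is exactly the cartesian square in the statement.

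For the flatness claim, assume in addition that $A'$ and $B$ are flat over $R$; then $A' \times B$ is flat over $R$. Apply the long exact sequence of $\operatorname{Tor}_R^\bullet(-, M)$ to $(\ast)$ for an arbitrary $R$-module $M$. The relevant piece is
$$\operatorname{Tor}_2^R(B', M) \longrightarrow \operatorname{Tor}_1^R(A, M) \longrightarrow \operatorname{Tor}_1^R(A' \times B, M).$$
The outer terms vanish by flatness of $B'$ and of $A' \times B$ respectively, so $\operatorname{Tor}_1^R(A, M) = 0$ for all $M$, i.e. $A$ is flat over $R$.

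The argument is essentially formal once one sets up $(\ast)$; the only genuine point is recognizing that the surjectivity hypothesis is exactly what promotes the defining left-exact sequence of the fiber product into a short exact sequence, so that flatness of $B'$ can be leveraged. No step should present a serious obstacle.
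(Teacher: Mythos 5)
Your proof is correct and is essentially the same as the paper's: both realize $A$ as the kernel of the surjection $(g,-f)\colon B\times A'\to B'$, use flatness of $B'$ to keep the resulting short exact sequence exact after tensoring with $S$, and then deduce flatness of $A$ from the long exact sequence of $\operatorname{Tor}$. No issues.
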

\begin{proof} First note that the following sequence is exact:
$$ 0 \to A \to B \times A' \xrightarrow{(g,-f)} B' \to 0.$$
Since $B'$ is flat over $R$, we also have that 
$$ 0 \to A\otimes_R S \to (B \times A')\otimes_R S \to B'\otimes_R S \to 0$$
is exact. On the other hand, $(B \times A')\otimes_R S \cong (B \otimes_R S) \times (A'\otimes_R S)$, so $A \otimes_R S$ is the pullback of $B \otimes_R S \to B' \otimes_R S$ and $A' \otimes_R S \to B' \otimes_R S$, proving the first claim. The second claim follows by applying the long exact sequence of $\operatorname{Tor}_i$ to the first exact sequence in the proof above.
\end{proof}
\subsection{Gluing data}\label{subsec:gluing:data}
In this subsection we package the information of a gluing data into an algebraic stack $\cG_{2,v}$. Therefore, Theorem \ref{Teo:gluing:morphisms}
produces a gluing morphism $\cG_{2,v} \to \cK_{2,n}^\omega$ which on the level of points agrees with the gluing morphism
of \cite{Kollarsingmmp}*{Theorem 5.13}.
\begin{Prop}\label{Prop:we:have:the:stack:curly:G}
There is an algebraic stack $\cG_{2,v}$ which parametrizes the following objects. Over a scheme $B$, the objects of $\cG_{2,v}(B)$ are quadruples $$((f:\cX \to B, \phi:\omega_f \to \cL); g:\cD^n \to \cD_\phi; \{\sigma_i:B \to \cD^n\}; \tau:\cD^n \to \cD^n) $$
where:
\begin{enumerate}
    \item $(f:\cX \to B, \phi:\omega_f \to \cL)$ is an object of $\cK_{2,v}(B)$;
    \item  $\cD^n \to B$ is a flat family of orbifold smooth curves and $g$ is a simultaneous normalization;
    \item For a certain $n$, we have $n$ disjoint sections $\sigma_i$ of $\cD^n \to B$, such that
    $g \circ \sigma_i (b) \in (\cD^{\text{sing}}_\phi)_b$ and such that $g$ is an isomorphism away from $\bigsqcup \sigma_i(B)$, and
    \item $\tau$ is a generically fixed point free involution which preserves $\bigsqcup \sigma_i(B)$.
\end{enumerate}
The morphisms are pullback diagrams which satisfy the obvious commutativity conditions.
\end{Prop}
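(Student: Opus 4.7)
The plan is to realize $\cG_{2,v}$ as a tower of representable morphisms over $\cK_{2,v}$, which is algebraic by Theorems \ref{Teo:we:have:alg:stack} and \ref{Teo:the:KSBA:stack:is:proper:DM}. The forgetful morphism $\cG_{2,v}\to\cK_{2,v}$ remembers only $(\cX,\phi)$, so it suffices to show this morphism is representable by algebraic stacks. Let $(\pi:\sX\to\cK_{2,v},\phi)$ be the universal twisted stable pair and $\sD:=\sD_\phi$ the universal divisor; each fiber $\sD_b$ is a reduced curve with at worst nodal singularities since $(\sX_b,\sD_b)$ is slc of dimension two.

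The first step is to parametrize the data $(\cD^n,g)$ of (2). Apply the flattening stratification to the total-space normalization $\sD^\nu\to\sD$ to obtain a locally closed substack $\cN\subset\cK_{2,v}$ over which $\sD^\nu$ is flat with reduced fibers. Over $\cN$, the restriction $\sD^\nu|_{\cN}$ is then a canonical simultaneous normalization, and any other simultaneous normalization is canonically isomorphic to it by the universal property of normalization. Hence the groupoid of pairs $(\cD^n,g)$ is equivalent to $\cN$, which is algebraic. On $\cN$, the singular locus $\sD^{\mathrm{sing}}$ is finite \'etale (by equisingularity on the equinormalizable stratum), and its preimage $\Delta:=g^{-1}(\sD^{\mathrm{sing}})\subset\sD^\nu$ is representable---a scheme near each of its points---by Lemma \ref{Lemma:AH:stack:iso:along:nodes} and Corollary \ref{Cor:the:normalization:of:a:stacky:curve:when:we:swap:branches:is:a:scheme}. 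Decomposing $\cN$ by the locally constant degree $n$ of $\Delta\to\cN$, the data of $n$ disjoint sections $\sigma_i:B\to\sD^\nu$ with $\bigsqcup_i\sigma_i(B)=\Delta_B$ as in (3) is equivalent to a trivialization of $\Delta$, and is therefore parametrized by the finite \'etale algebraic space $\underline{\Isom}_\cN\bigl(\coprod_{i=1}^n\cN,\,\Delta\bigr)$.

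For the last layer, the involution $\tau$ of (4) is a section of the relative automorphism stack $\underline{\Aut}_B(\sD^\nu)$, which is algebraic by \cite{hom}*{Proposition 2.1.3} as $\sD^\nu\to B$ is proper, flat, and DM. The conditions $\tau^2=\Id$ and $\tau\bigl(\bigsqcup_i\sigma_i(B)\bigr)=\bigsqcup_i\sigma_i(B)$ cut out a closed subspace, while fiberwise generic fixed-point-freeness---equivalent, since $\sD^\nu$ is smooth of dimension one, to $\tau\neq\Id$ on every fiber---is open by separatedness of $\underline{\Hom}_B(\sD^\nu,\sD^\nu)$. Combining the three representable additions proves that $\cG_{2,v}$ is algebraic. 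The main technical obstacle is the first step: verifying that on the flattening stratum with reduced fibers the flat normalization $\sD^\nu$ really is a fiberwise normalization, and that every simultaneous normalization arises uniquely this way. For schemes this is the classical equinormalizability criterion, and in the orbifold setting one reduces to the scheme case via \'etale charts, using that $\sD$ is already a scheme near its nodes by Lemma \ref{Lemma:AH:stack:iso:along:nodes} and Corollary \ref{Cor:the:normalization:of:a:stacky:curve:when:we:swap:branches:is:a:scheme}.
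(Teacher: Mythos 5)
Your tower strategy (sections via an Isom-space, the involution via an Aut/Hom-stack) is broadly parallel to the paper's construction of $\sH_2$ and $\sH_3$, but your first step --- parametrizing the pairs $(\cD^n,g)$ by a locally closed substack $\cN\subseteq\cK_{2,v}$ obtained from the flattening stratification of the total-space normalization of the universal divisor --- has a genuine gap, and it is exactly the step you flag as the main technical obstacle. The functor $\cG_{2,v}$ must be evaluated on arbitrary bases $B$, including non-reduced ones (Artinian bases are the whole point of making the gluing functorial), and there the argument breaks in several ways. First, the flattening stratification of $\sD^\nu$ has a universal property only for flatness of the \emph{fixed} sheaf $\cO_{\sD^\nu}$: for a map $B\to\cN$ the pullback of $\sD^\nu$ is flat over $B$, but it is not the normalization of $\cD_B$ (normalization does not commute with non-smooth base change, and over a non-reduced $B$ ``the normalization of the total space'' is not even the right object), so flatness with reduced fibers does not make it a simultaneous normalization of $\cD_B\to B$. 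Second, your uniqueness claim --- ``any other simultaneous normalization is canonically isomorphic to it by the universal property of normalization'' --- is unavailable over a non-reduced $B$: the source of a competing simultaneous normalization need not be a normal scheme and the target is non-reduced, so there is no universal property to invoke; without this, the groupoid of pairs $(\cD^n,g)$ over $B$ is not equivalent to the set $\Hom_{\cK_{2,v}}(B,\cN)$, i.e.\ the first layer of your tower is not representable by a monomorphism into $\cK_{2,v}$ in the way you assert. The paper sidesteps all of this by treating $(\cD^n,g)$ as honest extra data: it forms a Hom-stack over the moduli of (polarized) orbifold curves times $\cK_{2,v}$ and then cuts out, by conditions that refer only to fibers (finiteness, surjectivity, generic isomorphism, representability, cokernel-support and fiber-dimension semicontinuity), the locus where $g$ is fiberwise a normalization. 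The stratification you are implicitly relying on appears in the paper only later (Lemma \ref{Lemma:sim:norm:curve}, Proposition \ref{Prop:the:stratification:is:finite}), to show that the \emph{images} of the gluing morphisms stratify the boundary --- not to define $\cG_{2,v}$.

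A secondary, more easily repaired error is in your last layer: for a family whose fibers $\cD^n_b$ are typically disconnected (they are normalizations of possibly reducible divisors), ``fiberwise generically fixed point free'' is \emph{not} equivalent to $\tau\neq\Id$ on every fiber, since $\tau$ may act as the identity on one irreducible component and nontrivially on another. The paper handles this correctly by forming the fixed locus $F$ of $\tau$ as a fiber product and imposing that $F\to\sH_3$ has zero-dimensional (equivalently finite) fibers, which is open by upper semicontinuity of fiber dimension; it also cuts out $\tau^2=\Id$ by a cokernel-support argument rather than as a closed condition inside an Aut-space. I would also not lean on the unproved assertion that $\sD^{\mathrm{sing}}$ is finite \'etale ``by equisingularity'' --- the paper instead imposes the condition $g\circ\sigma_i(b)\in(\cD^{\mathrm{sing}}_\phi)_b$ via a flattening stratification for the comparison map between $\bigsqcup_i\sigma_i$ and the $g$-singular locus.
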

\begin{proof}
We will construct $\cG_{2,v}$ one condition at the time.
It suffices to construct $\cG_{2,v}$ for a fixed choice of $n$ of point (3), and then to construct $\cG_{2,v}$ by taking an union over $n \in \mathbb{N}$. Therefore, from now on we consider the number of sections $n$ as part of the data.

First,
consider $\sD \to \cK_{2,v}$ the universal divisor.
Consider the stack $\sO:=\cO rb^{\cL}$ parametrizing polarized orbifold curves, and let $\sC \to \sO$ be the universal curve. Consider now $\sH_1:=\cH om_{\sO \times \cK_{2,v}}(\sC \times\cK_{2,v}, \sO \times \sD) $, where for the definition of this Hom stack we refer to \cite{AOV}*{Appendix C}.
Over $\cH_1$ we have an universal curve $\sC_1$, obtained from the morphism $\sH_1 \to \sO$. Consider then $\sH_2:=\cH om_{\sH_1}(\sH_1,\sC_1)^{\times_{\sH_1}^n}$, which is the stack that parametrizes $n$ sections of $\cC_1 \to \cH_1$. Let $\sC_2$ be the universal curve of $\sH_2$,
and finally consider $\sH_3:=\cH om_{\sH_2}(\sC_2,\sC_2)$. Over a base $B$, the objects of $\sH_3(B)$ are the following:
\begin{enumerate}
\item $(f:\cX \to B, \phi:\omega_f \to \cL)$, an object of $\cK_{2,v}(B)$;
    \item  $\cC \to B$ is a flat family of orbifold nodal curves and a map $g:\cC \to \cD$;
    \item  $n$ sections $\sigma_i:B \to \cC$, and
    \item $\tau:\cC \to \cC$ a morphism.\end{enumerate} Let $\sC$ be the universal curve over $\sH_3$, and let $\sD_3 \to \sH_3$ be the pulll-back of $\sD$.
    
    Now, according to \cite{AOV}*{Appendix C}, there is an open substack of $\sH_1$ which parametrizes representable morphisms $g$.
    So up to replacing $\sH_1$ with this open substack, we can assume $g$ to be representable. We will denote with $\pi:\sC \to \sC^{\text{r.c.}}$ the relative coarse space of $\sC \to \sH_3$, and let $U \subseteq \sC$ to be the locus where $\pi$ is an isomorphism, and $\sC \to \sH_3$ is smooth. Having the sections $\sigma_i$ to be disjoint, and to map to $U$, is an open condition.
    Then up to shriking $\sH_3$, we can assume $\sigma_i$ to be disjoint, and to map to $U$. 
    
    From the upper semicontinuity of the dimension of the fibers, we can also assume that the morphism $\sC \to \sD$ is quasi-finite. But then it is finite since it is also proper and representable. Consider now the morphism $\cO_{\sD} \to g_* \cO_{\sC}$, let $\sK_1$ be its kernel, and let $\sS_1$ be the support of $\sK_1$. Then $\sS_1 \to \sH_3$ is proper. Thus from the upper-semicontinuity of the dimension of the fiber, applied to $\sS_1 \to \sH_3$, there is an open substack where $\sS_1$ is empty so $g$ is dominant. Since $g$ is proper it must also be surjective on this subset. Next let $\sK_2$ be the cokernel of $\cO_{\sD} \to g_*\cO_{\sC}$ and $\sS_2$ its support. As before, by upper-semicontinuity there is an open substack where $\sS_2$ is $0$-dimensional and so $g$ is generically an isomorphism. Since it is also a representable morphism of nodal curves, the locus where it is not an isomorphism must be contained in the nodes. 
    
    To recap, we have now cut out an algebraic stack where $g$ is a simultaneous normalization and $\sigma_i(B)$ are disjoint sections. We need to identify the locus where:
    \begin{itemize}
        \item $(g \circ \sigma_i )(b) \in \sD_b^{\text{sing}}$ for every $b \in B$;
        \item Fiber by fiber, $\tau$ is a generically fixed point free involution.
    \end{itemize}
    To address the first bullet point, consider $\sS' \to \sD$ the inclusion of the $g$-singular locus. This is a closed embedding, and consider the following fibred diagram:
    $$\xymatrix{F \ar[d]_h \ar[r] & \sS' \ar[d] \\ \bigsqcup_{i=1}^n \sigma_i(\sH_3) \ar[r] & \sD}$$
    Then $h$ will be a closed embedding. Requiring $h$ to be an isomorphism is equivalent to the first bullet point.
    Then the flattening stratification guarantees that there is a well-defined closed substack where $h$ is an isomorphism. In other terms, up to replacing $\sH_3$ with a locallly closed substack, we can assume that the first bullet point is satisfied.

Finally, being an isomorphism is an open condition, so there is an open substack of $\cH_3$ where $\tau$ is an isomorphism. Observe now that if $\sigma$ is an isomorphism of an orbifold nodal curve, such that it agrees with the identity on an open dense subset, then $\sigma=\Id$. Thus if $\tau$ fixes a generic point of $\cC$, then it fixes the irreducible component that is its closure.

Consider then the following fiber diagram:
$$ \xymatrix{F \ar[d]_\psi \ar[r] & \sC \ar[d]^{\text{Diag}} \\
\sC \ar[r]_-{(\Id,\tau)} & \sC \times_{\cH_3}\sC.}$$
$F$ is the fixed locus of $\tau$. We need to cut out the locus where $F$ contains no irreducible components of $\cC$. This is equivalent to $F \to \cH_3$ being finite so by semi-continuity of fiber dimension, there is an open subset where $\tau$ fixes no generic points. Similarly, to ensure that $\tau^2=\Id$, we can replace in the diagram above $\tau$ with $\tau^2$. Let $F'$ be the new fiber product we obtain. Then the locus where $\tau^2=\Id$ is the locus where $\pi_1:F' \to \sC$ is surjective. Or in other therms, where the kernel of the map $\cO_{\sC} \to (\pi_1)_*\cO_{F'}$ is zero. This is the locus where the support of $\operatorname{Coker}(\cO_{\sC} \to (\pi_1)_*\cO_{F'})$ is empty: it is an open substack of $\cH_3$.
\end{proof}

Putting this together with Theorem \ref{Teo:gluing:morphisms} we obtain:

\begin{theorem}\label{Teo:gluing:morphism:between:moduli} There is a functorial gluing morphism
$\cG_{2,v} \to \cK_{2,v}^\omega$ from the stack of gluing data to the stack of twisted stable surfaces. \end{theorem}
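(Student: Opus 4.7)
The strategy is to apply Theorem~\ref{Teo:gluing:morphisms} to the universal object over $\cG_{2,v}$ and then verify that the output is a family in $\cK_{2,v}^\omega$. Concretely, let $$((\sf:\sX\to\cG_{2,v},\Phi:\omega_{\sf}\to\sL);\ g:\sD^n\to\sD_\Phi;\ \{\sigma_i\};\ \tau)$$ be the universal gluing datum provided by Proposition~\ref{Prop:we:have:the:stack:curly:G}. Working smooth-locally on $\cG_{2,v}$ so as to reduce to the case of a scheme base, Theorem~\ref{Teo:gluing:morphisms} produces a flat proper DM stack $\sZ\to\cG_{2,v}$ together with the morphisms $\sX\to\sZ$ and $[\sD^n/\tau]\to\sZ$ fitting in the commutative square. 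The universal $\sZ$ then glues because, by Lemma~\ref{Lemma:pushout:commutes:with:bc} and \cite{AOV}*{Proposition 3.4}, the intermediate relative coarse moduli space $\sC$ and each of the pinchings producing $\sS$, $\sQ$, and $\sZ$ in the proof of Theorem~\ref{Teo:gluing:morphisms} commute with arbitrary base change, so the formation of $\sZ$ descends and is functorial in $B\to\cG_{2,v}$.

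Next I verify that $\sZ\to\cG_{2,v}$ belongs to $\cK_{2,v}^\omega$. Flatness, properness, and the DM property are built into the construction. For each geometric point $b\in\cG_{2,v}$, Theorem~\ref{Teo:gluing:morphisms} identifies $\sZ_b$ with the AH stack of the stable surface $X'_b$ obtained from $(X_b,D_b)$ and $\tau_b$ via \cite{Kollarsingmmp}*{Theorem 5.13}, and by Proposition~\ref{Prop:pushout:gives:the:AH:stack:over:speck} this fiber is Gorenstein with slc singularities. Since the normalization of $X'_b$ is $X_b$ with conductor $D_b$, one has $(K_{X'_b})^2=(K_{X_b}+D_b)^2=v$, so the fibers have the prescribed volume.

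It remains to produce the canonical polarization on $\sZ$. By Proposition~\ref{Prop:no:different:on:stack} the restriction $\sL\big|_{\sD^n}$ is canonically isomorphic to $\omega_{\sD^n/\cG_{2,v}}(\Delta)$, which is $\tau$-equivariantly trivialized up to the sign character by Proposition~\ref{Prop:line:bundle:which:gives:rep:morphism:Ddividedbytau:to:BGm}; hence the combined datum of $\sL$ on $\sX$ and $\sG\otimes\chi_{[\sD^n/\tau]}$ on $[\sD^n/\tau]$ glues through the iterated pushouts to a line bundle $\sM$ on $\sZ$. Fiberwise $\sM_b$ is identified with $\omega_{\sZ_b}$ by Proposition~\ref{Prop:pushout:gives:the:AH:stack:over:speck} and the fiberwise analogue of the calculation above, and by Lemma~\ref{Lemma:unique:arrow:reflexive:hull} applied to the relative Gorenstein locus one gets a canonical identification $\sM\cong\omega_{\sZ/\cG_{2,v}}$. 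This $\sM$ is a polarizing line bundle because it is so on each fiber and its power descends to an ample line bundle on the coarse space $Z\to \cG_{2,v}$ obtained from Koll\'ar's construction. Thus $(\sZ\to\cG_{2,v},\omega_{\sZ/\cG_{2,v}})$ is a family in $\cK_{2,v}^\omega$, which defines the desired morphism. Functoriality under morphisms in $\cG_{2,v}$ follows from the universal properties of the relative coarse moduli space and of the pinchings in Theorem~\ref{Teo:Rydh}.

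The main obstacle is the construction of the polarizing line bundle $\sM$ and its identification with $\omega_{\sZ/\cG_{2,v}}$: one must check compatibility of $\sL$ and $\omega_{\sD^n/\cG_{2,v}}(\Delta)\otimes\chi$ along every map in Diagram~\eqref{eqn:big:diagram}, show that these compatibilities descend through the pinchings of Theorem~\ref{Teo:Rydh}, and confirm that the resulting global line bundle is reflexive and agrees with the relative dualizing sheaf. Once this is in place, fiberwise agreement with the gluing of \cite{Kollarsingmmp}*{Theorem 5.13} together with base-change compatibility of the pushouts reduces the remainder to bookkeeping.
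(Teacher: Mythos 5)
Your proposal follows the paper's route: the paper's entire proof is to combine Proposition \ref{Prop:we:have:the:stack:curly:G} (the algebraicity of $\cG_{2,v}$) with Theorem \ref{Teo:gluing:morphisms} applied to families of gluing data, using that the construction of $\sZ$ commutes with base change, and your first two paragraphs carry out exactly this. The one place where you diverge is the step you single out as the ``main obstacle,'' namely gluing $\sL$ and $\sG\otimes\chi_{[\sD^n/\tau]}$ through the pushouts of Diagram \eqref{eqn:big:diagram} to manufacture a polarizing line bundle $\sM$ and then identifying it with $\omega_{\sZ/\cG_{2,v}}$: this detour is unnecessary. By Proposition \ref{Prop:pushout:gives:the:AH:stack:over:speck} each fiber $\sZ_b$ is Gorenstein, so the flat proper morphism $\sZ\to B$ is Gorenstein; hence $\omega_{\sZ/B}$ is already a line bundle, compatible with arbitrary base change, restricting to $\omega_{\sZ_b}$ on fibers, and ample there because $\sZ_b$ is the AH stack of a stable surface. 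In particular Koll\'ar's condition and the polarization come for free, and the remaining checks (slc fibers, no stabilizers in codimension one, volume $(K_{X'_b})^2=(K_{X_b}+D_b)^2=v$) are fiberwise statements exactly as you argue, so the family defines the desired functorial morphism $\cG_{2,v}\to\cK_{2,v}^\omega$ without any gluing of line bundles.
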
 

\subsection{The boundary strata of $\cK_{2,v}^\omega$}\label{subsec:boundary:strata}

In this subsection, we show that there is a locally closed stratification of $\cK_{2,v}^\omega$
of equinormalizable surfaces with equinormalizable double locus which are the images of the gluing morphisms above. 

\begin{Lemma}\label{Lemma:sim:norm:curve} Let $f : \cY \to B$ be a proper family of generically reduced DM stacks over a base scheme $B$.
Then there exists a locally closed stratification of $B$ over which $f$ is simultaneously normalizable.
\end{Lemma}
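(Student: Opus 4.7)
The plan is to argue by Noetherian induction on $B$. Namely, it suffices to find a dense open $U \subseteq B$ over which $f|_U$ admits a simultaneous normalization; then, applying the statement recursively to $B \setminus U$ with the reduced induced subscheme structure, one obtains the required locally closed stratification. So the entire proof reduces to producing such a dense open.

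For the main step, I would first replace $B$ by a dense open of one of its irreducible components (with reduced structure), so we may assume $B$ is integral. The normalization commutes with restriction to a dense open, so it is harmless to replace $\cY$ by $\cY_{\mathrm{red}}$; and since $B$ is integral and the fibers are generically reduced, shrinking $B$ we may further arrange that $\cY$ itself is reduced and every fiber is reduced (this uses openness of reducedness on the base, see \cite{EGAIV}*{Théorème 12.2.1}, after noting that $\cY \to B$ is flat on a dense open by generic flatness). Let $\nu : \tilde\cY \to \cY$ be the normalization morphism, which is finite since $\cY$ is excellent. The construction of $\tilde\cY$ as well as the remaining flatness/normality arguments can be made on an étale atlas by a scheme since normalization commutes with étale base change; we silently do so below.

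Applying generic flatness to $\tilde f := f \circ \nu : \tilde \cY \to B$, we find a dense open $U_1 \subseteq B$ over which $\tilde f$ is flat. The generic fiber $\tilde \cY_\eta$ equals $(\cY_\eta)^n$ and is in particular normal. By openness of the normal locus in a flat proper family (again \cite{EGAIV}*{Théorème 12.2.4}, applied to the non-normal locus of $\tilde f$, which is proper over $B$), after shrinking $U_1$ we may assume every geometric fiber $\tilde \cY_b$ is normal. Finally, let $Z \subseteq \cY$ be the closed substack where $\nu$ is not an isomorphism; by upper semi-continuity of fiber dimension applied to the proper map $Z \to B$, after shrinking $U_1$ once more we may assume that for every $b \in U_1$, $Z_b$ does not contain any generic point of $\cY_b$, i.e.\ $\nu_b : \tilde\cY_b \to \cY_b$ is birational. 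Combined with the fact that $\nu_b$ is finite and $\tilde\cY_b$ is normal, this forces $\nu_b$ to be the normalization of $\cY_b$, which yields the required simultaneous normalization over $U_1$.

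The main technical obstacle is the last step: simultaneously controlling normality of the fibers $\tilde\cY_b$ and birationality of $\nu_b$. Both are handled by the same type of argument — openness/semi-continuity applied to proper subsets of $\tilde\cY$ (the non-normal locus) and of $\cY$ (the non-isomorphism locus of $\nu$) — but care is needed to ensure all shrinkings are compatible and that the arguments descend along the étale atlas chosen to reduce from stacks to schemes.
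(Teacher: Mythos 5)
Your overall strategy coincides with the paper's: Noetherian induction reduces the statement to finding a dense open of $B$, one normalizes the total space, applies generic flatness to $\tilde f=f\circ\nu$, and uses openness of the normal-fiber locus plus properness to arrange that the fibers of $\tilde\cY\to B$ are normal. The only genuine divergence is the last step: the paper shrinks $B$ so that it is smooth and observes that $\nu$ is an isomorphism over the $f$-smooth locus, which in characteristic $0$ contains every generic point of every fiber because the fibers are generically reduced; you instead work with the non-isomorphism locus $Z$ of $\nu$ and semicontinuity of fiber dimension for $Z\to B$.

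As written, that last step has a gap when the fibers of $f$ are not equidimensional (nothing in the hypotheses rules this out): a bound $\dim Z_b\le\dim Z_\eta<\dim\cY_\eta$ does not prevent $Z_b$ from containing the generic point of a low-dimensional irreducible component of $\cY_b$. The fix is to run the dimension count component by component: after shrinking $B$ to remove the images of components of $\cY$ not dominating $B$, for each remaining component $Y_i$ the generic fiber of $Z\cap Y_i\to B$ is nowhere dense in $(Y_i)_\eta$ (as $\cY$ is reduced, hence normal at its generic points), so after shrinking $\dim(Z\cap Y_i)_b<e_i:=\dim Y_i-\dim B$, while every component of every fiber of $Y_i\to B$ has dimension at least $e_i$; this does exclude generic points of $\cY_b$ from $Z_b$. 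Relatedly, ``finite, surjective, normal source, isomorphism over a dense open of the target'' does not by itself force $\nu_b$ to be the normalization of $\cY_b$: one must also rule out components of $\tilde\cY_b$ mapping into $Z_b$ (the normalization of a nodal curve with an extra point glued in over the node satisfies all the properties you list). Here flatness of $\tilde f$ is what saves you: it pins the dimension of each component of $\tilde\cY_b$ to the relative dimension of the component of $\tilde\cY$ containing it, and the same component-wise bound on $(Z\cap Y_i)_b$ excludes such junk components. (The paper is admittedly terse on this last point as well, but its smooth-locus argument yields the isomorphism at all generic points of all fibers directly, with no dimension count.)
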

\begin{proof} Using Noetherian induction, it suffices to prove that there is a nonempty open subset of $B$ where $f$ is simultaneously normalizable.
Up to replacing $B$ with its reduced structure, we can assume that $B$ is reduced. Then it is generically smooth, so up to further shrinking $B$ we can assume that it is smooth and connected.
Consider $\nu:\cY^n \to \cY$ the normalization.
Up to shrinking $B$ we can assume that $\cY^n \to B$ is flat. The generic geometric fiber is normal and the locus $\cU$ in $\cY$ where the fibers are
normal is open from
\cite{EGAIV}*{Théorème 12.2.6}. So its complement $\cZ:=\cY \smallsetminus \cU$ is closed, and since $f$ is proper,
$f(\cZ)$ is closed too. Then up to shrinking $B$ we can assume that $\cY^n \to B$ has normal fibers. 

But $\nu$ is an isomorphism on the smooth locus of the morphism $\cY \to B$, since $B$ is normal. In particular, for every $b \in B$,
the map $\cY_b^n \to \cY_b$ is an isomorphism at the generic points of $\cY_b$, and it is finite. So it is a simultaneous normalization.
 \end{proof}
\begin{Prop}\label{Prop:the:stratification:is:finite}
There is a finite, locally closed stratification of $\cK_{2,v}^\omega$ such that
each stratum is the image of a family of gluing data under the gluing morphism $\cG_{2,v} \to \cK_{2,v}^\omega$.
In particular, taking the scheme theoretic images of components under the gluing morphism stratifies $\cK_{2,v}^\omega$
into a finite, locally closed union of boundary components. 
\end{Prop}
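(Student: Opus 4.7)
The strategy is Noetherian induction. The stack $\cK_{2,v}^\omega$ is Noetherian by Theorem \ref{Teo:the:KSBA:stack:is:proper:DM}, and the image of $\cG_{2,v} \to \cK_{2,v}^\omega$ is constructible by Chevalley's theorem (both stacks being of finite type, since only finitely many values of the integer $n$ in Proposition \ref{Prop:we:have:the:stack:curly:G} are relevant for surfaces of volume $v$ -- the number of nodes of the conductor of a twisted stable surface of fixed volume is bounded). Therefore it suffices to show: every non-empty reduced closed substack $\cZ \subseteq \cK_{2,v}^\omega$ contains a non-empty open $\cU \subseteq \cZ$ lying in the set-theoretic image of $\cG_{2,v}$. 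By Noetherian induction this produces the desired finite locally closed stratification of $\cK_{2,v}^\omega$ by images of gluing data, and the ``In particular'' statement then follows by taking scheme-theoretic images of the finitely many irreducible components of $\cG_{2,v}$ that dominate a component of $\cK_{2,v}^\omega$.

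Fix such a $\cZ$ and let $\sY \to \cZ$ be the restriction of the universal family. Apply Lemma \ref{Lemma:sim:norm:curve} -- whose proof works without change for families of any dimension, since it only uses properness, generic flatness, and the openness of the normal locus -- to shrink $\cZ$ so that $\sY$ admits a simultaneous normalization $\nu: \sX \to \sY$. Let $\sD \subseteq \sX$ denote the scheme-theoretic preimage of the reduced double locus of $\sY$; fiberwise this is the conductor divisor of $\nu$. After further shrinking $\cZ$ to ensure $\sD \to \cZ$ is flat, a second application of Lemma \ref{Lemma:sim:norm:curve} yields a simultaneous normalization $g: \sD^n \to \sD$. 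Setting $\sL := \nu^*\omega_{\sY/\cZ}$ and using Lemma \ref{Lemma:unique:arrow:reflexive:hull} to extend the tautological morphism from the relative Gorenstein locus, one checks that $(\sX \to \cZ,\, \phi: \omega_{\sX/\cZ} \to \sL)$ is an object of $\cK_{2,v}$ with associated divisor $\sD_\phi = \sD$.

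The remaining gluing data is obtained as follows. The relative singular locus $\sN \subseteq \sD$ is a finite closed substack over $\cZ$, so after shrinking $\cZ$ further and replacing it with a connected étale cover $\cZ' \to \cZ$ (an operation that does not affect the image in $\cK_{2,v}^\omega$), the preimage $\sD^n \times_\sD \sN$ becomes a disjoint union of sections $\{\sigma_i: \cZ' \to \sD^n_{\cZ'}\}$. To construct the involution $\tau$, consider the relative fiber product $\sD^n \times_{\bar\sD} \sD^n$, where $\bar\sD \subseteq \sY$ is the image of $\sD$ under $\nu$: fiberwise this decomposes as the diagonal together with a swap component that is the graph of the involution from \cite{Kollarsingmmp}*{Proposition 5.12}. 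After shrinking $\cZ'$ using upper semicontinuity of fiber dimension, the swap component is a closed substack whose two projections to $\sD^n_{\cZ'}$ are isomorphisms, producing the desired involution $\tau: \sD^n_{\cZ'} \to \sD^n_{\cZ'}$ over $\cZ'$. It preserves $\bigsqcup \sigma_i(\cZ')$ and is fiberwise generically fixed-point-free by construction.

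This data assembles into an object of $\cG_{2,v}(\cZ')$, producing a morphism $\cZ' \to \cG_{2,v}$ whose composition with the gluing morphism agrees with $\cZ' \to \cZ \hookrightarrow \cK_{2,v}^\omega$ by Theorem \ref{Teo:gluing:morphisms} together with \cite{Kollarsingmmp}*{Theorem 5.13} applied fiberwise. Since $\cZ' \to \cZ$ is surjective, the shrunken $\cZ$ lies in the set-theoretic image of the gluing morphism, completing the induction. The main technical obstacle is the global construction of $\tau$: although the fiberwise involution exists by \cite{Kollarsingmmp}*{Proposition 5.12}, globalizing it as a morphism of stacks over the base requires the fiber-product decomposition sketched above, together with a careful shrinking to the open substack of $\cZ'$ where the diagonal and swap components of $\sD^n \times_{\bar\sD} \sD^n$ are disjoint and the projections from the swap component are isomorphisms.
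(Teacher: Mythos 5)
Your proposal is correct in outline and, despite the different packaging, runs along essentially the same track as the paper: stratify using Lemma \ref{Lemma:sim:norm:curve} (which, as you note, is already stated for families of DM stacks of any dimension) so that the universal surface and its double locus are simultaneously normalizable, pass to an \'etale cover to get the sections over the nodes, produce the involution after further shrinking, and conclude via \cite{Kollarsingmmp}*{Theorem 5.13} that each resulting stratum is the image of the family of gluing data so constructed. Your explicit Noetherian-induction framing and your spelling out of the twisted stable pair structure on the normalized family ($\sL=\nu^*\omega_{\sY/\cZ}$, with $\phi$ extended from the relative Gorenstein locus via Lemma \ref{Lemma:unique:arrow:reflexive:hull}) are welcome additions that the paper leaves implicit. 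The genuine divergence is the construction of $\tau$: the paper obtains a rational involution from the generically $2$-to-$1$ map of the normalized conductor onto the double locus and then extends it to a morphism of stacks using Lemma \ref{Lemma:we:have:tau:on:the:stack} (the root-stack universal property), whereas you extract $\tau$ from the swap component of $\sD^n\times_{\bar\sD}\sD^n$. Your mechanism is fine away from the stacky points, but exactly at the $\bmu_r$-points of $\sD^n$ fixed by the fiberwise involution the graph of a morphism of DM stacks is only finite unramified, not a closed immersion, so verifying that the first projection from the (reduced) swap component is an isomorphism of \emph{stacks}, rather than merely of coarse spaces, is the same issue the paper isolates in Lemma \ref{Lemma:we:have:tau:on:the:stack}; you would most likely end up invoking that lemma (or reproving it) at this point, so your route buys a cleaner global construction over the base at the price of no real savings at the stacky points. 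Finally, your opening appeal to Chevalley constructibility is both unnecessary for your induction and not justified as stated: $\cG_{2,v}$ is built in Proposition \ref{Prop:we:have:the:stack:curly:G} as a union over all $n\in\bN$ and the paper never shows it is of finite type; your boundedness remark about the number of nodes of the conductor is plausible but would itself require an argument, so it is best simply deleted.
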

\begin{proof} Let $\sX\to \cK_{n,v}^\omega$ be the universal twisted stable surface.
From Lemma \ref{Lemma:sim:norm:curve} there is a locally closed embedding $\sS \to \cK_{2,v}^\omega$
where the fibers of $\sX\to \cK_{n,v}^\omega$ admit a simultaneous normalization. Let $\sX':=\sX \times_{\cK_{2,v}^\omega}\sS $ be the pull back, and let $\sD$ be the closed substack which cuts the fiberwise double locus.
Again from Lemma \ref{Lemma:sim:norm:curve}, there is a locally closed embedding where
the fibers of $\sD \to \sS$ are
simultaneously normalizable. So up to further stratifying $\sS$ we can assume that both $\sX'$ and $\sD$ admit a simultaneous normalization. Let $\nu:\sD^n \to \sD$ be the simultaneous normalization of $\sD \to \sS$. 

Let $\sZ$ be the singular locus of $\sD \to \sS$, and $F$ its pull back through $\nu$.
Up to stratifying further $\sS$, we can assume that $F \to \sS$ is étale, and up to replacing $\sS$ with an étale cover, we can assume that there are disjoint sections sections $\sigma_i:\sS \to \sD^n$ which surject to $F$. 

The composition $\sD^n \to \sX'$ is generically a $2$ to $1$ cover onto its image so $\sD^n$ is equipped with a rational involution $\tau$.
After further stratifying and applying Lemma \ref{Lemma:we:have:tau:on:the:stack} we can assume that $\tau$ is a morphism. One can check fiber by
fiber to see that $\tau$ preserves $\sqcup \mathrm{Im}(\sigma_i)$. This gives a family of gluing data whose image in $\cK_{2,v}^\omega$
is this stratum. Finally, by \cite{Kollarsingmmp}*{Theorem 5.13}, every point of $\cK_{2,v}^\omega$ corresponding to a non-normal twisted stable surface lies in such a stratum. 
\end{proof}

\begin{Remark} 
Proposition \ref{Prop:the:stratification:is:finite} only posits the existence of some finite stratification and says nothing about how to enumerate the strata, nor the
components of $\cG_{2,v}$.
We include it to rule out pathological behaviour like the image of $\cG_{2,v}$ being an infinite disjoint union of points.
One hopes for a more functorial stratification described in terms of combinatorial and numerical
data of the surfaces, as well as a generalization of functorial gluing morphisms to higher dimensions.
Doing this will require a generalization of Koll\'ar's theory of hulls and husks, e.g. \cite{Kol11},
to cyclotomic stacks, which we will pursue in a future article. 
\end{Remark}

\section{The deformation space of an elliptic K3 surface with a section and a fiber}\label{section_example}

The goal of this section is to give an explicit computation of the deformations and obstructions for Koll\'ar families of stable pairs using the formalism of twisted stable pairs. 

Consider a pair $(Y,E+F)$ consisting of a generic elliptic K3 surface with section $E$ and smooth fiber $F$. This pair is of log general type and its log canonical model $(X,D)$ is obtained by contracting the section $p : Y \to X$ and taking $D = p_*E$. Since $p$ contracts a $(-2)$-curve, $(X,D)$ is a Gorenstein stable pair with an $A_1$-singularity and $D$ is a smooth curve passing through the singular point. Note in particular that $D$ is not Cartier but $2D$ is. 

Since $E$ is rigid and $F$ moves in a pencil, the pair $(Y,E+F)$ is parametrized by a smooth 19-dimensional moduli space sitting inside a $\mathbb{P}^1$-bundle over the 18-dimensional moduli space of $U$-polarized K3 surfaces where $U$ is the standard hyperbolic plane (see e.g. \cite{k3paper}). Thus one might expect that the moduli space of the stable pair $(X,D)$ should agree with the $19$-dimensional moduli space of $(Y, E+F)$ described above. It is not hard to see that this is the case set theoretically but \emph{a priori} the infinitessimal deformation space of $(X,D)$ might be larger (see \ref{remark:def}).

Analytically locally around the $A_1$-singularity, the pair $(X,D)$ is isomorphic to the quotient of $(\mathbb{A}^2, \{x = 0\})$ by the action $(x,y) \mapsto (-x,-y)$ so the associated AH stack $\cX$ is smooth with Cartier boundary divisor $\cD$. In particular, the data of the divisor $\cD$ is equivalent to the data of a morphism $q: \cX \to \Theta:= [\mathbb{A}^1/\mathbb{G}_m]$, and to understand the deformations and obstructions of $(\cX,\cD)$ it suffices to compute the deformations and obstructions of $q$. Note also that $X$ is Gorenstein, $p$ is crepant, and the action fixes $dx \wedge dy$ so $\omega_X \cong \cO_X$ and $\omega_\cX \cong \cO_\cX$ are both trivial. 

A standard computation shows that the cotangent complex $\bL_{\Theta} \simeq_{qis} [\cO_\Theta \xrightarrow{\cdot x} \cO_\Theta]$ in degrees $[0,1]$ where $x$ is the $\mathbb{G}_m$-invariant coordinate on $\mathbb{A}^1$. Moreover, the map $q: \cX \to \Theta$ induced by the Cartier divisor $\cD$ is flat so $q^*\bL_\Theta[1] \simeq_{qis} \cO_\cD$. Since $\cX$ is smooth over $k$, the transitivity triangle for the composition $\cX \to \Theta \to \spec(k)$ shifted by $1$ yields
$$
\Omega^1_\cX[0] \to \bL_{\cX/\Theta} \to \cO_\cD[0] \xrightarrow{+1}.
$$
In particular, $\cH^i(\bL_{\cX/\Theta}) = 0$ if $i \neq 0$ and we have a short exact sequence
$$
0 \to \Omega^1_{\cX} \to \cH^0(\bL_{\cX/\Theta}) \to \cO_\cD \to 0 
$$
which identifies $\cH^0(\bL_{\cX/\Theta})$ with the sheaf of logarithmic differentials $\Omega^1_{\cX}(\log \cD)$.

We will see (Propositions \ref{prop:def:X} \& \ref{prop:def:tsp}) that $Def(\cX)$ and $Def(\cX, \cD)$ are both smooth and $19$-dimensional. One can interpret this as follows. $Def(\cX)$ is identified with the deformation space of $(Y,E)$, and the $18$-dimensional $U$-polarized deformation space of $Y$ is identified with the deformations of the polarized pair $(\cX, \cL)$. Finally the $19$-dimensional deformation space of $(\cX, \cD)$ over $Def(\cX, \cD)$ can be identified with the $\mathbb{P}^1$-bundle described above.  

\begin{Remark}\label{remark:def} One can compute that the deformation space of the coarse space $X$ is smooth and $20$-dimensional. Moreover there are both local and global obstructions to deforming the non-Cartier subscheme $D$. The obstructions live in a $2$-dimensional space $\E^1(\Omega_X, \cO_D)$ and the obstruction map is in fact surjective. Thus the naive deformation space of $(X,D)$ is also $19$-dimensional, matching the deformation space of the twisted stable pair $(\cX,\cD)$. We thus conclude that all deformations of the pair $(X,D)$ lift to deformations of $(\cX,\cD)$ and in particular they all satisfy Koll\'ar's condition. However, this is far from clear \emph{a priori} and we only know how to check it by first computing the Koll\'ar deformations using the twisted stable pair $(\cX, \cD)$. 
\end{Remark}

\subsection{Deformation theory of $\cX$}\label{subsection_def_obs_for_XX}
The goal of this subsection is to compute $\E^i(\Omega^1_{\cX},\cO_{\cX})\cong H^i(\cX,T_{\cX})$, where $T_{\cX}$ denotes the tangent bundle of $\cX$. 

Let $f:\cX\to X$ be the coarse space map. Since $f$ is finite, surjective and cohomologically affine, 
$H^i(\cX,T_{\cX}) \cong H^i(X,f_*T_{\cX})$ and the sheaf $f_*T_{\cX}$ is reflexive. Consider now the contraction of the section $p:Y\to X$ where $Y$ is the elliptic K3 surface with section $E$ as above. The holomorphic symplectic form on $Y$ gives an identification $T_Y \cong \Omega^1_Y$ so by \cite[Theorem 1.4]{GKKP} we have that $p_*T_Y$ is reflexive. Both these sheaves are also isomorphic to the tangent sheaf $T_X$ over a big open subset of $X$ so by reflexivity we have
$$
p_*T_Y \cong T_X \cong f_*T_\cX. 
$$

By the Leray spectral sequence we have $H^0(X,T_X) \cong H^0(Y,T_Y)$ and 
$$0 \to H^1(X,T_X) \to H^1(Y,T_Y) \to H^0(X,R^1p_*T_Y) \to H^2(X,T_X) \to H^2(Y,T_Y).$$
Hodge theory of K3 surfaces gives us that
$$
h^{i}(Y, T_Y) =\left\{ \begin{array}{lr} 0 & i = 0 \\ 20 & i = 1 \\ 0 & i = 2\end{array} \right.
$$

\begin{Lemma}\label{lemma_R1push_forward_tangent_is_skyscraper}
With the notations above, we have $R^1p_*T_Y = H^1(E,T_Y|_E)_x \cong k_x$ is a skyscraper sheaf supported at the singular point of $X$.
\end{Lemma}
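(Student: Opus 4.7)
The plan is to identify $R^1 p_* T_Y$ as a skyscraper sheaf by combining the fact that $p$ is an isomorphism over $X \setminus \{x\}$ (so the sheaf is a finite length $\cO_{X,x}$-module, equal to its completion) with the theorem on formal functions to pin down the stalk. Formal functions gives
$$(R^1 p_* T_Y)_x^\wedge \;\cong\; \varprojlim_n H^1(E_n, T_Y|_{E_n}),$$
where $E_n$ is the $n$-th infinitesimal neighbourhood of the exceptional $(-2)$-curve $E$; once this limit is shown to be one-dimensional and equal to $H^1(E, T_Y|_E)$, the finite-length property upgrades the completion statement to an isomorphism $R^1 p_* T_Y \cong k_x$.

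The first concrete step is to compute $H^1(E, T_Y|_E)$. Since $E$ is a smooth rational $(-2)$-curve on a K3 surface, $E \cong \bP^1$, $T_E \cong \cO(2)$, and (by adjunction together with $K_Y = 0$) $N_{E/Y} \cong \cO(-2)$. The normal bundle sequence thus expresses $T_Y|_E$ as an extension of $\cO(-2)$ by $\cO(2)$ on $\bP^1$; the extension class lives in $H^1(\bP^1, \cO(4)) = 0$, so $T_Y|_E \cong \cO(2) \oplus \cO(-2)$ and $H^1(E, T_Y|_E) \cong k$.

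Finally, I would check that the higher infinitesimal neighbourhoods contribute nothing to the inverse limit. Filtering by powers of the ideal $\cI_E$ produces short exact sequences
$$0 \to T_Y|_E \otimes (N^*_{E/Y})^{\otimes n+1} \to T_Y|_{E_{n+1}} \to T_Y|_{E_n} \to 0,$$
whose graded pieces are $\cO(2n+4) \oplus \cO(2n)$ on $\bP^1$ and hence have vanishing $H^1$ for every $n \geq 0$. Combined with the dimensional vanishing $H^2(E_n, -) = 0$, every transition map in $\varprojlim H^1(E_n, T_Y|_{E_n})$ is an isomorphism, giving $\varprojlim H^1(E_n, T_Y|_{E_n}) \cong H^1(E, T_Y|_E) \cong k$. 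The main point—and the only place where the argument could have failed—is this vanishing on the graded pieces, which reflects the fact that $N^*_{E/Y} \cong \cO(2)$ has just enough positivity to absorb the negative $\cO(-2)$ summand of $T_Y|_E$ after a single twist, a feature specific to $(-2)$-curves.
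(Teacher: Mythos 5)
Your proposal is correct and follows essentially the same route as the paper: support at $x$ plus the theorem on formal functions, splitting of the normal bundle sequence to get $T_Y|_E \cong \cO(2)\oplus\cO(-2)$, and vanishing of $H^1$ on the graded pieces of the $\cI_E$-adic filtration to show all transition maps in the inverse limit are isomorphisms. The only differences are cosmetic (your infinitesimal neighbourhoods are indexed from $E_0=E$ rather than $E_1=E$, and you make the $H^2$ vanishing for surjectivity explicit).
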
\begin{proof} Since $p$ induces an isomorphism $Y \setminus E \to X \setminus p$, $R^1p_*T_Y$ is supported at $x$. Then
$$
R^1p_*T_Y = (R^1p_*T_Y)_x^{\widehat{}} = \varprojlim_{n} H^1(E_n, T_Y|_{E_n})
$$
by the theorem on formal functions where $E_n$ is the $n^{th}$ infinitessimal thickening of $E$ defined by ideal sheaf $\cI^n$. .

Since $E\cong \mathbb{P}^1$ and $E^2 = -2$, the normal bundle sequence for the regular embedding $i : E \to Y$ is 
$$
0 \to \cO_{\mathbb{P}^1}(2) \to T_Y|_{E} \to \cO_{\mathbb{P}^1}(-2) \to 0
$$
which splits since $\mathrm{Ext}^1(\cO_{\mathbb{P}^1}(-2),\cO_{\mathbb{P}^1}(2))=0$, giving $T_Y|_{E} \cong \cO_E(2) \oplus \cO_E(-2)$ and $H^1(E, T_Y|_E) = 1$.  

Consider the closed embedding $E_n \hookrightarrow E_{n + 1}$ for $n \geq 1$. There is a short exact sequence $0 \to \cI^n/\cI^{n + 1} \to \cO_{E_{n + 1}}  \to \cO_{E_n}  \to 0$ with $\cI^n/\cI^{n + 1} \cong \cO_E(2n)$, the $n^{th}$ power of the conormal bundle $\cO_E(2)$ of $E \subset Y$. Tensoring with $T_Y|_E$ we obtain
$$
0 \to \cO_E(2n + 2) \oplus \cO_E(2n - 2) \to T_Y|_{E_{n + 1}} \to T_Y|_{E_n} \to 0. 
$$
Since $H^1(\mathbb{P}^1, \cO(2n + 2) \oplus O(2n - 2)) = 0$ for all $n \geq 1$, we conclude that the map $H^1(E_{n + 1}, T_Y|_{E_{n + 1}}) \to H^1(E_n, T_Y|_{E_n})$ is an isomorphism and so 
$$
\varprojlim_n H^1(E_n, T_Y|_{E_n}) = H^1(E, T_Y|_E) = k.
$$
\end{proof}

Under the canonical isomorphism in Lemma \ref{lemma_R1push_forward_tangent_is_skyscraper}, the edge map $H^1(Y, T_Y) \to H^0(X, R^1p_*T_Y)$ can be identified with the restriction $H^1(Y, T_Y) \to H^1(E, T_Y|_E)$. 

\begin{Lemma}\label{lemma_relative_differentials_for_g} Let $g : Y \to \mathbb{P}^1$ be an elliptically fibered $K3$ surface with section $E$. Then
\begin{enumerate}
\item $H^1(Y,T_Y) \to H^1(Y,(T_Y|_E)$ is surjective, and 
\item $H^0(\Omega^1_g\otimes \cO_Y(E))=0$.
\end{enumerate}
\end{Lemma}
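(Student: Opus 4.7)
The plan is to translate both parts into the vanishing of $H^0$ of a sheaf on $Y$ admitting an explicit pushforward to $\mathbb{P}^1$, then exploit the structure of $\Omega^1_g$ near the $24$ singular fibers of the elliptic fibration.

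For (2), I would begin by understanding the natural map $\Omega^1_g \to \omega_g$, where $\omega_g \cong g^*\cO_{\mathbb{P}^1}(2)$ thanks to $\omega_Y \cong \cO_Y$ and adjunction $\omega_Y \cong g^*\omega_{\mathbb{P}^1} \otimes \omega_g$. On the smooth locus of $g$ the two sheaves agree. At a node $p_i$ of a singular fiber, choosing local analytic coordinates $(u,v)$ on $Y$ with $g = uv$, one has $\Omega^1_g \cong \cO_{Y,p_i}^2/(v\,du + u\,dv)$, which via a Koszul identification ($du \mapsto u,\ dv \mapsto -v$) becomes the maximal ideal $\mathfrak{m}_{p_i}$, and the map to $\omega_g$ becomes the inclusion $\mathfrak{m}_{p_i} \hookrightarrow \cO_{Y,p_i}$. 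For a generic elliptic K3 the $24$ nodes lie over $24$ distinct critical values $t_1,\dots,t_{24} \in \mathbb{P}^1$, so globally one obtains
$$0 \to \Omega^1_g \to \omega_g \to \bigoplus_{i=1}^{24} k(p_i) \to 0.$$
Twisting by $\cO_Y(E)$ and pushing forward, the projection formula together with $g_*\cO_Y(E) \cong \cO_{\mathbb{P}^1}$ (which follows from $h^0(\cO_{F_t}(p_t)) = 1$ and $h^1 = 0$ on every fiber and a Euler-characteristic count) gives $g_*\omega_g(E) \cong \cO_{\mathbb{P}^1}(2)$. The induced map $\cO_{\mathbb{P}^1}(2) \to \bigoplus k(t_i)$ is evaluation at $24$ distinct points, hence surjective with kernel $\cO_{\mathbb{P}^1}(2-24) = \cO_{\mathbb{P}^1}(-22)$. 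Combined with $R^1 g_*\cO_Y(E) = 0$ and the Leray spectral sequence, this yields $H^0(Y,\Omega^1_g(E)) = H^0(\mathbb{P}^1,\cO(-22)) = 0$.

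For (1), the short exact sequence $0 \to T_Y(-E) \to T_Y \to T_Y|_E \to 0$ reduces surjectivity of $H^1(Y,T_Y) \to H^1(Y,T_Y|_E)$ to $H^2(Y,T_Y(-E)) = 0$, which by Serre duality with $\omega_Y \cong \cO_Y$ is equivalent to $H^0(Y,\Omega^1_Y(E)) = 0$. Applying the relative cotangent sequence twisted by $\cO_Y(E)$,
$$0 \to \cO_Y(E - 2F) \to \Omega^1_Y(E) \to \Omega^1_g(E) \to 0,$$
and invoking (2), the assertion reduces to $H^0(Y,\cO_Y(E-2F)) = 0$; this is immediate since any effective $D \sim E - 2F$ would satisfy $D \cdot E = -4 < 0$, forcing $E \subseteq D$ and hence $D - E \sim -2F$ effective, which is absurd.

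The main obstacle is the local Koszul analysis at the nodes and the verification that the global exact sequence $0 \to \Omega^1_g \to \omega_g \to \bigoplus k(p_i) \to 0$ really arises from the natural map, together with the base-change hypotheses needed to compute the pushforwards. Once those structural inputs are in place, the rest is a routine combination of the projection formula, Leray, and Serre duality.
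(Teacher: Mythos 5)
Your argument is correct and follows essentially the same route as the paper: the same Serre-duality reduction of (1) to $H^0(Y,\Omega^1_Y(E))=0$, the same exact sequences $0 \to \Omega^1_g \to \omega_g \to \bigoplus_i k(p_i) \to 0$ (with $\omega_g \cong g^*\cO_{\mathbb{P}^1}(2)$ and $g_*\cO_Y(E)\cong\cO_{\mathbb{P}^1}$) and $0 \to g^*\Omega^1_{\mathbb{P}^1}(E) \to \Omega^1_Y(E) \to \Omega^1_g(E) \to 0$, under the same standing assumption of $24$ nodal fibers. The only differences are cosmetic: you push forward and identify $g_*(\Omega^1_g(E))$ with the kernel $\cO_{\mathbb{P}^1}(-22)$ of evaluation at the critical values, where the paper checks directly that a degree-$2$ polynomial cannot vanish at all $24$ nodes, and you rule out sections of $\cO_Y(E-2F)$ by intersection numbers where the paper uses the projection formula.
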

\begin{proof}
By taking the long exact sequence in cohomology for $0\to T_Y(-E) \to T_Y \to T_Y|_E \to 0$, for part $(i)$ we need to check that $H^2(T_Y(-E)) = 0$. Using Serre duality, this is equivalent to $$H^0(\Omega^1_Y(E)) = 0.$$
We have a short exact sequence of differentials
\begin{equation}\label{eqn:omega:1}
0 \to g^*\Omega^1_{\mathbb{P}^1} \to \Omega^1_Y \to \Omega^1_g \to 0.
\end{equation}
Moreover, $\omega_g$ is a line a bundle which agrees with $\Omega^1_g$ on the big open set $U \subset Y$ where $g$ is smooth. Since the singular fibers of $g$ have nodal singularities, $\Omega^1_g$ is torsion free and there is a canonical map $\Omega^1_g \to \omega_g$ identifying $\omega_g$ with the reflexive hull. The cokernel $F$ is a torsion sheaf supported at the nodes of the singular fibers. A local computation around each such node $p \in Y$ shows that $F_p = k$ is rank $1$. Thus we have a short exact sequence 
\begin{equation}\label{equation:omega:2}
0 \to \Omega^1_g \to \omega_g \to F\to 0 
\end{equation}
where $F$ is a direct sum of $k_p$ over the 24 nodes of the 24 singular fibers. 

Now, $\cO_Y \cong \omega_Y \cong \omega_g \otimes g^*(\omega_{\mathbb{P}^1})$ so $\omega_g\cong  g^*(\cO_{\mathbb{P}^1}(2))$. Then $H^0(\omega_g(E)) = H^0(\cO_{\mathbb{P}^1}(2)\otimes g_*\cO(E))$ by projection formula. By \cite[II.3.7]{Miranda} $g_*\cO(E) \cong \cO_{\mathbb{P}^1}$ so $H^0(\cO_{\mathbb{P}^1}(2)\otimes g_*\cO(E))\cong k^{\oplus 3}$. Then twisting the exact sequence (\ref{eqn:omega:1}) with $\cO(E)$ and computing cohomology we have
$$0 \to H^0(\Omega^1_g(E))\to H^0(\omega_g(E)) \to H^0(F(E)).$$
To check that the morphism $H^0(\omega_g(E)) \to H^0(F(E))$ is injective, it suffices to check that for every section $s \in H^0(\omega_g(E))$ there is a nodal point $p$ of a singular fiber where $s$ does not vanish. We identified sections of $\omega_g(E)$ above with degree $2$ polynomials on $\mathbb{P}^1$ which vanish along at most $2$ fibers, proving the claim as there are $24$ singular fibers. Therefore $H^0(\Omega^1_g(E)) = 0$, proving $(ii)$. 

Returning to $(i)$, we twist (\ref{equation:omega:2}) by $E$ and note that $\Omega^1_{\mathbb{P}^1} = \cO_{\mathbb{P}^1}(-2)$ so $H^0(g^*\Omega^1_{\mathbb{P}^1}(E)) = H^0(\cO(-2)) = 0$ using projection formula and \cite[II.3.7]{Miranda} as before. Then  $H^0(\Omega^1_Y(E)) \to H^0(\Omega^1_g(E))$ is injective so $H^0(\Omega^1_Y(E)) = 0$ as required.  
\end{proof}

Putting this all together, we conclude the following:

\begin{Prop}\label{prop:def:X}The deformation space $Def(\cX)$ is smooth and 19-dimensional. \end{Prop}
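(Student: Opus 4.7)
The plan is to reduce computing $H^i(\cX, T_\cX)$ to computing $H^i(X, T_X)$, and then to use the Leray spectral sequence for the crepant resolution $p : Y \to X$ together with the two lemmas just established. Since $\cX$ is smooth (a fact already noted analytically-locally: the $A_1$-singularity of $X$ is replaced by the smooth double cover presenting the stack), deformations of $\cX$ are classified by $H^1(\cX, T_\cX)$ with obstructions in $H^2(\cX, T_\cX)$, so it is enough to show $h^1(\cX, T_\cX) = 19$ and $h^2(\cX, T_\cX) = 0$.

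First, since the coarse moduli map $f : \cX \to X$ is finite and cohomologically affine, the higher direct images $R^i f_* T_\cX$ vanish for $i > 0$, so $H^i(\cX, T_\cX) \cong H^i(X, f_* T_\cX)$. As observed before the statement, $f_* T_\cX$ is reflexive and agrees in codimension one with both $T_X$ and $p_* T_Y$ (the latter reflexive by the extension theorem for differentials on klt varieties, applied to $Y \to X$ using the holomorphic symplectic form on $Y$), hence $f_* T_\cX \cong p_* T_Y$. Thus $H^i(\cX, T_\cX) \cong H^i(X, p_* T_Y)$ and the Leray spectral sequence for $p$ gives the five-term sequence already written down just before the statement.

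Plugging in the Hodge numbers $h^0(Y, T_Y) = h^2(Y, T_Y) = 0$, $h^1(Y, T_Y) = 20$, and Lemma \ref{lemma_R1push_forward_tangent_is_skyscraper} identifying $R^1 p_* T_Y$ with the one-dimensional skyscraper $H^1(E, T_Y|_E)_x$, the edge map in the five-term sequence is precisely the restriction map $H^1(Y, T_Y) \to H^1(E, T_Y|_E)$, which is surjective by part (i) of Lemma \ref{lemma_relative_differentials_for_g}. Therefore $H^2(X, T_X)$ injects into $H^2(Y, T_Y) = 0$ and $H^1(X, T_X)$ is the kernel of a surjection from a 20-dimensional space onto a 1-dimensional space, giving $h^1(X, T_X) = 19$ and $h^2(X, T_X) = 0$.

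Combining the two displays, $H^1(\cX, T_\cX)$ is 19-dimensional and $H^2(\cX, T_\cX) = 0$, so by standard deformation theory for smooth Deligne–Mumford stacks the functor $Def(\cX)$ is unobstructed and its tangent space is 19-dimensional, i.e.\ it is represented by a smooth formal scheme of dimension $19$. The only step that requires real care is the identification of the edge map with the geometric restriction map $H^1(Y, T_Y) \to H^1(E, T_Y|_E)$; this is the main content of the two preparatory lemmas and is where Lemma \ref{lemma_relative_differentials_for_g}(ii), controlling $H^0(\Omega^1_g(E))$ via the elliptic fibration $g : Y \to \mathbb{P}^1$, does the real work.
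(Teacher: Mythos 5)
Your argument is correct and follows the paper's proof essentially verbatim: reduce $H^i(\cX,T_\cX)$ to $H^i(X,p_*T_Y)$ via reflexivity and the cohomologically affine coarse map, then use the Leray five-term sequence for $p:Y\to X$, the skyscraper computation of $R^1p_*T_Y$, and the surjectivity of the restriction $H^1(Y,T_Y)\to H^1(E,T_Y|_E)$ from the elliptic-fibration lemma to conclude $h^1=19$, $h^2=0$. No substantive differences to flag.
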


\begin{proof}
Using the long exact sequence
$$0 \to H^1(X,T_X) \to H^1(Y,T_Y) \to H^0(X,R^1p_*T_Y) \to H^2(X,T_X) \to H^2(Y,T_Y) = 0$$
and surjectivity of $H^1(Y,T_Y) \to H^0(X, R^1p_*T_Y)$ we have $H^1(X,T_X)\cong k^{\oplus 19}$ and $H^2(X,T_X) = 0$ so the deformation space is unobstructed.
\end{proof}

\subsection{Deformations theory of the twisted stable pair $(\cX,\cD)$.}
Following the discussion at the start of this section, we wish to compute $\E^i(\Omega^1_{\cX}(\log\cD),\cO_{\cX}) = H^i(X, T_\cX(-\log \cD))$. Consider the residue exact sequence 
\begin{equation}\label{eqn:log:sequence}
0 \to \Omega^1_\cX \to \Omega^1_\cX(\log \cD) \to \cO_\cD \to 0.
\end{equation}
\begin{Lemma}
$\E^2(\Omega^1_\cX(\log \cD), \cO_\cX) \cong H^0(\cX, \Omega^1_\cX(\log \cD)) = 0$.
\end{Lemma}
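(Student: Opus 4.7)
The plan is to establish the isomorphism by Serre duality and the vanishing via the residue exact sequence \eqref{eqn:log:sequence}.

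For the isomorphism, since $\Omega^1_\cX(\log \cD)$ is locally free on the smooth stack $\cX$ (with $\cD$ Cartier) and $\omega_\cX \cong \cO_\cX$, Serre duality on the smooth proper $2$-dimensional DM stack $\cX$ yields
\[
\E^2(\Omega^1_\cX(\log\cD), \cO_\cX) \;=\; H^2(\cX, T_\cX(-\log \cD)) \;\cong\; H^0(\cX, \Omega^1_\cX(\log \cD))^\vee,
\]
so the two groups are canonically dual and vanish simultaneously.

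For the vanishing, taking the long exact sequence of \eqref{eqn:log:sequence} and using that $\cD$ is connected (so $H^0(\cD, \cO_\cD) = k$) gives
\[
0 \to H^0(\cX, \Omega^1_\cX) \to H^0(\cX, \Omega^1_\cX(\log \cD)) \to k \xrightarrow{\delta} H^1(\cX, \Omega^1_\cX),
\]
with $\delta(1) = c_1(\cO_\cX(\cD))$. It then suffices to prove (a) $H^0(\cX, \Omega^1_\cX) = 0$, and (b) $c_1(\cO_\cX(\cD)) \neq 0$ in $H^1(\cX, \Omega^1_\cX)$.

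For (a), push forward along the coarse space map $f : \cX \to X$. Locally at the stacky point, where $\cX$ is étale equivalent to $[\mathbb{A}^2/\bmu_2]$ with $\bmu_2$ acting by $(x,y) \mapsto (-x,-y)$, the sheaf $f_*\Omega^1_\cX$ corresponds to the $\bmu_2$-invariant module of differentials, which is $S_2$ and agrees with $\Omega^1_X$ on the smooth locus. Hence $f_*\Omega^1_\cX \cong (\Omega^1_X)^{[1]}$. The Greb--Kebekus--Kov\'acs--Peternell extension theorem for klt (in particular canonical) singularities identifies $(\Omega^1_X)^{[1]}$ with $p_*\Omega^1_Y$ for the minimal resolution $p : Y \to X$, yielding
\[
H^0(\cX, \Omega^1_\cX) \;=\; H^0(Y, \Omega^1_Y) \;=\; 0,
\]
since $Y$ is a K3 surface.

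For (b), since $(\cX, \cD)$ is a stable pair with $\omega_\cX \cong \cO_\cX$, the divisor $\cD \sim K_\cX + \cD$ is ample; in particular $c_1(\cO_\cX(\cD))^2 = \cD^2 > 0$, so $c_1(\cO_\cX(\cD))$ is nonzero in $H^2(\cX, \bQ)$, and being the first Chern class of a line bundle it lies in the $(1,1)$-component of the Hodge decomposition. The main obstacle will be making the Hodge decomposition on the DM stack $\cX$ precise; this can be handled either by invoking Hodge theory for smooth proper DM stacks in characteristic zero, or by transferring the computation to $Y$ via a Leray argument analogous to that of Section \ref{subsection_def_obs_for_XX} and using that the pullback of $\cD$ to $Y$ is a non-torsion divisor class on the K3 surface $Y$.
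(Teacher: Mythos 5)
Your proposal takes essentially the same route as the paper: Serre duality (using $\omega_\cX \cong \cO_\cX$) for the first identification, then the residue sequence with the connecting map $H^0(\cD,\cO_\cD) \to H^1(\cX,\Omega^1_\cX)$ sending $1 \mapsto c_1(\cO_\cX(\cD))$, nonvanishing of this class by ampleness, and $H^0(\cX,\Omega^1_\cX) = H^0(X, p_*\Omega^1_Y) = H^0(Y,\Omega^1_Y) = 0$ via the coarse space and the resolution. The only point you flag as an obstacle, that $c_1(\cO_\cX(\cD)) \neq 0$ in $H^1(\cX,\Omega^1_\cX)$, is asserted in the paper exactly as you state it (ampleness plus properness), and can be closed algebraically by noting that $c_1(\cO_\cX(\cD)) \cup c_1(\cO_\cX(\cD))$ maps to $\cD^2 > 0$ under the trace isomorphism $H^2(\cX,\omega_\cX) \cong k$, so no Hodge theory for stacks is needed.
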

\begin{proof} The first equality is Serre duality. Consider the connecting homomorphism $k = H^0(\cX, \cO_\cD) \to H^1(\cX, \Omega^1_\cX)$. One can check that this map sends $1$ to $c_1(\cL)$ where $\cL = \cO_\cX(\cD)$. Since $\cD$ is ample and $\cX$ is proper, $c_1(\cL)\neq 0$ so the connecting homomorphism is injective. Therefore $H^0(\cX, \Omega^1_\cX) \to H^0(\cX, \Omega^1_\cX(\log \cD))$ is an isomorphism but $H^0(\cX, \Omega^1_\cX) = H^0(X, f_*\Omega^1_\cX) = H^0(X, p_*\Omega^1_Y) = H^0(Y, \Omega^1_Y) = 0$. 
\end{proof}

\begin{Lemma}\label{lemma_ext1_OD}
$$
\E^i(\cO_D,\cO_{\cX}) \cong \left\{\begin{array}{lr} 0 & i \neq 1,2 \\ k & i = 1,2\end{array}\right.
$$ 
\end{Lemma}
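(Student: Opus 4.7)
The plan is to compute $R\sHom(\cO_\cD,\cO_\cX)$ sheaf-theoretically and then pass to global sections. Since $\cD$ is a Cartier divisor on the smooth DM stack $\cX$, the Koszul sequence
\[
0 \to \cO_\cX(-\cD) \to \cO_\cX \to \cO_\cD \to 0
\]
is a locally free resolution of $\cO_\cD$. Applying $\sHom(-,\cO_\cX)$ and taking cohomology of the resulting two-term complex $\cO_\cX \to \cO_\cX(\cD)$ gives a quasi-isomorphism
\[
R\sHom(\cO_\cD,\cO_\cX) \;\simeq\; \cN_{\cD/\cX}[-1],
\]
where $\cN_{\cD/\cX} = \cO_\cX(\cD)\big|_\cD$ is the normal bundle. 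In particular the local-to-global spectral sequence collapses and
\[
\E^i(\cO_\cD,\cO_\cX) \;\cong\; H^{i-1}(\cD,\cN_{\cD/\cX}),
\]
so $\E^i=0$ for $i\notin\{1,2\}$ is automatic once we check the right-hand side vanishes outside $\{0,1\}$, which is immediate because $\cD$ is a $1$-dimensional proper DM stack.

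Next I identify $\cN_{\cD/\cX}$. We have $\omega_\cX\cong\cO_\cX$ (since $p:Y\to X$ is crepant and $Y$ is K3, and $\cX$ agrees with $X$ off the $A_1$ point, where the $\mu_2$-action fixes $dx\wedge dy$). The polarizing line bundle of the twisted stable pair is $\cL = \omega_\cX\otimes \cO_\cX(\cD) = \cO_\cX(\cD)$. By adjunction for twisted stable pairs (Proposition \ref{Prop:no:different:on:stack}) we then get
\[
\cN_{\cD/\cX} \;=\; \cO_\cX(\cD)\big|_\cD \;=\; \cL\big|_\cD \;\cong\; \omega_\cD.
\]

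Finally I compute $H^*(\cD,\omega_\cD)$ via Serre duality on the orbifold curve $\cD$: $H^j(\cD,\omega_\cD)\cong H^{1-j}(\cD,\cO_\cD)^\vee$. Since $\cD$ is a tame DM stack, $\pi_*\cO_\cD = \cO_D$ and $R^i\pi_*\cO_\cD = 0$ for $i>0$, so $H^*(\cD,\cO_\cD)\cong H^*(D,\cO_D)$. The coarse space $D$ is the smooth fiber contracted to pass through the singular point, hence a smooth elliptic curve, so $H^0(D,\cO_D)=H^1(D,\cO_D)=k$. Combining, $H^0(\cD,\omega_\cD)\cong k$ and $H^1(\cD,\omega_\cD)\cong k$, which via the isomorphism $\E^i(\cO_\cD,\cO_\cX)\cong H^{i-1}(\cD,\omega_\cD)$ yields the claimed values.

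The only subtle point is verifying $\omega_\cX\cong\cO_\cX$ globally (not just up to a line bundle pulled back from the coarse space) and confirming $D$ is an elliptic curve rather than something else; both follow from the explicit description of $\cX$ as the quotient of a smooth crepant resolution and the setup of the example. Everything else is formal: the Koszul calculation, adjunction on the twisted pair, and Serre duality on a tame orbifold curve.
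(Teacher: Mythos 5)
Your proof is correct, and it takes a genuinely different route from the paper's, even though both start from the same short exact sequence $0\to\cO_\cX(-\cD)\to\cO_\cX\to\cO_\cD\to 0$. The paper stays global on the surface: it applies $R\Hom(-,\cO_\cX)$ to this sequence, feeds in $H^i(\cX,\cO_\cX)\cong H^i(Y,\cO_Y)$ (cohomological affineness of the coarse map plus rationality of the $A_1$ point), converts $H^i(\cX,\cO_\cX(\cD))$ into $H^{2-i}(\cX,\cO_\cX(-\cD))$ by Serre duality on $\cX$ (using $\omega_\cX\cong\cO_\cX$), and then computes those groups from the same long exact sequence, using that $\cD$ has genus $1$. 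You instead dualize at the sheaf level, obtaining $R\cH om(\cO_\cD,\cO_\cX)\simeq \cN_{\cD/\cX}[-1]$, identify $\cN_{\cD/\cX}=\cL\big|_{\cD}\cong\omega_\cD$ from the triviality of $\omega_\cX$ together with Proposition \ref{Prop:no:different:on:stack} (this is the same Grothendieck duality mechanism that proposition is built on), and finish with Serre duality on the proper orbifold curve $\cD$ together with $H^*(\cD,\cO_\cD)\cong H^*(D,\cO_D)$ for the elliptic coarse curve $D$. Your route is more structural: it exhibits $\E^1\cong H^0(\cD,\omega_\cD)$ and $\E^2\cong H^1(\cD,\omega_\cD)$ canonically, needs no cohomology of line bundles on the surface, and makes the vanishing for $i\neq 1,2$ transparent; the price is invoking duality on the stacky curve, whereas the paper only uses duality on $\cX$ and recycles surface-level computations already made in that section. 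The two inputs you flag as the only subtle points, namely $\omega_\cX\cong\cO_\cX$ and the fact that $D$ is a smooth elliptic curve (the image of the fiber $F$ through the $A_1$ point), are both part of the section's setup, so nothing is missing, and your final values agree with the paper's.
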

\begin{proof}
We apply the functor $RHom(-,\cO_{\cX})$ to the sequence $0\to \cO_{\cX}(-\cD) \to \cO_{\cX} \to \cO_D \to 0$ using the fact that $\E^i(\cL,\cO_{\cX}) = H^i(\cX,\cL^{-1})$ for line bundles $\cL$. Moreover, since $f:\cX \to X$ is cohomologically affine and $X$ has rational singularities, $R^if_*\cO_{\cX} = R^ip_*\cO_{Y} = 0$ for $i>0$ so $H^i(\cX,\cO_\cX) = H^i(Y,\cO_Y)$. We conclude that $H^0(\cX,\cO_\cX)=H^2(\cX,\cO_\cX)=k$, and $H^1(\cX,\cO_\cX)=0$. Putting this together, we have the following short exact sequences. 
\begin{equation}\label{eqn:les:1}
0 \to k \to H^0(\cX,\cO_{\cX}(\cD))\to \E^1(\cO_{\cD},\cO)
\to 0
\end{equation}

\begin{equation}\label{eqn:les:2}
0 \to H^1(\cX,\cO_{\cX}(\cD)) \to \E^2(\cO_{\cD},\cO)\to k \to 0
\end{equation}
By Serre duality, $H^0(\cX, \cO_\cX(\cD)) = H^2(\cX, \cO_\cX(-\cD))$ and $H^1(\cX, \cO_\cX(\cD)) = H^1(\cX, \cO_\cX(-\cD))$. Taking the long exact sequence of cohomology of $0\to \cO_{\cX}(-\cD) \to \cO_{\cX} \to \cO_{\cD} \to 0$ we have that $H^1(\cX,\cO_{\cX}(-\cD)) = 0$ and $H^2(\cX,\cO_{\cX}(-\cD)) = k^{\oplus 2}$. Here we have used that $H^1(\cX, \cO_\cX) = 0$ and $H^1(\cD, \cO_\cD) = 1$ since $\cD$ is genus $1$. Applying this to (\ref{eqn:les:1}) and $(\ref{eqn:les:2})$ finishes the proof. 
\end{proof}

Putting this all together, we conclude the following.

\begin{Prop}\label{prop:def:tsp}
The deformation space of the twisted stable pair $Def(\cX, \cD)$ is smooth and $19$-dimensional. 
\end{Prop}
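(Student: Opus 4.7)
The plan is to assemble the pieces already computed into a single long exact sequence and read off the dimensions of the tangent and obstruction spaces for $Def(\cX,\cD)$. Recall that by the discussion opening the section, $Def(\cX,\cD) = Def(q: \cX \to \Theta)$ is controlled by the cotangent complex $\bL_{\cX/\Theta}$; since $q$ is flat and $\cX$ is smooth, this complex is concentrated in degree $0$ and equals $\Omega^1_\cX(\log \cD)$. Thus the tangent space to $Def(\cX,\cD)$ is $\E^1(\Omega^1_\cX(\log\cD),\cO_\cX)$ and obstructions live in $\E^2(\Omega^1_\cX(\log\cD),\cO_\cX)$.

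The plan is to apply $R\Hom(-,\cO_\cX)$ to the residue sequence \eqref{eqn:log:sequence}, producing
\begin{align*}
0 \to \E^0(\cO_\cD,\cO_\cX) &\to \E^1(\cO_\cD,\cO_\cX) \to \E^1(\Omega^1_\cX(\log\cD),\cO_\cX) \\
&\to \E^1(\Omega^1_\cX,\cO_\cX) \to \E^2(\cO_\cD,\cO_\cX) \to \E^2(\Omega^1_\cX(\log\cD),\cO_\cX) \to \E^2(\Omega^1_\cX,\cO_\cX).
\end{align*}
I would then plug in the already-established values: $\E^i(\Omega^1_\cX,\cO_\cX) = H^i(\cX,T_\cX)$, which by Proposition \ref{prop:def:X} is $0$ for $i = 0, 2$ and $k^{\oplus 19}$ for $i = 1$; the values $\E^i(\cO_\cD,\cO_\cX) = k$ for $i = 1, 2$ and zero otherwise from Lemma \ref{lemma_ext1_OD}; and the vanishing $\E^2(\Omega^1_\cX(\log\cD),\cO_\cX) = 0$ from the lemma immediately preceding the proposition.

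Given these inputs, the sequence becomes
\[
0 \to k \to \E^1(\Omega^1_\cX(\log\cD),\cO_\cX) \to k^{\oplus 19} \to k \to 0 \to 0,
\]
from which I read off $\dim \E^1(\Omega^1_\cX(\log\cD),\cO_\cX) = 1 + 19 - 1 = 19$. Combined with the vanishing of the obstruction space, this yields that $Def(\cX,\cD)$ is smooth of dimension $19$.

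This is essentially a bookkeeping argument; all nontrivial inputs have already been verified. The only minor subtlety is checking that the boundary map $k^{\oplus 19} = \E^1(\Omega^1_\cX,\cO_\cX) \to \E^2(\cO_\cD,\cO_\cX) = k$ is indeed surjective, but this follows automatically from the vanishing $\E^2(\Omega^1_\cX(\log\cD),\cO_\cX) = 0$ and the exactness of the sequence, so no additional computation is required.
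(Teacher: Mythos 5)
Your proposal is correct and follows essentially the same route as the paper: apply $R\Hom(-,\cO_\cX)$ to the residue sequence \eqref{eqn:log:sequence}, feed in Lemma \ref{lemma_ext1_OD}, the vanishing $\E^2(\Omega^1_\cX(\log\cD),\cO_\cX)=0$, and the computation $H^1(\cX,T_\cX)=k^{\oplus 19}$, $H^0=H^2=0$, to get the four-term sequence $0\to k\to \E^1(\Omega^1_\cX(\log\cD),\cO_\cX)\to k^{\oplus 19}\to k\to 0$ and unobstructedness. The only cosmetic issue is that your displayed long exact sequence skips the terms $\E^0(\Omega^1_\cX(\log\cD),\cO_\cX)$ and $\E^0(\Omega^1_\cX,\cO_\cX)$ between $\E^0(\cO_\cD,\cO_\cX)$ and $\E^1(\cO_\cD,\cO_\cX)$; this is harmless because both vanish (they inject into, respectively equal, $H^0(\cX,T_\cX)=0$), which is exactly the vanishing the paper records at the start of its proof to get injectivity of $\E^1(\cO_\cD,\cO_\cX)\to\E^1(\Omega^1_\cX(\log\cD),\cO_\cX)$.
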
 
\begin{proof}
Note that $\Hom(\Omega^1_\cX(\log \cD), \cO_\cX) = H^0(\cX,T_\cX(-\log \cD))$ vanishes as it injects into $H^0(\cX, T_\cX) = 0$. Using this and applying $RHom(-,\cO_\cX)$ to the short exact sequence (\ref{eqn:log:sequence}), we obtain 
\begin{align*}
0 \to \E^1(\cO_\cD, \cO_\cX) \to H^1(\cX, T_\cX(-\log \cD)) &\to H^1(\cX,T_\cX) \to \\ \E^1(\cO_\cD, \cO_\cX) &\to H^2(\cX, T_\cX(-\log \cD)) \to 0.
\end{align*}
Applying the previous results, we obtain
$$
0 \to k \to H^1(\cX, T_\cX(-\log \cD)) \to k^{\oplus 19} \to k \to 0
$$
and $H^2(\cX, T_\cX(-\log \cD) = 0$ so we conclude that $H^1(\cX, T_\cX(-\log \cD)$ is $19$-dimensional and $Def(\cX,\cD) = Def(\cX \to \Theta)$ is unobstructed as claimed.
\end{proof}

\bibliographystyle{amsalpha}
\bibliography{tsp}

\end{document}